\newtheorem{thm}{Theorem}[section]
\newtheorem{prop}[thm]{Proposition}
\newtheorem{cor}[thm]{Corollary}
\theoremstyle{definition}
\newtheorem{ex}[thm]{Example}
\newtheorem{rem}[thm]{Remark}
\newtheorem{conj}[thm]{Conjecture}
\numberwithin{equation}{section}
\numberwithin{figure}{section}
\newcommand{\Z}{\mathbb{Z}}
\newcommand{\R}{\mathbb{R}}
\newcommand{\xrk}{X_{r}(K)}
\newcommand{\rp}{\R \text{P}^{2}}
\DeclareMathOperator{\arf}{Arf}
\newcommand{\whd}{\text{Wh}^{+}_{t}(K)}
\newcommand{\pmwhd}{\text{Wh}^{\pm}_{t}(K)}
\titleformat{\section} {\normalfont\scshape\bfseries\filcenter}{\thesection }{1em}{}
\titleformat{\subsection} {\normalfont\scshape\bfseries\filcenter}{\thesubsection}{1em}{}
\begin{document}

\title[{Whitehead DOUBLES AND NON-ORIENTABLE SURFACES}]{Whitehead DOUBLES AND NON-ORIENTABLE SURFACES}

\author[MEGAN FAIRCHILD]{MEGAN FAIRCHILD}
\address{Department of Mathematics \\ Louisiana State University}
\email{mfarr17@lsu.edu}

\maketitle

%%%%%%%%%%%%%%%%%%%%%%%%%%%%%%%%%%%%%%%%%%%%%%%%%%%
%%%%%%%%%%%%%   INTRODUCTION   %%%%%%%%%%%%%%%%%%%%
%%%%%%%%%%%%%%%%%%%%%%%%%%%%%%%%%%%%%%%%%%%%%%%%%%%

\textbf{ABSTRACT.} Whitehead doubles provide a plethora of examples of knots that are topologically slice but not smoothly slice. We discuss the problem of the Whitehead double of the Figure 8 knot and survey commonly used techniques to obstructing sliceness. Additionally, we improve bounds in general for the non-orientable 4 genus of $t$-twisted Whitehead doubles and provide genus 1 non-orientable cobordisms to cable knots.

\section{INTRODUCTION}

Determining whether or not a knot is slice has been a problem of interest for a century now. The knots constructed from the Whitehead double operation present an intriguing phenomenon of topologically slice knots that are not smoothly slice, implying the existence of exotic structures in the fourth dimension. A knot of increasing interest is the positive untwisted Whitehead double of the figure 8 knot, which is topologically slice and believed to not be smoothly slice, yet a concrete proof has not been presented. One is not presented in this paper either, however a thorough review of all that has been attempted thus far as well as predictions and further directions will be discussed. We will also provide specific upper bounds on the non-orientable 4 genus of Whitehead doubles with the hopes that this may enlighten the effort to prove the positive untwisted Whitehead double of the figure 8 knot is not smoothly slice. 

A knot $K$ in $S^{3}$ is called \textit{smoothly slice} if it bounds a smoothly embedded disk in $B^{4}$, or its 4 genus is zero, $g_{4}(K) = 0$. For the topological category, a if a knot $K$ bounds a disk which has a topologically locally flat embedding in $B^{4}$, we call that knot \textit{topologically slice} and the notation is $g_{4}^{top}(K) =0$. We define non-orientable 4 genus in the spirit of Murakami and Yasuhara \cite{MY} as the minimum first Betti number amongst all non-orientable surfaces bounded by $K$, $\gamma_{4}(K) = \min \{ b_{1}(F) \hspace{1mm} | \hspace{1mm} F \text{ non-orientable, } \partial(F) = K, F \subset B^{4} \}$. We introduce the property \textit{non-orientable sliceness} as a knot bounding a M\"obius band, $\gamma_{4}(K) = 1$.

\begin{thm}\label{thm:NO4Gnotpreserved}
    The $t$-twisted Whitehead double operation does not preserve non-orientable sliceness. 
\end{thm}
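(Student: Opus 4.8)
The plan is to produce an explicit companion $K$ that bounds a M\"obius band in $B^{4}$, so that $\gamma_{4}(K)=1$, together with a twisting parameter $t$ (and a choice of clasp sign) for which the $t$-twisted Whitehead double of $K$ has $\gamma_{4}\geq 2$; this exhibits a non-orientably slice knot whose $t$-twisted double is not non-orientably slice. I would choose $K$ so that the lower end of the inequality is free: a $(2,n)$-torus knot already bounds a M\"obius band in $S^{3}$ (the obvious $n$-half-twisted band), hence $\gamma_{4}(T(2,n))=1$ with no argument, and such torus knots are precisely the companions that occur in the genus-$1$ non-orientable cobordisms to cable knots constructed later in the paper. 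All of the work is then in establishing $\gamma_{4}(\whd)\geq 2$.

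For that lower bound I see two natural routes. \emph{Cobordism route:} use the genus-$1$ non-orientable cobordism from the $t$-twisted Whitehead double of $K$ to a cable knot $C$, together with the observation that a non-orientable cobordism of first Betti number $h$ between two knots can change $\gamma_{4}$ by at most $h$; this yields $\gamma_{4}(\whd)\geq\gamma_{4}(C)-1$, so it suffices to arrange the data so that $C$ is an iterated torus knot with $\gamma_{4}(C)\geq 3$. The required lower bound on $\gamma_{4}(C)$ would be supplied either by Batson's theorem that the non-orientable $4$-genus of the $(2k,2k-1)$-torus knots is unbounded, or by the Ozsv\'ath--Szab\'o--Stipsicz $\Upsilon$-bound applied to iterated torus knots. \emph{Heegaard Floer route:} apply Batson's $d$-invariant obstruction directly to $\whd$; since the Whitehead double is a satellite, the double branched cover $\Sigma_{2}(\whd)$ has an explicit Dehn-surgery presentation, so one computes $d\bigl(\Sigma_{2}(\whd)\bigr)$ in the relevant spin$^{c}$ structure and checks that it is incompatible with bounding a definite form of rank one, which is what a M\"obius band in $B^{4}$ would force. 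Hedden's formula, that $\tau$ of the positive-clasped $t$-twisted Whitehead double of $K$ equals $1$ whenever $t<2\tau(K)$, can be used to identify the correct spin$^{c}$ structure and to verify that the double is not smoothly slice.

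The main obstacle is exactly this lower bound. Classical invariants are powerless here: the signature of a $t$-twisted Whitehead double depends only on $t$ and the clasp sign and equals $0$ or $-2$, so it can never force $\gamma_{4}\geq 2$, and the Arf invariant contributes nothing; a Floer-theoretic input is therefore unavoidable, and such computations for satellites are delicate. In the cobordism route the difficulty is twofold: pinning down precisely which cable $C$ the construction produces, and verifying $\gamma_{4}(C)\geq 3$, which requires the strongest available lower bounds for the non-orientable $4$-genus of iterated torus knots. In the Heegaard Floer route the difficulty is the explicit evaluation of $d\bigl(\Sigma_{2}(\whd)\bigr)$ for the correct spin$^{c}$ structure. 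Once $\gamma_{4}(\whd)\geq 2$ is established for a single pair $(K,t)$, combining it with $\gamma_{4}(K)=1$ gives the theorem immediately.
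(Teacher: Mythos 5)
There is a genuine gap: you never actually establish the lower bound $\gamma_{4}\geq 2$ for any specific double, and the one thing you assert with confidence --- that ``classical invariants are powerless here'' and ``the Arf invariant contributes nothing'' --- is exactly backwards. The paper's proof is entirely classical. It computes the Seifert matrix of $K(1,t,1)=\text{Wh}^{+}_{t}(U)$, the $t$-twisted double of the \emph{unknot} (no torus-knot companion is needed), finds $\sigma=0$ for all $t$ and $\arf=1$ precisely when $t$ is odd, and then invokes Yasuhara's obstruction (Proposition~\ref{sigArf}): if $\sigma(K)+4\arf(K)\equiv 4 \pmod 8$ then $\gamma_{4}(K)\geq 2$. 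With $\sigma=0$ and $\arf=1$ the congruence holds, so $\gamma_{4}(\text{Wh}^{+}_{t}(U))\geq 2$ for odd $t$, while the unknot itself bounds a M\"obius band, $\gamma_{4}(U)=1$. The point you missed is that the signature alone is indeed useless, but Yasuhara's mod-$8$ combination of signature \emph{and} Arf is exactly strong enough to rule out a M\"obius band; no Floer-theoretic input is required.

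Both of your proposed substitutes are also problematic beyond being unexecuted. In the cobordism route, the cable $C$ produced by the genus-$1$ non-orientable band move on the clasp is a $(2,q)$-cable of the companion, and every such cable (for $q$ odd) bounds a M\"obius band inside the solid-torus neighborhood of the companion, so $\gamma_{4}(C)=1$; the inequality $\gamma_{4}(\whd)\geq \gamma_{4}(C)-1$ therefore yields nothing, and there is no freedom to ``arrange the data so that $C$ is an iterated torus knot with $\gamma_{4}(C)\geq 3$'' within this construction. The $d$-invariant route is left entirely speculative, with the hard computation deferred. So the proposal correctly identifies the shape of the argument (exhibit one non-orientably slice companion whose $t$-twisted double has $\gamma_{4}\geq 2$) but supplies no proof of the essential inequality and explicitly rules out the elementary method that actually closes it.
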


We introduce Theorem \ref{thm:NO4Gnotpreserved} as it provides an interesting non-orientable analog to the following open conjecture. 

\begin{conj}[Problem 1.38 in \cite{Kirby1995ProblemsIL}]
    The Whitehead double of a knot $K$ is slice if and only if $K$ is slice. 
\end{conj}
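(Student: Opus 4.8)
The plan is to treat the two implications separately, since they are of completely different character. Throughout I take $\mathrm{Wh}(K)$ to be the untwisted positive Whitehead double, which bounds a once-punctured torus $F \subset S^3$; fix a basis $\{\alpha, \beta\}$ of $H_1(F)$ in which $\alpha$ is the core of the band following the companion (a Seifert-framed pushoff of $K$) and $\beta$ is the small clasp curve, an unknot in $S^3$. On this basis the Seifert form is $\begin{pmatrix} 0 & 1 \\ 0 & -1 \end{pmatrix}$, so in particular the surface framing of $\alpha$ is $0$ and $\arf(\mathrm{Wh}(K)) = 0$. For the forward implication ($K$ slice implies $\mathrm{Wh}(K)$ slice) I would push $\mathrm{int}(F)$ into $B^4$, keeping $\partial F = \mathrm{Wh}(K) \subset S^3 = \partial B^4$. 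Given a smooth slice disk $D$ for $K$, the curve $\alpha$ bounds a parallel slice disk $D'$ whose normal framing matches the surface framing of $\alpha$, precisely because that framing is $0$. Ambient surgery on $F$ along two parallel copies of $D'$ then kills the genus without introducing self-intersections, producing a smooth disk bounded by $\mathrm{Wh}(K)$. This is the routine half; note that the vanishing of the $\alpha$-framing is exactly where \emph{zero} twisting is used, which is why the conjecture is stated for the untwisted double.

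The reverse implication ($\mathrm{Wh}(K)$ slice implies $K$ slice) is the genuine content, and it is open. The natural strategy is to exhibit a smooth-concordance obstruction $\phi$ obeying a satellite formula that forces $\phi(\mathrm{Wh}(K)) \neq 0$ whenever $K$ is not slice. The fundamental difficulty is that the Whitehead pattern has winding number $0$: consequently $\mathrm{Wh}(K)$ has trivial Alexander polynomial and the same abelian-cover data as the unknot, so every classical obstruction --- the Seifert form, the signatures, and $\arf$ --- vanishes identically and cannot detect $K$. Any proof must therefore extract information from metabelian or gauge-theoretic invariants. By Hedden's satellite computation of $\tau$ one has $\tau(\mathrm{Wh}(K)) = 1$ whenever $\tau(K) > 0$, so in that regime $\mathrm{Wh}(K)$ is not slice, confirming the contrapositive there; similar Heegaard Floer and Casson--Gordon inputs handle further families of companions with nonvanishing first-order invariants.

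The hard part --- and the reason the conjecture is still open --- is the regime of companions whose first-order concordance invariants all vanish, with the figure-eight knot $4_1$ as the decisive test case: it is non-slice (indeed $\arf(4_1) = 1$) yet satisfies $\tau = s = \sigma = 0$, and correspondingly every presently known invariant of $\mathrm{Wh}(4_1)$ is zero. I expect this to be the main obstacle. Completing the reverse direction here seems to require either a genuinely new smooth-concordance invariant sensitive to the clasp-level information that doubling retains about $K$ (rather than the winding-number-level information, which is trivial), or a direct geometric companion-extraction argument building a slice disk for $K$ out of one for $\mathrm{Wh}(K)$. The latter is obstructed precisely by the winding number being $0$: there is no homological bridge inside $\nu(K) \subset S^3$ connecting a slice disk for the pattern back to the core $K$, so defeating the figure-eight case appears to demand input beyond the techniques surveyed in this paper.
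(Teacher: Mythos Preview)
The statement is presented in the paper as an \emph{open conjecture} (Problem~1.38 in Kirby's list), not as a theorem; the paper offers no proof and in fact devotes much of its survey to explaining why the reverse implication remains unresolved, with $\mathrm{Wh}(4_1)$ as the central outstanding case. Your proposal is therefore not to be compared against a proof in the paper, because there is none.

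That said, your write-up is an accurate and well-organized account of the state of affairs, and it aligns with the paper's discussion. The forward implication you give is standard and correct: the untwisted double bounds a genus-one Seifert surface in which one band core is a $0$-framed parallel of $K$, so a slice disk for $K$ allows ambient surgery to a disk for $\mathrm{Wh}(K)$. Your analysis of the reverse implication --- that winding number zero kills all abelian invariants, that Hedden's $\tau$ formula handles companions with $\tau(K)>0$, and that $4_1$ is the critical test case where all known invariants vanish --- is exactly the picture the paper paints in Sections~2.2--2.4 and the surrounding remarks. One minor point: your Seifert matrix differs from the paper's by a basis reordering (the paper has the clasp curve first with self-linking $-1$ and the companion-following curve second with self-linking $t$), but the substantive claim you need, that the companion-following curve has surface framing $0$ when $t=0$, is correct in either convention.

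In short: there is no gap to name, because you have correctly identified that the hard direction is open and have not claimed otherwise.
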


Let $K$ be a knot and denote $\pmwhd$ as the positive ($+$) or negative ($-$) $t$-twisted Whitehead double of $K$. Specific details of the construction of Whitehead doubles are provided in Section \ref{section:background}.

\begin{thm}\label{prop:no4gWhd}
    $\gamma_{4} ( \pmwhd) \leq 2$.
\end{thm}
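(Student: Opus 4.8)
The plan is to find a genus-one non-orientable cobordism from $\pmwhd$ to a $(2,q)$-cable of $K$ with $q$ odd, and then to cap that cable off by its Möbius band; this is a non-orientable analogue of the classical fact that a twisted Whitehead double bounds a genus-one Seifert surface. Two ingredients make this work. First, a single band move taking one knot to another is necessarily \emph{incoherent}, so its trace in $S^{3}\times[0,1]$ is a connected non-orientable surface $W$ with two boundary circles and $\chi(W)=-1$ (topologically a twice-punctured $\rp$). Second, for $q$ odd the cable $C_{2,q}(K)$ bounds a Möbius band $M\subset S^{3}$ whose core is an isotopic copy of $K$ (take a non-orientable band neighborhood of $K$ with $q$ half-twists), so $\chi(M)=0$. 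Granting a band move realizing the first ingredient, I would place $W$ in a collar $S^{3}\times[0,1]\subset B^{4}$ with $\pmwhd\subset S^{3}\times\{0\}=\partial B^{4}$ and $C_{2,q}(K)\subset S^{3}\times\{1\}$, push $M$ off $S^{3}\times\{1\}$ into the remaining ball, and set $\Sigma=W\cup_{C_{2,q}(K)}M$. Then $\Sigma$ is a connected non-orientable surface properly embedded in $B^{4}$ with $\partial\Sigma=\pmwhd$ and $\chi(\Sigma)=\chi(W)+\chi(M)=-1$, so $b_{1}(\Sigma)=1-\chi(\Sigma)=2$, giving $\gamma_{4}(\pmwhd)\le 2$.

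It remains to produce the band move. Working in the standard diagram of $\pmwhd$ from Section \ref{section:background} — two strands running alongside $K$ with $t$ full twists inserted and a positive (resp.\ negative) clasp — I would attach a band near the clasp that merges the two strands into a single curve wrapping twice longitudinally around $K$; the output is then a cable $C_{2,q}(K)$, with the $2t$ half-twists of the twist region together with the clasp crossings absorbed into $q$. Since the output is a knot, $q$ is odd, so $M$ exists; and one reads off directly from the picture that the band is incoherent, so its trace is the surface $W$ above. The same picture should handle both clasp signs (after mirroring) and all $t$ uniformly.

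The main obstacle is precisely this band move: checking from the diagram that (a) the reconnected curve is genuinely the $(2,q)$-cable of $K$ rather than a more complicated satellite or a connected sum, (b) $q$ is odd for every $t$ and both clasp signs, and (c) the cobordism $W$ is non-orientable with $\chi(W)=-1$. It is the sharpness in (c) that yields the bound $2$; the cruder route of resolving a single double point of an immersed spanning disk by a crosscap, or of combining $g_{4}(\pmwhd)\le 1$ with the general inequality $\gamma_{4}\le 2g_{4}+1$, only gives $3$. Once the band is exhibited, the Euler-characteristic count in the first paragraph is the whole argument.
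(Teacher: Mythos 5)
Your proposal is correct and follows essentially the same route as the paper: a single non-orientable band move at the clasp turns $\pmwhd$ into a $(2,q)$-cable of $K$, which is then capped off by the M\"obius band it bounds; the paper packages your Euler-characteristic count as Proposition 2.4 of Jabuka--Kelly ($\gamma_{4}(K)\leq\gamma_{4}(J)+1$ for knots related by a non-oriented band move). The verification you flag as the main obstacle --- that the reconnected curve really is $K_{2,q}$ --- is handled in the paper by an explicit diagram (Figure \ref{fig:whdto21cable}).
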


Note that this is a slight improvement from the existing bound given from the orientable 4-genus. We additionally show that non-orientable sliceness depends on the twisting parameter $t$. 

\newtheorem*{thm_1}{Theorem~\ref{thm:whdalternating4genus}}
\begin{thm_1} For any knot $K$, denote $\lambda$ as the Seifert framing, then
\begin{enumerate}[(i)]
    \item $\gamma_{4}(\text{Wh}^{+}_{t}(K) )= 2$ for $t>0$ when $t + \lambda $ is odd.
    \item $\gamma_{4}(\text{Wh}^{-}_{t}(K)) = 2$ for $t<0$ when $t + \lambda $ is odd.
\end{enumerate}
\end{thm_1}

This paper is structured as follows, section \ref{section:background} is an overview of slice knot obstructions and discusses the background of the slice problem, including obstructions to a knot being slice using invariants coming from knot Floer homology and 4-manifold theory. Section \ref{section:NOcobords} outlines the background for non-orientable 4 genus of knots, non-orientable 4 genus bounds for Whitehead doubles, and non-orientable cobordisms from Whitehead doubles to cable knots.

\textbf{Acknowledgments.} The author was partially supported by NSF Grant No. DMS-1907654 and NSF RTG Grant No. DMS-2231492. We thank Shelly Harvey for incredible feedback, and Josh Sabloff for pointing out a small computational error in the first draft of this paper.  

\section{THE SLICE PROBLEM}\label{section:background}

Artin asked a question about knots being represented as \textit{slices} of spheres in the 1920s, and this was the beginning of the study of knot concordance and 4 genus of knots. It seems to be a simpler task to show a knot is \textit{not} slice rather than to prove that it is, thus the use of obstructions is a popular method. There are many obstructions to a knot being slice that come from knot invariants. We will review classical knot invariants, Heegaard Floer invariants, and some 4-manifold theory. We begin by defining satellite knots and Whitehead doubles. For this paper, all knots $K$ are assumed to be in $S^{3}$ and all embeddings are assumed to be smooth, unless otherwise stated. The notational conventions used for knots $K$ in this paper come from the Rolfsen knot table, where $K$ is denoted $C_{n}$ where $C$ is the minimum number of crossings of any diagram of $K$, and $n$ is the index of the knot in the list of knots, per Rolfsen's knot table \cite{rolfsen}. 

\subsection{Whitehead Double}

A satellite knot is constructed with a \textit{pattern knot} $P$ in $S^{1} \times D^{2}$, and a \textit{companion knot} $K$ in $S^{3}$. See Figure \ref{fig:whdTpattern} for the $t$-twisted Whitehead double pattern. We construct the satellite knot $P(K)$ by removing a tubular neighborhood of $K$, denoted $n(K)$, and gluing in the solid torus containing $P$ where the longitude of $S^{1} \times D^{2}$ is identified with the Seifert framing of $K$. Note that it is common in literature for the neighborhood of a knot to be denoted $\nu(K)$, but as we will be discussing the $\nu$ invariant from Heegaard Floer homology in a later section, we use the notation $n(K)$ to avoid confusion. 

\begin{figure}[h]
    \centering
    \includegraphics[width=1.8in]{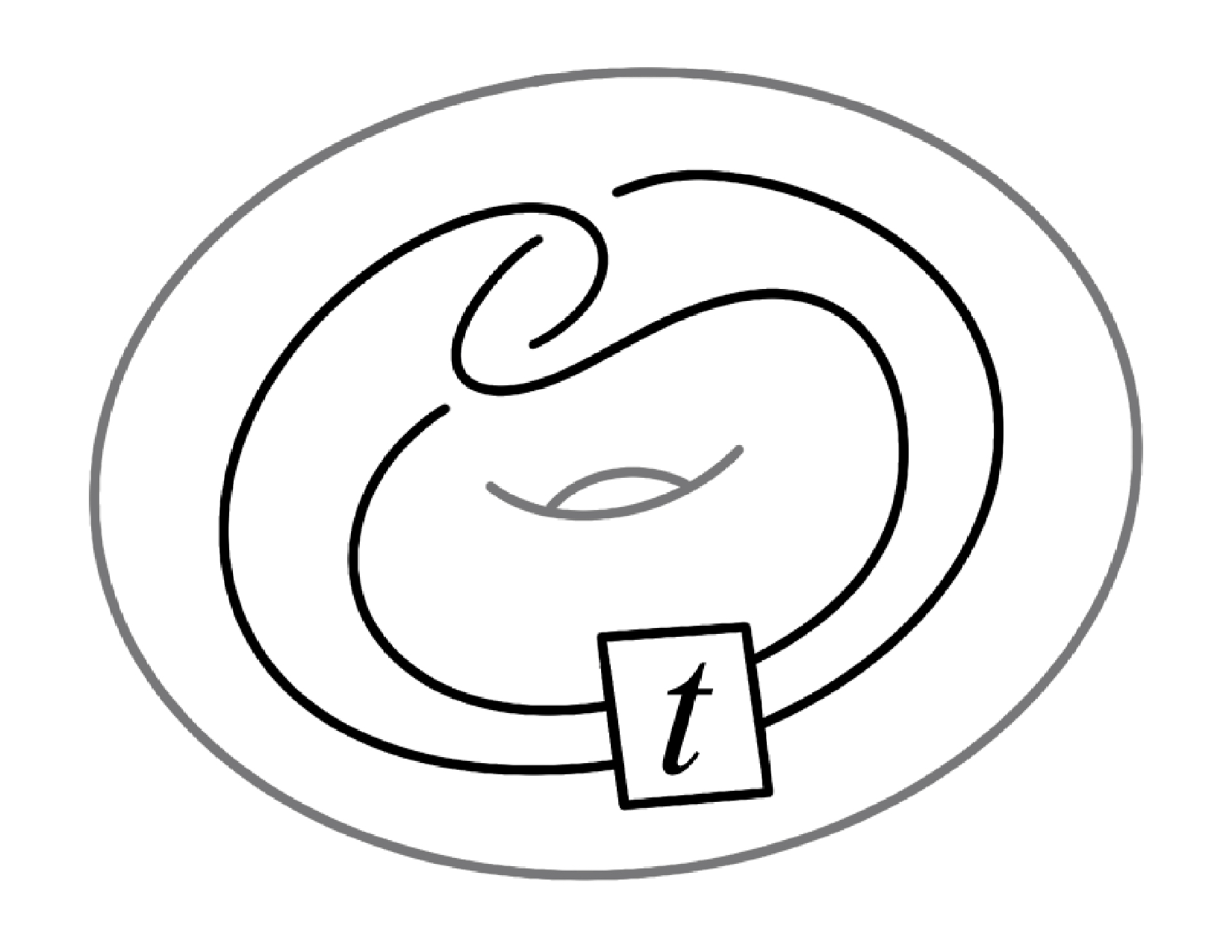}
    \caption{The Whitehead Double Pattern}
    \label{fig:whdTpattern}
\end{figure}

To see an example, we construct the knot of interest, the positive 0-twisted Whitehead double of the figure 8 knot in Figure \ref{fig:thewhdfig8knotconstruction}. Whitehead doubles have been a subject of interest for quite some time, for example the zero twisted Whitehead double of the trefoil knot was shown to not be smoothly slice by Akbulut in 1980 during a Santa Barbara Gauge Theory Conference, and the proof is given in Akbulut's book \cite{akbulutbook}.

\begin{figure}[h]
  \begin{subfigure}{0.31\textwidth}
    \includegraphics[width=\linewidth]{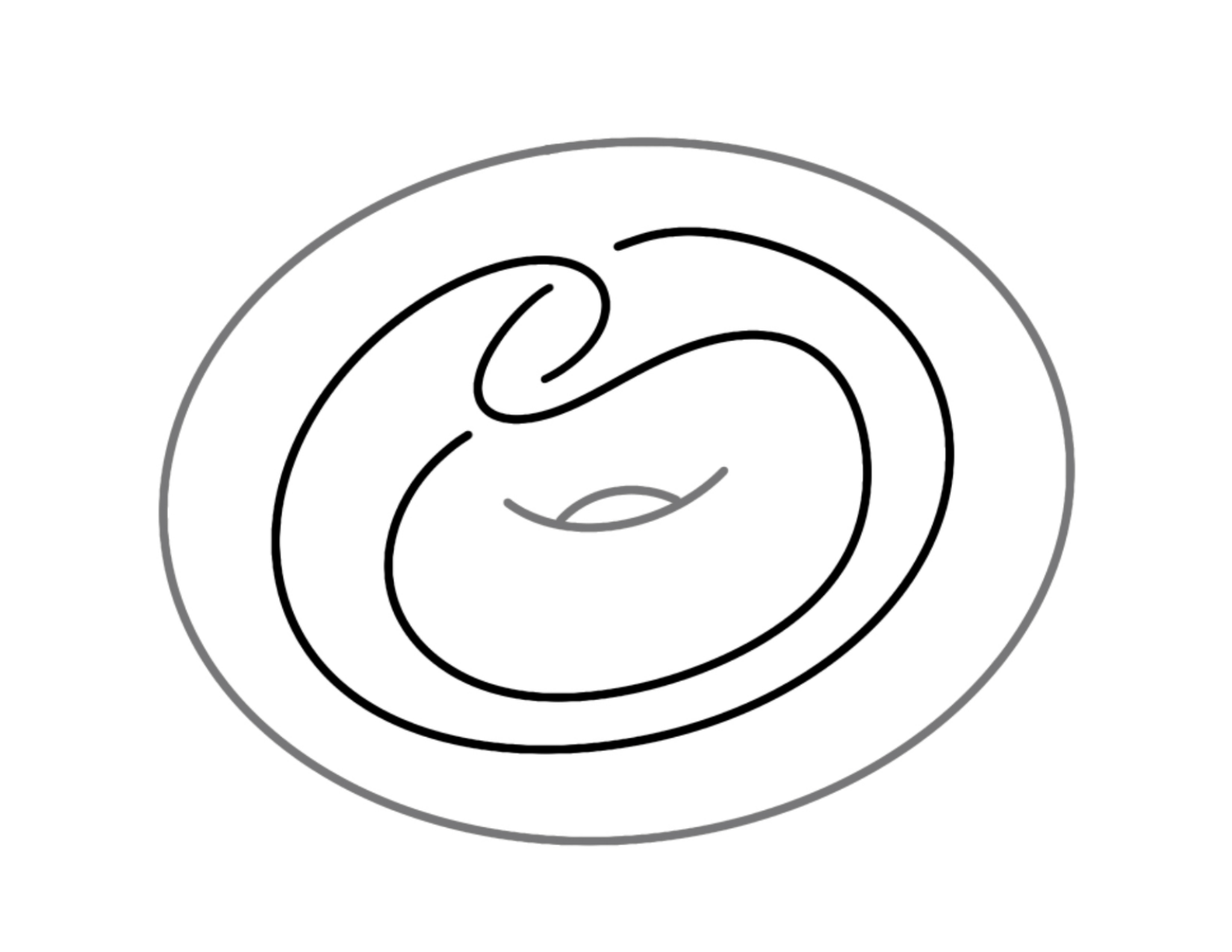}
    \caption{Untwisted Whitehead Double} 
  \end{subfigure}%
  \hspace{3mm} 
  \begin{subfigure}{0.22\textwidth}
    \includegraphics[width=\linewidth]{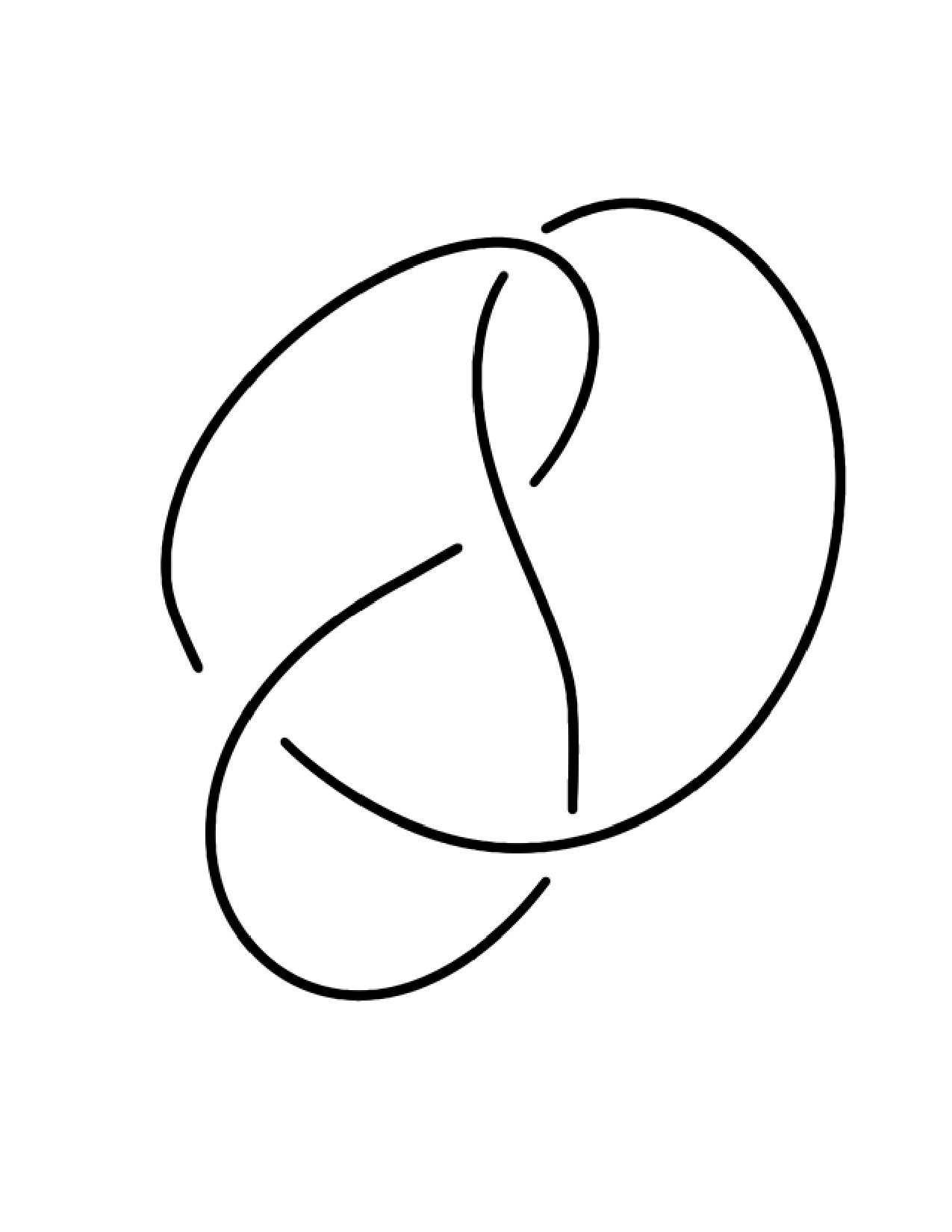}
    \caption{The Figure 8 Knot} \label{fig:fig8}
  \end{subfigure}%
  \hspace{3mm} 
  \begin{subfigure}{0.22\textwidth}
    \includegraphics[width=\linewidth]{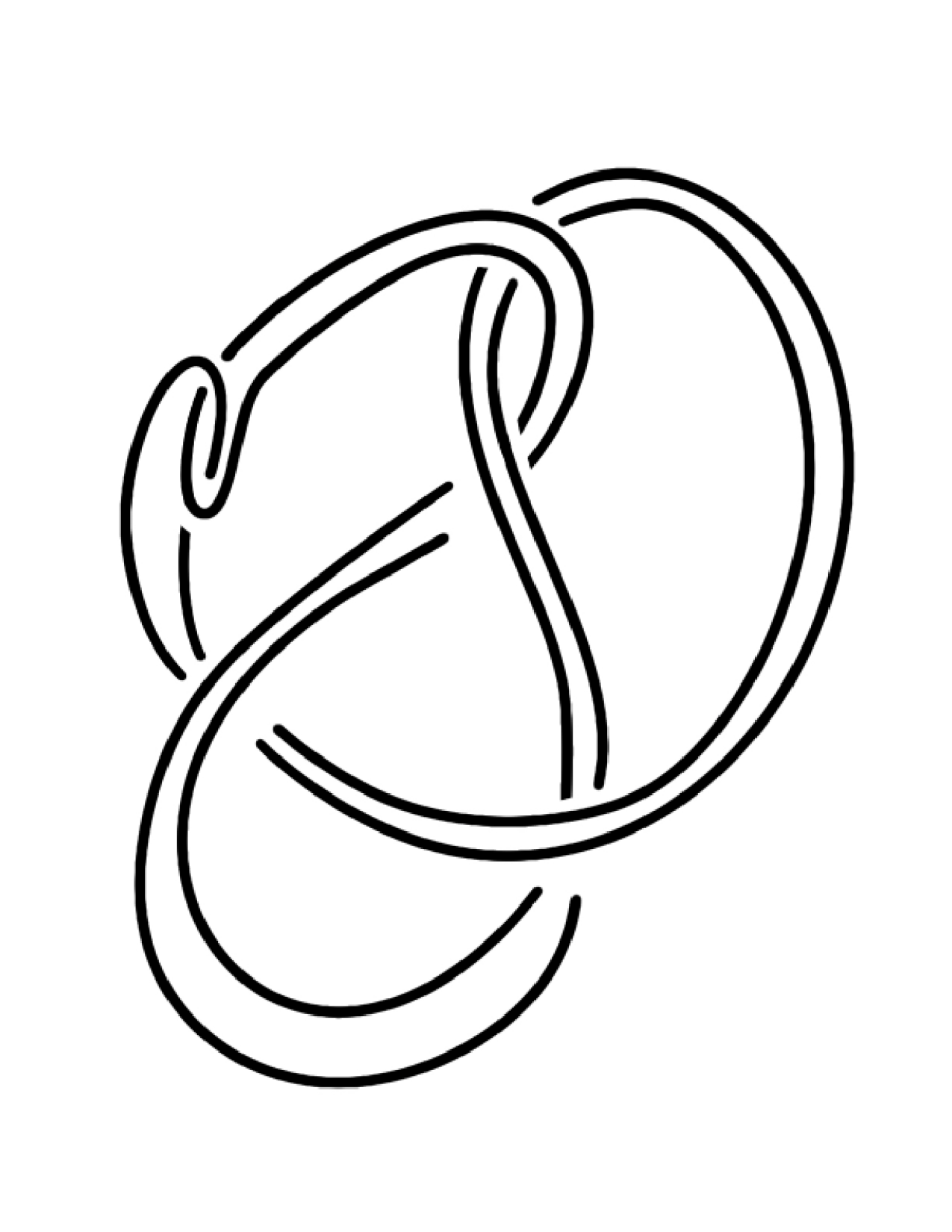}
    \caption{$\text{Wh}^{+}(4_{1})$} 
  \end{subfigure}

\caption{} \label{fig:thewhdfig8knotconstruction}
\end{figure}

\subsection{Classical Knot Invariants}

We begin with classical knot invariants. The invariants discussed in this paper are discussed in detail by Livingston \cite{livingstonKnotconc}. These invariants can all be calculated from a knot's \textit{Seifert surface}, which is an orientable surface bounded by a knot. We then define the orientable 3 genus, $g_{3}(K)$, as the minimum genus of a Seifert surface bounded by $K$. Most classical knot invariants for knots of 13 or less crossings have been calculated and are available on Knot Info \cite{knotinfo}. 

\begin{rem}
    It is a well known fact that $g_{4}(K) \leq g_{3}(K)$. One could see this to be true as imagining the Seifert surface bounded by $K$ being \textit{pushed} into the 4-ball. It is rare for equality to occur, an example is the unknot, which bounds a disk in $S^{3}$, so $g_{3}(U) = g_{4}(U) = 0$. 
\end{rem}

Given a knot $K$ and its Seifert surface $\Sigma$, we may construct the Seifert matrix, which is the matrix representing the Seifert pairing $V: H_{1}(\Sigma) \times H_{1}(\Sigma) \to \Z$. The map $V(x, y)$ is given by the linking number of $x$ and a positive pushoff of $y$. Using the Seifert matrix, we may compute many classical knot invariants, such as the signature, Arf invariant, and Alexander polynomial.  

The signature of a knot is denoted $\sigma(K)$, and is also referred to as the Murasugi signature \cite{signature}, and is calculated by $\sigma (V + V^{T})$ where $V$ is a Seifert matrix for $K$. 

\begin{thm}[Theorem 8.3 in \cite{signature}]
    If $K$ is slice, then $\sigma(K) = 0$.
\end{thm}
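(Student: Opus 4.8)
The plan is to run the classical argument that a slice knot is \emph{algebraically slice}, and then deduce $\sigma(K)=0$ from a short linear–algebra computation. Throughout, fix a connected Seifert surface $\Sigma$ for $K$ of genus $g$, with Seifert matrix $V$ (a $2g\times 2g$ integer matrix), so that, as recalled above, $\sigma(K)=\sigma(V+V^{T})$; recall also that $V-V^{T}$ is a matrix for the intersection form on $H_{1}(\Sigma)$ and is in particular unimodular.

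Assume $K$ bounds a smoothly embedded disk $D\subset B^{4}$. The geometric input I would use is the classical fact that the slice disk produces a rank-$g$ \emph{metabolizer} for $V$: a rank-$g$ direct summand $P\subseteq H_{1}(\Sigma;\Z)$ with $x^{T}Vy=0$ for all $x,y\in P$. To produce $P$, push the interior of $\Sigma$ into $B^{4}$ (fixing $\partial\Sigma=K$) to get a properly embedded surface $\Sigma'$, and work in the slice-disk exterior $X=B^{4}\setminus n(D)$, a $4$-manifold with $\partial X=S^{3}_{0}(K)$, $H_{1}(X;\Q)\cong\Q$ and $H_{2}(X;\Q)=0$. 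Poincar\'e--Lefschetz duality on $X$ (the ``half lives, half dies'' principle; equivalently, the fact that the Blanchfield pairing of $K$, which is carried by $X$, is metabolic) cuts the rank of the relevant image in half and yields a subspace $P$ of dimension exactly $g$; for $x,y\in P$ one then identifies $x^{T}Vy$ with the linking number $\operatorname{lk}_{S^{3}}(x,y^{+})$ of $x$ with a positive pushoff of $y$, and this is computed as the intersection number of two $2$-chains in $X$ that may be taken disjoint, hence vanishes. This ``slice $\Rightarrow$ algebraically slice'' step is where the real work lies and is the point I expect to be the main obstacle; it is carried out in detail in, e.g., the accounts of Levine and of Livingston \cite{livingstonKnotconc}.

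Granting the metabolizer $P$, the remaining step is purely algebraic: a Seifert matrix admitting a rank-$g$ metabolizer has $\sigma(V+V^{T})=0$. Choose a $\Z$-basis of $H_{1}(\Sigma;\Z)\cong\Z^{2g}$ whose first $g$ vectors span $P$ (possible since $P$ is a direct summand), and write $V$ in $g\times g$ blocks as
\[
V=\begin{pmatrix} 0 & B \\ C & E\end{pmatrix},\qquad\text{so that}\qquad V+V^{T}=\begin{pmatrix} 0 & B+C^{T}\\ (B+C^{T})^{T} & E+E^{T}\end{pmatrix}.
\]
Unimodularity of $V-V^{T}=\begin{pmatrix} 0 & B-C^{T}\\ -(B-C^{T})^{T} & E-E^{T}\end{pmatrix}$ together with the block form forces $\det(B-C^{T})=\pm1$; reducing mod $2$ and using $B+C^{T}\equiv B-C^{T}\pmod 2$ then shows $\det(B+C^{T})$ is odd, so $G:=B+C^{T}$ is invertible over $\Q$. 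The congruence
\[
\begin{pmatrix} I & 0\\ X & I\end{pmatrix}(V+V^{T})\begin{pmatrix} I & X^{T}\\ 0 & I\end{pmatrix}=\begin{pmatrix} 0 & G\\ G^{T} & 0\end{pmatrix},\qquad X=-\tfrac12\,(E+E^{T})\,G^{-1},
\]
(a direct check) then exhibits $V+V^{T}$ as congruent to an orthogonal sum of hyperbolic planes, which has signature $0$. Hence $\sigma(K)=\sigma(V+V^{T})=0$.

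For completeness I note an alternative $4$-dimensional route that replaces the duality argument by the signature formula for double branched covers. The double branched cover $W=\Sigma_{2}(B^{4},\Sigma')$ has boundary $\Sigma_{2}(K)$ and intersection form congruent to $V+V^{T}$, so $\operatorname{sign}(W)=\sigma(K)$, while the double branched cover $W'=\Sigma_{2}(B^{4},D)$ along the slice disk is a rational homology $4$-ball, so $\operatorname{sign}(W')=0$. Placing $\Sigma'$ and $D$ in opposite hemispheres of $S^{4}=B^{4}\cup_{S^{3}}B^{4}$ makes $\widehat F:=\Sigma'\cup_{K}D$ a closed genus-$g$ surface in $S^{4}$ with $W\cup_{\Sigma_{2}(K)}\overline{W'}=\Sigma_{2}(S^{4},\widehat F)$, and since $[\widehat F]^{2}=0$ in $H_{2}(S^{4})=0$ the branched-cover signature formula gives $\operatorname{sign}\Sigma_{2}(S^{4},\widehat F)=0$. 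Novikov additivity then yields $0=\operatorname{sign}(W)-\operatorname{sign}(W')=\sigma(K)$.
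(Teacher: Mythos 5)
Your argument is correct, and it is the standard proof of this classical theorem of Murasugi; note that the paper itself offers no proof at all here --- it simply quotes the result from \cite{signature} --- so there is nothing internal to compare against. Your two-step structure (slice $\Rightarrow$ the Seifert form admits a rank-$g$ metabolizer, then metabolic $\Rightarrow$ $\sigma(V+V^{T})=0$) is exactly the route in Livingston's survey, and the algebraic half is carried out correctly: the congruence with $X=-\tfrac12 (E+E^{T})G^{-1}$ does kill the lower-right block, and a nondegenerate symmetric form on $\R^{2g}$ with a $g$-dimensional totally isotropic subspace has signature $0$. The one place you are terse is the claim that unimodularity of $V-V^{T}$ forces $\det(B-C^{T})=\pm1$; this deserves a sentence. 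The point is that the Seifert pairing vanishing on $P$ makes $P$ isotropic for the intersection form $V-V^{T}$, so $P$ is a Lagrangian direct summand of a unimodular skew form, and the induced pairing $P\times(\Z^{2g}/P)\to\Z$ is therefore perfect; its matrix is $B-C^{T}$ up to sign and transpose, whence $\det(B-C^{T})=\pm1$. (One also needs $P$ to be a \emph{primitive} sublattice for the mod-$2$ step, which you get for free by saturating the kernel of $H_{1}(\Sigma;\Q)\to H_{1}(X;\Q)$ inside the integral lattice.) Your alternative route via double branched covers, Novikov additivity, and the vanishing of the signature of the $\Z/2$-homology ball $\Sigma_{2}(B^{4},D)$ is also a correct and standard proof (Kauffman--Taylor), and has the advantage of generalizing directly to the Tristram--Levine signatures via the $G$-signature theorem.
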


In fact, the signature offers a lower bound for the 4 genus. 

\begin{equation}
    \dfrac{|\sigma (K) |}{2} \leq g_{4}(K)
\end{equation}

The Arf invariant of a knot, $\arf(K)$, is either 0 or 1, and is calculated from the Seifert matrix $V$ by defining a non-singular quadratic form $q(x) = xVx^{T}$ and examining the Arf invariant of the quadratic form $q$, details can be found in \cite{livingstonKnotconc}.  If $K$ is a slice knot, it is well known that $\arf(K)=0$. 

\begin{ex}
    The right-handed trefoil knot, denoted $3_{1}$, is not a slice knot. We can determine this by calculating its signature, $\sigma (3_{1}) = -2$. The figure 8 knot, $4_{1}$, has signature $\sigma(4_{1}) = 0$ and $\arf (4_{1}) = 1$, so the signature does not obstruct sliceness, but the Arf invariant does. 
\end{ex}

Using the Seifert matrix $V$ for a knot $K$, one may compute the Alexander polynomial $\Delta_{K}(t) = \det(V - tV^{T}) \in \Z [t, t^{-1}]$. We then have the obstruction from Fox and Milnor \cite{FoxMilnor}. 

\begin{thm}[Fox-Milnor Condition \cite{FoxMilnor}]
    If $K$ is a slice knot, then its Alexander polynomial can be written as $\Delta_{K}(t) = f(t)f(t^{-1})$ with integer coefficients.
\end{thm}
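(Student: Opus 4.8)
The plan is to derive the factorization from the existence of a \emph{metabolizer} for the Seifert form, that is, a half-rank direct summand of $H_{1}(\Sigma)$ on which the Seifert pairing $V$ vanishes identically. Concretely, I would fix a genus-$g$ Seifert surface $\Sigma$ for $K$, so that $H_{1}(\Sigma) \cong \Z^{2g}$ and $V$ is given by a $2g \times 2g$ integer Seifert matrix, and then show that sliceness forces $V$ to be congruent over $\Z$ to a block matrix whose top-left $g \times g$ block vanishes. The factorization then falls out of a determinant computation.

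First I would construct the metabolizer geometrically. Pushing $\Sigma$ slightly into $B^{4}$ and capping off along $K$ with the slice disk $D$ produces a closed genus-$g$ surface $\widehat{\Sigma} = \Sigma \cup_{K} D$ smoothly embedded in $B^{4}$. Setting $L = \ker\big( H_{1}(\Sigma) \to H_{1}(B^{4} \setminus n(D)) \big)$, a \emph{half-lives-half-dies} argument --- Poincar\'e--Lefschetz duality for the exterior of the slice disk together with the long exact sequence of the relevant pair --- shows that $L$ is a rank-$g$ summand of $H_{1}(\Sigma)$. Moreover, for classes $x, y \in L$, the Seifert linking number $V(x,y) = \mathrm{lk}(x^{+}, y)$ may be computed in $B^{4}$, where both $x^{+}$ and $y$ bound in the disk exterior; this forces $V(x,y) = 0$, so $V|_{L \times L} \equiv 0$.

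With the metabolizer in hand the remainder is linear algebra. Choosing an integral basis of $H_{1}(\Sigma)$ adapted to the summand $L$, the Seifert matrix takes the form $V = \left(\begin{smallmatrix} 0 & B \\ C & D \end{smallmatrix}\right)$ with $g \times g$ integer blocks and vanishing top-left block, so that $V^{T} = \left(\begin{smallmatrix} 0 & C^{T} \\ B^{T} & D^{T} \end{smallmatrix}\right)$. Then, using the block formula for a determinant with a zero corner,
\[
\Delta_{K}(t) = \det(V - t V^{T}) = \det\begin{pmatrix} 0 & B - t C^{T} \\ C - t B^{T} & D - t D^{T} \end{pmatrix} = (-1)^{g}\det(B - t C^{T})\,\det(C - t B^{T}).
\]
Writing $f(t) = \det(B - t C^{T}) \in \Z[t]$ and passing to transposes, one computes $\det(C - t B^{T}) = \det(C^{T} - t B) = (-1)^{g} t^{g} f(t^{-1})$, whence $\Delta_{K}(t) = t^{g} f(t) f(t^{-1})$. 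Since the Alexander polynomial is only well defined up to a unit $\pm t^{k}$ in $\Z[t,t^{-1}]$, this gives $\Delta_{K}(t) \doteq f(t) f(t^{-1})$ with $f$ having integer coefficients, as claimed.

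I expect the main obstacle to be the geometric step: verifying both that $L$ has rank exactly $g$ (the "dies" half of the duality argument) and that the Seifert pairing genuinely vanishes on $L$, rather than merely on its image in the ambient $4$-ball. Care is also needed in the duality bookkeeping for the exterior of a locally flat (rather than smooth) disk if one wants the topological version of the statement. Once the metabolizer is secured, the determinant identity is purely formal.
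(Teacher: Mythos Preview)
The paper does not prove this statement: it is quoted as background and attributed to Fox and Milnor with a citation, with no argument given. So there is no ``paper's own proof'' to compare against.

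That said, your outline is the standard metabolizer proof and is essentially correct. The geometric construction of $L = \ker\big(H_{1}(\Sigma) \to H_{1}(B^{4}\setminus n(D))\big)$, the rank computation via Poincar\'e--Lefschetz duality, the vanishing of $V$ on $L$, and the block-determinant identity are all as you describe; your determinant bookkeeping checks out and yields $\Delta_{K}(t) \doteq f(t)f(t^{-1})$ up to a unit. The one point to be careful about, which you flag yourself, is that ``rank $g$'' is not quite the same as ``rank-$g$ direct summand of $\Z^{2g}$'': one must argue that $L$ is primitive (equivalently, that $H_{1}(\Sigma)/L$ is torsion-free) in order to realize the block form over $\Z$. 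This is handled either by a torsion argument in the duality step or, more cheaply, by working over $\Q$ and noting that the resulting $f$ can be cleared of denominators since $\Delta_{K}$ has integer coefficients. With that caveat, your proposal is a complete and correct proof of the cited theorem.
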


We also have from Freedman \cite{topSlice} that if $\Delta_{K}(t) = 1$, then $K$ is topologically slice. There are several known examples of knots that are topologically slice but not smoothly slice, however this result can be extremely difficult to prove. 

\begin{rem}\label{rem:topologicallySliceWhd}
    It is well known that the Alexander polynomial of any untwisted Whitehead double of a knot $K$ is trivial. Thus presenting an infinite family of knots that are topologically slice but not smoothly slice. For thoroughness, we state the formula for calculating the Alexander polynomial of $b$-twisted Whitehead doubles in general. We briefly switch notation from $t$-twisted to $b$-twisted Whitehead doubles to avoid a notational nightmare around the variable $t$ in the Alexander polynomial and the number of twists $b$ for the Whitehead double. 
    $$\Delta_{\text{Wh}_{b}^{\pm}(K)}(t) = -b \cdot t + (2b +1) - b \cdot t^{-1}$$
We see that the Alexander polynomial for the untwisted Whitehead double $\Delta_{\text{Wh}^{\pm}(K)}(t) = 1$ for any knot $K$.     
\end{rem}

\begin{rem}
There is a quick computation for the Arf invariant of a knot if the Alexander polynomial is known, given by Murasugi \cite{murasugiARF}. If $\Delta_{K}(-1) \equiv \pm 1 \pmod{8}$, then $\arf (K) = 0$, otherwise $\arf (K) = 1$.  
\end{rem}

\begin{rem}\label{rem:whdclassicalinv}
    The untwisted Whitehead double of the figure 8 knot, $K = \text{Wh}(4_{1})$, has vanishing classical knot invariants. From Remark \ref{rem:topologicallySliceWhd} we know $\Delta_{K}(t) = 1$, and thus $\arf (K) = 0$. We also have that the signature of the untwisted Whitehead double of $K$ is equal to 0 \cite{HeddenWHD}. Thus, none of the classical knot invariants offer an obstruction to $\text{Wh}(4_{1})$ being smoothly slice. We compute the Seifert matrix in general for the $t$-twisted Whitehead doubles in Section \ref{sect:NOslice}.
\end{rem}

We mention that, in this paper, we are concerned with smooth 4 genus and smooth concordance. However, one may also be interested in algebraic concordance, which stems from \textit{Seifert forms} abstractly, and can be viewed through the lens of knot theory by taking Seifert surfaces of knots from which we compute Seifert forms. The difference between a Seifert form and a Seifert matrix is that a Seifert matrix specifically comes from a Seifert surface for a knot, while a Seifert form is defined from an abstract Seifert pairing on a finitely generated free $\Z$-module $M$ as a bilinear form $V: M \times M \to \Z$ satisfying $V - V^{T}$ is unimodular. Levine \cite{levine1, levine2} is credited with classifying the higher dimensional knot concordance to give a surjective homomorphism $\phi: \mathcal{C} \to \Z^{\infty} \oplus \Z_{2}^{\infty} \oplus \Z_{4}^{\infty}$, where $\mathcal{C}$ is the smooth knot concordance group. 

One could examine other knot signatures, such as the Levine-Tristram signatures, denoted $\sigma_{\omega}(V)$ for a Seifert form $V$ and $\omega$ a complex number. This invariant is over viewed in Livingston's survey \cite{livingstonKnotconc} and credited to Levine and Tristram \cite{LTsignature}. Moving toward concordance, abstractly, two Seifert forms $V_{1}$ and $V_{2}$ are algebraically concordant if $V_{1} \oplus -V_{2} $ is metabolic.  

\begin{thm}
    If $K$ is slice and $\Sigma$ a Seifert surface for $K$, then the associated Seifert form is metabolic. 
\end{thm}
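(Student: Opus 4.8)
The plan is to extract a metabolizer for the Seifert form $V$ from a slice disk, via the ``half lives, half dies'' principle.

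First I would fix a smooth slice disk $D\subset B^{4}$ for $K$ and form the closed oriented surface $\widehat\Sigma:=\Sigma\cup_{K}D$: push $\operatorname{int}\Sigma$ into $B^{4}$ rel $\partial$, and after a small perturbation near $K$ arrange $\Sigma\cap D=K$, so that $\widehat\Sigma$ is a smoothly embedded genus-$g$ surface in $B^{4}$, where $2g=\operatorname{rk}H_{1}(\Sigma)$. Since $H_{2}(B^{4})=0$, the class $[\widehat\Sigma]$ vanishes and $\widehat\Sigma$ has trivial normal bundle; using the class generating $H^{1}(B^{4}\setminus\widehat\Sigma)\cong\Z$ together with transversality, $\widehat\Sigma$ bounds a compact oriented $3$-manifold $W^{3}\subset B^{4}$. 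Applying Poincar\'e--Lefschetz duality to $W$, the subspace $\mathcal M:=\ker\!\big(H_{1}(\widehat\Sigma;\Q)\to H_{1}(W;\Q)\big)$ is Lagrangian for the intersection form on the closed surface $\widehat\Sigma$, so $\dim_{\Q}\mathcal M=g$. Capping the (already null-homologous) boundary circle $K$ identifies $H_{1}(\widehat\Sigma)\cong H_{1}(\Sigma)$, so $\mathcal M$ is a half-dimensional subspace of $H_{1}(\Sigma;\Q)$ killed by the intersection pairing $V-V^{T}$ of $\Sigma$; saturating, I may take $\mathcal M$ to be a rank-$g$ direct summand of $H_{1}(\Sigma;\Z)$.

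The substance of the proof is then to show $V$ itself, not merely its antisymmetrization, vanishes on $\mathcal M$: for $x,y\in\mathcal M$ one needs $V(x,y)=\operatorname{lk}_{S^{3}}(x,y^{+})=0$. I would realize $x$ and $y$ by disjoint simple closed curves on $\Sigma$ spanning $\mathcal M$ (possible since $\mathcal M$ is Lagrangian), so that $\operatorname{lk}_{S^{3}}(x,y^{+})$ does not depend on the choice of pushoff. Since $x$ and $y$ die in $H_{1}(W)$, each bounds a properly embedded surface $P_{x},P_{y}$ in the $3$-manifold $W$; capping $x$ with $P_{x}$ (plus the trace annulus of the push-in) and capping a pushoff of $y$ with $P_{y}$ pushed slightly off $W$ exhibits $\operatorname{lk}_{S^{3}}(x,y^{+})$ as the algebraic intersection in $B^{4}$ of two surfaces that can be made disjoint. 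Hence $V|_{\mathcal M\times\mathcal M}=0$, so $\mathcal M$ is a metabolizer and $V$ is metabolic. This last step is where the care is needed --- correctly aligning the normal direction of $W$ along $\widehat\Sigma$ with the pushoff direction and checking the collar contributions cancel --- but it becomes routine once $x$ and $y$ are arranged disjoint.

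As an alternative route I would work in the infinite cyclic cover of the slice-disk exterior $N=B^{4}\setminus n(D)$, a homology $S^{1}\times D^{3}$: curves on $\Sigma$ have zero linking number with $K$, so $\Sigma$ lifts to the cover, and a Milnor-type duality argument there produces the half-dimensional metabolizer directly --- in effect Levine's theorem that slice knots are algebraically slice. Either way the crux is the same: producing the half-dimensional isotropic subspace and verifying the Seifert pairing annihilates it.
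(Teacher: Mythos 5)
Your argument is essentially correct, but note that the paper does not actually prove this statement: it is stated as a well-known fact and attributed to Livingston (Theorem 2.6 of his survey), Conway, and Levine, so there is no in-paper proof to compare against. What you have written is a faithful outline of the standard argument from those references: cap $\Sigma$ with the slice disk, bound the resulting closed surface by a $3$-manifold $W\subset B^{4}$ via the map to $S^{1}$ dual to the meridian, take the half-dimensional kernel of $H_{1}(\widehat\Sigma;\Q)\to H_{1}(W;\Q)$ furnished by Poincar\'e--Lefschetz duality, and then kill the Seifert pairing (not just its antisymmetrization) by capping $x$ and the pushoff $y^{+}$ with surfaces in $W$ and in a normal pushoff of $W$, respectively, together with the trace annuli of the push-in. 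The two points you flag as needing care are indeed the only delicate ones: aligning the positive normal of $\Sigma$ with a normal direction of $W$ along $\widehat\Sigma$, and passing from the rational Lagrangian to an integral direct summand by saturation; both are routine. One small remark: realizing a basis of $\mathcal M$ by \emph{disjoint simple closed} curves is not actually needed (the linking-number-of-pushoffs computation works at the level of homology classes, and the independence from the choice of pushoff already follows from $x\cdot y=0$), and asserting that every Lagrangian summand is so realizable is a separate fact you would have to justify; dropping that step makes the argument cleaner.
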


This is a well known fact, and can be found as Theorem 2.6 in Livingston's survey \cite{livingstonKnotconc}, Theorem 1.8 from Conway \cite{Conwayalg}, or an interested reader could read about metabolic forms from Levine \cite{metabolicFormsLevine}. 

We note that Seifert forms may be used to obstruct sliceness, but algebraic concordance is weaker, in general, than smooth concordance. Given $K$ is algebraically slice, there are cases where it can be shown to not be smoothly slice using Casson-Gordon invariants \cite{cassongordon}. We will not define nor use Casson-Gordon invariants in this paper, but again refer to Livingston's survey \cite{livingstonKnotconc} and notes by Conway \cite{Conwayalg}.

\subsection{Knot Floer Homology}

There are several invariants coming from knot Floer homology that offer slice genus bounds. We provide the references for the bounds, and refer the reader to Ozsv\'ath and Szab\'o \cite{hfkandfourballgenus} for details on the construction of the Floer complexes and the details on computing the invariants.  

\begin{cor}[Corollary 1.3 in \cite{hfkandfourballgenus}]
   Given a knot $K$, $|\tau (K) | \leq g_{4}(K)$.
\end{cor}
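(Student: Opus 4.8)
We sketch how one would prove the bound $|\tau(K)|\le g_4(K)$. The plan is to deduce it from the concordance invariance of $\tau$ together with the behavior of $\tau$ under genus-one cobordisms. First I would record the base value $\tau(U)=0$: the knot Floer homology of the unknot $U$ is supported in a single Alexander grading, so the induced filtration on $\widehat{CF}(S^3)$ is trivial and $\tau(U)=0$. Next, given a smoothly embedded orientable surface $F\subset B^4$ with $\partial F=K$ and genus $g=g_4(K)$, I would delete from the interior of $B^4$ a small ball meeting $F$ in a trivial disk; this presents $F$ minus that disk as a connected, oriented cobordism of genus $g$, embedded in $S^3\times[0,1]$, from $K$ to $U$. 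So it suffices to prove the cobordism estimate: if knots $K_0$ and $K_1$ cobound a connected oriented surface of genus $g$ in $S^3\times[0,1]$, then $|\tau(K_0)-\tau(K_1)|\le g$; applying this with $K_0=K$ and $K_1=U$ then gives $|\tau(K)|\le g_4(K)$.

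To prove the cobordism estimate I would put the cobordism $C$ in Morse position with respect to the $[0,1]$-coordinate and induct on $g$. When $g=0$, $C$ is a concordance (after cancelling adjacent birth--death pairs), and $\tau$ is a concordance invariant, so $\tau(K_0)=\tau(K_1)$. For $g\ge 1$, I would choose a separating simple closed curve on $C$ splitting it into a genus-one cobordism from $K_0$ to some knot $K'$ and a genus-$(g-1)$ cobordism from $K'$ to $K_1$, isotope this curve to a level set, and close the induction via the triangle inequality. The genus-one case is the heart of the matter: such a cobordism is generically a pair of oriented saddle moves carrying $K_0$ to a two-component link $L$ and then $L$ to $K_1$. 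Here I would invoke the knot Floer skein exact sequence relating the (filtered) Floer complexes before and after an oriented resolution, tracking the Alexander-filtration shifts and the grading shift coming from the change in the number of components, to conclude that the net effect of the two saddles on $\tau$ is a change of at most $1$.

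The main obstacle is precisely this last step: a single oriented saddle can shift $\tau$ by a full unit, so naively accounting for the two saddles in a genus-one cobordism only yields a bound of $2$, and one must show that the two saddles comprising a single handle cannot push $\tau$ the same direction by a full unit each — equivalently, that a genus-one piece behaves like one crossing-change-type move rather than two independent ones. Making this quantitative is where the exact-triangle bookkeeping in knot Floer homology does the real work. An alternative route to the same estimate, which I would fall back on if the cobordism decomposition proves awkward, is to cap $F$ off with the core of a large-framing $2$-handle attached to $B^4$ along $K$, obtaining a closed genus-$g$ surface of positive self-intersection in a $4$-manifold with boundary $S^3_n(K)$, and then to extract $|\tau(K)|\le g$ from an adjunction-type inequality bounding the $d$-invariants of $S^3_n(K)$ (which for $n\gg 0$ are governed by $\tau(K)$) against the genus and self-intersection of that surface; applying this to $K$ and to its mirror, where $\tau(\overline{K})=-\tau(K)$ and $g_4(\overline{K})=g_4(K)$, handles both signs.
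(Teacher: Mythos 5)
The paper does not actually prove this statement; it is imported verbatim as Corollary 1.3 of Ozsv\'ath--Szab\'o's four-ball genus paper, so there is no in-paper argument to compare against. Judging your sketch on its own terms: your ``fallback'' route (cap off the slice surface with the core of a large-framing two-handle to get a closed genus-$g$ surface of positive square with boundary data $S^3_n(K)$, and run an adjunction-type inequality against the correction terms, which for $n \gg 0$ are governed by $\tau(K)$; then apply the result to the mirror to get both signs) is essentially Ozsv\'ath and Szab\'o's actual proof. That part of your plan is sound, and it is the route you should lead with rather than hold in reserve.

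Your primary route has a genuine gap exactly where you flag it. The reduction to a cobordism estimate and the induction down to genus one are fine, but the claim that ``exact-triangle bookkeeping'' shows the two saddles of a genus-one piece move $\tau$ by at most one \emph{in total} is the entire content of the theorem, and the skein exact sequence by itself does not deliver it. An oriented saddle takes a knot to a two-component link, and $\tau$ as you have set it up is defined only for knots, so there is no intermediate invariant of the link $L$ to which a ``cost at most $1/2$ per saddle'' estimate can attach; one needs either a link extension of $\tau$ with controlled saddle behavior or a replacement of the genus-one piece by a crossing-change/full-twist model, neither of which you supply. (A smaller issue: a genus-one cobordism between knots need not consist of exactly two saddles; one must first put it in normal form --- births, then saddles, then deaths --- and absorb the birth--death pairs into concordances before counting.) None of this makes the statement false, but as written the key inequality is asserted rather than proved, and it is the adjunction argument, not the saddle count, that actually closes it.
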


Hedden \cite{HeddenWHD} discusses the knot Floer complex for Whitehead doubles, as well as calculates the $\tau$ invariant. 

\begin{thm}[Theorem 1.4 in \cite{HeddenWHD}]\label{thm:tau}
    $$ \tau( \whd) =\begin{cases}
			 0 &\text{ for } t \geq 2 \tau(K)  \\
		   1 &\text{ if } t < 2 \tau(K)  
		\end{cases}$$
\end{thm}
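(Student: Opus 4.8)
The plan is to compute the filtered chain homotopy type of the knot Floer complex of $\whd$ in terms of the filtered complex $CFK^\infty(K)$ of the companion, and then read off $\tau$ as the filtration level supporting the image of the generator of $\widehat{HF}(S^3) \cong \mathbb{F}$. The starting point is that $\whd$ is a satellite with the Whitehead pattern, which has winding number $0$ in the solid torus; this already forces $\Delta_{\whd}(t) = 1$ (Remark \ref{rem:topologicallySliceWhd}) and confines $\tau(\whd)$ to the interval $[0, g_3(\whd)] = [0,1]$, so the only real question is \emph{when} the value jumps from $0$ to $1$.

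First I would set up a doubly-pointed Heegaard diagram for $\whd$ adapted to the satellite decomposition, isolating the contributions of the clasp and of the $t$-twist region. The main structural input is the large-surgery description of the subquotient complexes $\hat{A}_s(K) := H_*\big(C\{\max(i, j-s) = 0\}\big)$ of $CFK^\infty(K)$, which are computed by sufficiently large surgeries $S^3_N(K)$ in the appropriate $\mathrm{Spin}^c$ structures. Because the Whitehead pattern is a two-stranded, winding-number-zero curve with a single clasp, its Floer complex assembles out of copies of the $\hat{A}_s(K)$, and both the Alexander grading of the double and the choice of which $\hat{A}_s$ appear are controlled by the twisting parameter $t$.

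The second step is to track these gradings precisely. The key feature of the family $\{\hat{A}_s(K)\}$ is that the image of its distinguished generator under the natural map to $\widehat{HF}(S^3) \cong \mathbb{F}$ sits at filtration level $0$ for $s \geq \tau(K)$ and at a strictly positive level for $s < \tau(K)$; this transition at $s = \tau(K)$ is exactly the content of the definition of $\tau$. The $t$-twisting reindexes the pattern so that the relevant threshold for the double occurs at $t = 2\tau(K)$ — the factor of two reflecting that the pattern wraps the companion twice — and I would show that for $t \geq 2\tau(K)$ the surviving generator of $\widehat{HF}(S^3)$ lands at filtration level $0$, giving $\tau(\whd) = 0$, whereas for $t < 2\tau(K)$ it is pushed up to filtration level $1$, giving $\tau(\whd) = 1$.

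The main obstacle is establishing the identification in the first two steps rigorously: one must prove that the filtered complex of the Whitehead double genuinely decomposes in terms of the $\hat{A}_s(K)$ with the asserted grading shifts, rather than merely matching ranks, and then control the differentials coming from the clasp so as to verify that the surviving generator lands at the claimed filtration level on each side of the threshold. A clean alternative is to route the computation through a skein exact sequence relating $\whd$ to the appropriate two-cable $K_{2,n}$ of $K$ and to the unknot obtained by an unclasping crossing change, importing the already-known behavior of $\tau$ for iterated cables. Either way, once the filtration level of the distinguished generator is pinned down on both sides of $t = 2\tau(K)$, the case split in the statement follows immediately.
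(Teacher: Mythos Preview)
The paper does not prove this statement at all: Theorem~\ref{thm:tau} is quoted verbatim from Hedden's paper \cite{HeddenWHD} (it is his Theorem~1.4) and is used here purely as background input for the survey in Section~\ref{section:background}. There is therefore no ``paper's own proof'' to compare your proposal against.

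That said, your outline is a fair high-level summary of Hedden's actual argument: he does build a Heegaard diagram adapted to the satellite decomposition, identifies the filtered subquotients with the complexes $\hat{A}_s(K)$ coming from the large-surgery formula, and reads off the jump in $\tau$ from the threshold $s=\tau(K)$, which becomes $t=2\tau(K)$ for the double. Your acknowledged ``main obstacle''---rigorously identifying the filtered complex of $\whd$ with the claimed assembly of $\hat{A}_s(K)$'s and controlling the differentials from the clasp---is precisely where all of the work in \cite{HeddenWHD} lies, and nothing you have written addresses it; what you have is a plan, not a proof. If you intend to supply an independent argument, the cabling/skein route you mention at the end is viable but also nontrivial, since it requires the computation of $\tau$ for $(2,2t+1)$-cables, which in Hedden's work is obtained by the same machinery.
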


\begin{cor}[Corollary 1.5 in \cite{HeddenWHD}]
    $\tau(\whd) \neq 0 $ if and only if $\tau (K) > 0$. Hence, if $\tau(K) >0$ then $\whd$ is not smoothly slice for every $i$.
\end{cor}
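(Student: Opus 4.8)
This is meant to fall out of Theorem~\ref{thm:tau} directly, so the work is bookkeeping rather than new input. First I would observe that, by Hedden's computation, $\tau(\whd)$ only ever takes the values $0$ or $1$; hence the condition $\tau(\whd)\neq 0$ is literally the same as $\tau(\whd)=1$, which by Theorem~\ref{thm:tau} is equivalent to the inequality $t<2\tau(K)$. Specializing to the untwisted double $t=0$ and using that $\tau(K)$ is an integer, the inequality $0<2\tau(K)$ holds precisely when $\tau(K)>0$; more generally, for any $t\le 0$ the inequality $t<2\tau(K)$ is implied by $\tau(K)>0$. This yields the asserted equivalence, with the case $t=0$ recovering exactly Hedden's Corollary~1.5 of \cite{HeddenWHD}.

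For the ``hence'' clause I would combine the above with the slice-genus bound $|\tau(K)|\le g_{4}(K)$ recorded earlier (Corollary~1.3 of \cite{hfkandfourballgenus}), applied to the knot $\whd$. If $\tau(K)>0$, then for every twisting parameter with $t<2\tau(K)$ — in particular for every $t\le 0$ — Theorem~\ref{thm:tau} gives $\tau(\whd)=1$, so $g_{4}(\whd)\ge |\tau(\whd)|=1>0$, and therefore $\whd$ bounds no smoothly embedded disk in $B^{4}$, i.e.\ it is not smoothly slice.

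The only point that requires any care is the range of $t$: the biconditional with ``$\tau(K)>0$'' is cleanest for the untwisted double, while the non-sliceness conclusion persists for all $t<2\tau(K)$ (and vacuously fails to be claimed for $t\ge 2\tau(K)$, where $\tau$ vanishes). There is no analytic obstacle here — all of the real content is already packaged into Theorem~\ref{thm:tau}, i.e.\ Hedden's filtered-chain-complex analysis of the knot Floer homology of Whitehead doubles, together with the $\tau$ slice-genus inequality, both of which I am taking as given.
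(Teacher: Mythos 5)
Your proposal is correct: the paper states this corollary as a citation of Hedden without proof, and your derivation --- reading off from Theorem~\ref{thm:tau} that $\tau(\whd)\in\{0,1\}$, hence $\tau(\whd)\neq 0$ iff $t<2\tau(K)$, then specializing to $t=0$ and applying the bound $|\tau|\leq g_{4}$ to $\whd$ --- is exactly the intended one. Your caveat that the biconditional is literally valid only for the untwisted double ($t=0$), with only the forward implication surviving for general $t<2\tau(K)$, is also correct and worth recording; the ``for every $i$'' in the statement appears to be a typo inherited from Hedden's indexing of the twisting parameter.
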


We additionally have a bound coming from the $\epsilon$ invariant, originally introduced by Hom \cite{epsilon}. 

\begin{prop}[Proposition 3.6 in \cite{epsilon}]
    If $K$ is slice, then $\epsilon(K) = 0$.
\end{prop}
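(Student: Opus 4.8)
The plan is to derive this from the concordance invariance of $\epsilon$, together with a direct computation for the unknot, using that a knot is slice exactly when it is smoothly concordant to the unknot. Recall that $\epsilon(K) \in \{-1, 0, 1\}$ is extracted from the filtered chain homotopy type of the full knot Floer complex $CFK^{\infty}(K)$ over $\mathbb{F}[U, U^{-1}]$, where $\mathbb{F}$ is the field with two elements: from the $(i,j)$-bifiltration one builds certain subquotient complexes and records which of the natural ``vertical'' and ``horizontal'' projections onto $\widehat{CF}(S^{3})$ is surjective on the distinguished tower in homology, with neither degenerating giving $\epsilon = 0$. (The bound $|\tau(K)| \le g_4(K)$ only shows $\tau(K) = 0$ for slice $K$, which is a priori weaker, so one really does need the whole complex $CFK^{\infty}(K)$.)

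The first step is to observe that $\epsilon$ is unchanged under adding an acyclic bifiltered direct summand: if $A$ is a bifiltered $\mathbb{F}[U,U^{-1}]$-complex with $H_*(A) = 0$, it contributes nothing to any homology group or map appearing in the definition, so $\epsilon(CFK^{\infty}(K) \oplus A) = \epsilon(CFK^{\infty}(K))$. Hence $\epsilon$ descends to an invariant of the \emph{stable equivalence class}, meaning $\epsilon(C_1) = \epsilon(C_2)$ whenever $C_1 \oplus A_1 \simeq C_2 \oplus A_2$ (filtered chain homotopy equivalence) with $A_1, A_2$ acyclic.

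The central step is then to show that if $K$ is slice, $CFK^{\infty}(K)$ is stably equivalent to $CFK^{\infty}(U) = \mathbb{F}[U, U^{-1}]$. This is the genuinely four-dimensional ingredient: a smooth slice disk $D \subset B^{4}$ for $K$ yields, after deleting a small ball about an interior point of $D$, a smoothly embedded annulus (a concordance) from $K$ to the unknot inside $S^{3} \times [0,1]$, and the maps this cobordism induces on knot Floer complexes, via functoriality of knot Floer homology under (decorated) surface cobordisms, realize the algebraic relation $CFK^{\infty}(K) \simeq CFK^{\infty}(U) \oplus A$ with $A$ acyclic. Finally one checks for the unknot that $CFK^{\infty}(U)$ is generated over $\mathbb{F}[U, U^{-1}]$ by a single cycle at Alexander grading $0$, for which both projections are isomorphisms; no degeneracy occurs and $\epsilon(U) = 0$. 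Combining these, $\epsilon(K) = \epsilon(U) = 0$.

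The main obstacle is the central step. Even granting the stable-equivalence framework, one must control precisely how the bifiltered complex $CFK^{\infty}$ transforms across a slice disk, which rests on the naturality of knot Floer homology under surface cobordisms in $B^{4}$ and on matching the resulting relation with stable equivalence; by contrast, the descent to stable equivalence classes is routine once the definition of $\epsilon$ is spelled out, and the unknot computation is immediate.
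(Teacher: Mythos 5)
This proposition is imported verbatim from Hom \cite{epsilon} (her Proposition 3.6); the paper gives no proof of its own, so your attempt should be measured against Hom's argument. Her proof is short and takes a different route from yours: writing $\bar{K}$ for the mirror of $K$, one combines $\nu(K) \in \{\tau(K), \tau(K)+1\}$ with the bound $\nu(K) \leq g_{4}(K)$ (the same inequality quoted in this section from \cite{ossrational} and \cite{rasmussen}), applied to both $K$ and $\bar{K}$. Sliceness gives $\tau(K) = \tau(\bar{K}) = g_{4}(K) = g_{4}(\bar{K}) = 0$, which forces $\nu(K) = \tau(K)$ and $\nu(\bar{K}) = \tau(\bar{K})$; unwinding the definitions, these equalities say exactly that the vertical map $v_{0}$ and the horizontal map $h_{0}$ on $H_{*}(\widehat{A}_{0})$ both hit the generator of $\widehat{HF}(S^{3})$, which is the condition $\epsilon(K) = 0$. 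All of the four-dimensional content is packaged in the inequality $\nu \leq g_{4}$.

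Your route is genuinely different, and its central step has a gap as written. Functoriality of knot Floer homology under decorated concordances (itself a substantial theorem of Zemke, later than and logically independent of the bare ``naturality'' you invoke) yields filtered, $U$-equivariant chain maps $CFK^{\infty}(K) \to CFK^{\infty}(U)$ and back, each an isomorphism on $HF^{\infty}$ --- that is, a \emph{local equivalence}. It does not by itself yield a bifiltered splitting $CFK^{\infty}(K) \simeq CFK^{\infty}(U) \oplus A$ with $A$ acyclic: having maps in both directions inducing isomorphisms on total homology is not formally equivalent to splitting off an acyclic summand, and the stable equivalence theorem for concordant knots (which is true, and due to Hom) is not proved by the one-line appeal to cobordism maps that you give; its proof runs through the surgery formula and the maps $v_{s}, h_{s}$, i.e., essentially the same machinery as the direct argument above. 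To repair your outline you must either cite Hom's stable equivalence theorem as a black box, or strengthen your first step from ``$\epsilon$ is unchanged by acyclic summands'' to ``$\epsilon$ is a local equivalence invariant,'' which you have not argued. The unknot computation and the reduction of sliceness to concordance with the unknot are fine.
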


Patwardhan and Xiao compute $\epsilon$ for generalized Mazur patterns in \cite{epsilonSatellites}, however we may restrict their formula for Whitehead doubles in particular.

\begin{thm}[Theorem 1.5 in \cite{epsilonSatellites}]\label{thm:epsilonHFK}
     $$ \epsilon( \whd) =\begin{cases}
			 0 &\text{ if } \tau(K) = \epsilon(K) = 0  \\
		   1 &\text{ otherwise } 
		\end{cases}$$
\end{thm}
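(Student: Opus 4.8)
The plan is to compute the full knot Floer complex $CFK^{\infty}(\whd)$ from $CFK^{\infty}(K)$ using bordered Floer homology and then read off $\epsilon$ directly. First I would record the positive $t$-twisted Whitehead pattern $P$ as a doubly-pointed bordered Heegaard diagram for the solid torus $V = S^{1}\times D^{2}$ and compute its type-$D$ module; pairing this with the type-$A$ module $\widehat{CFA}(S^{3}\setminus n(K))$, which by the work of Lipshitz–Ozsv\'ath–Thurston is determined by $CFK^{\infty}(K)$, yields $CFK^{\infty}(\whd)$. Equivalently, and more transparently, I would use the immersed-curve reformulation of Hanselman–Rasmussen–Watson: encode $CFK^{\infty}(K)$ as an immersed multicurve $\gamma(K)$ in the punctured torus and obtain the satellite complex by pairing $\gamma(K)$ with the curve representing $P$.

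The key structural observation is that the Whitehead pattern has winding number $0$ in $V$. Consequently the pairing is governed by the behavior of the distinguished essential component of $\gamma(K)$ in a neighborhood of the marked point, and this local behavior is precisely the data recorded by the pair $(\tau(K),\epsilon(K))$. I would make this precise by showing that $\epsilon(\whd)$ is an invariant of the local equivalence class of $CFK^{\infty}(K)$: a local (stable) equivalence between $CFK^{\infty}(K)$ and $CFK^{\infty}(K')$ induces one between the paired complexes, and $\epsilon$ depends only on the local equivalence class of its input. Establishing this functoriality of the winding-number-$0$ satellite operation on local equivalence classes is, I expect, the technical heart of the argument; it is what reduces the computation to the invariants $\tau(K)$ and $\epsilon(K)$ alone.

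With that reduction in hand, it suffices to evaluate $\epsilon(\whd)$ on a representative of each relevant class. When $\tau(K)=\epsilon(K)=0$ the companion behaves, for $\epsilon$, like the unknot $U$, so it is enough to compute $CFK^{\infty}(\text{Wh}^{+}_{t}(U))$; here $\text{Wh}^{+}_{t}(U)$ is a twist knot whose complex I can write down explicitly and check that $\epsilon=0$. When $\epsilon(K)=1$ the companion behaves like the positive trefoil $T_{2,3}$, and when $\epsilon(K)=-1$ like its mirror $\overline{T_{2,3}}$; in each case I would compute $CFK^{\infty}$ of the corresponding Whitehead double via the pairing and verify $\epsilon=1$, using $\epsilon(-J)=-\epsilon(J)$ to pass between the two sign cases. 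Since every knot has $\epsilon\in\{-1,0,1\}$, these three model computations exhaust the possibilities and yield the stated dichotomy.

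The step I expect to be the main obstacle is the explicit model computation together with its independence of the twisting parameter $t$. Because the pattern is winding-number $0$, the relevant tensor product involves substantial cancellation, and isolating the summand of $CFK^{\infty}(\whd)$ that determines $\epsilon$ requires care. Moreover the computation must be reconciled with Hedden's $t$-dependent formula for $\tau(\whd)$ in Theorem~\ref{thm:tau}: unlike $\tau$, the claimed value of $\epsilon$ does not vary with $t$ on a fixed local equivalence class, so I would pay particular attention to the boundary regimes in $t$ to confirm that the generator detecting $\epsilon$ survives with the same behavior across all twistings.
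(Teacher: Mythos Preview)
The paper does not prove this statement. Theorem~\ref{thm:epsilonHFK} appears in the survey portion of Section~\ref{section:background} purely as a quotation of Theorem~1.5 of Patwardhan--Xiao \cite{epsilonSatellites}; no proof or sketch is supplied, and the result is invoked only to record that $\epsilon(\text{Wh}(4_{1}))=0$. There is therefore nothing in the present paper to compare your proposal against.

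That said, your outline---compute the bordered or immersed-curve invariant of the Whitehead pattern, argue that for a winding-number-zero pattern the value of $\epsilon$ on the satellite depends only on the local behavior of the distinguished component of $\gamma(K)$ near the puncture, and then finish with explicit model computations on the unknot and the trefoils---is broadly the strategy pursued in \cite{epsilonSatellites}, so you are aiming at the right argument. One point to tighten: you phrase the reduction as ``$\epsilon(\whd)$ is an invariant of the local equivalence class of $CFK^{\infty}(K)$'' and then identify that class with the pair $(\tau(K),\epsilon(K))$. The local equivalence class is \emph{not} determined by $(\tau,\epsilon)$ when $\epsilon\neq 0$, so this step as written does not justify replacing an arbitrary companion by a trefoil. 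What you actually need is the finer claim that pairing with the Whitehead pattern sees only the first turn of the essential immersed curve on each side of the puncture, and it is \emph{that} datum which is encoded by $(\tau,\epsilon)$; making this precise is where the real work in your model computations lies.
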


We then move on to the $\nu$ invariants, introduced in \cite{ossrational}, where $\nu(K)$ is either $\tau(K)$ or $\tau(K) + 1$. Also, the invariants $\nu^{-}$ and $\nu^{+}$ were shown to be equal by Ozv\'ath and Szab\'o in \cite{OS1}. Then we have the following shown by Rasmussen \cite{rasmussen}.

\begin{prop}
    Given a knot $K$, $\nu^{+} = \nu^{-} \leq g_{4}(K)$. 
\end{prop}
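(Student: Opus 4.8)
The plan is to derive the bound $\nu^+(K) \le g_4(K)$ from the interpretation of $\nu^+$ in terms of the local $h$-invariants $V_s(K)$ of $K$, and to take the identity $\nu^+ = \nu^-$ as the already-cited input of Ozsv\'ath and Szab\'o \cite{OS1}, so that only the inequality needs genuine work.

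First I would recall the standard repackaging of $CFK^\infty(K)$: one extracts a non-increasing sequence of non-negative integers $V_0(K) \ge V_1(K) \ge \cdots$ with $V_s(K) = 0$ for all sufficiently large $s$, and then $\nu^+(K) = \min\{\, s \ge 0 : V_s(K) = 0 \,\}$. These $V_s$ measure the drop in the correction terms of large positive surgery: for $N$ sufficiently large there is an identification of $\mathrm{Spin}^c$ structures under which
\[
d\bigl(S^3_N(K),[s]\bigr) = d\bigl(S^3_N(U),[s]\bigr) - 2V_s(K).
\]

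Next, suppose $F \subset B^4$ is a smooth, properly embedded, connected, orientable surface of genus $g = g_4(K)$ with $\partial F = K$. Performing $N$-surgery on $K$ and attaching the corresponding $2$-handle, the surface $F$ (after tubing it to a standard disk, and blowing up as needed) produces a negative-definite cobordism $W$ from $S^3$ to $S^3_N(K)$ whose intersection form is controlled by $g$ and $N$. Feeding $W$ into the Ozsv\'ath--Szab\'o $d$-invariant inequality for negative-definite cobordisms, while carefully tracking the restriction of $\mathrm{Spin}^c$ structures and the absolute gradings, forces $V_g(K) = 0$; by the definition of $\nu^+$ this yields $\nu^+(K) \le g = g_4(K)$. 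The corresponding bound for $\nu^-$ then follows from $\nu^+ = \nu^-$ (or, independently, by running the same argument for the mirror of $K$).

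The step I expect to be the main obstacle is the middle one: producing the correct negative-definite cobordism from the genus-$g$ surface and doing the $\mathrm{Spin}^c$ and grading bookkeeping precisely enough that the $d$-invariant inequality bites at exactly the level $s = g$, rather than at some weaker level. Unwinding the definition of $\nu^+$ and invoking $\nu^+ = \nu^-$ are routine given the cited references.
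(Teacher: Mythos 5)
The paper offers no proof of this proposition at all --- it simply cites Rasmussen and Ozsv\'ath--Szab\'o --- and your outline is a faithful reconstruction of the standard argument in those references: identify $\nu^{+}$ as the first vanishing index of the $V_{s}$ invariants, use the large-$N$ surgery formula for correction terms, and let a genus-$g_{4}(K)$ surface in the surgery trace force $V_{g_{4}(K)}=0$ via the negative-definite $d$-invariant inequality. Your plan is correct and consistent with the cited sources, though as you note it remains a sketch at the crucial $\mathrm{Spin}^c$/grading bookkeeping step.
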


Details about the relations between these concordance invariants can be found in a survey by Hom \cite{homsurvey}.

Introduced by Ozv\'ath, Stipsicz, and Szab\'o in \cite{OS1}, the Upsilon invariant is a piece-wise linear invariant in the variable $t \in [0, 2]$. Upsilon is indeed a concordance invariant, and provides a lower bound for the 4 genus of a knot $K$. 

\begin{thm}[Theorem 1.11 in \cite{OS1}]
    The invariants $\Upsilon_{K}(t)$ bound the slice genus of $K$, that is, for $0 \leq t \leq 1$, 
    $$ | \Upsilon_{K}(t)| \leq t \cdot g_{4}(K). $$
\end{thm}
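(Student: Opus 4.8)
The plan is to argue directly from the definition of $\Upsilon_{K}(t)$ via the $t$-modified knot Floer homology $tHFK^{-}(K)$ of Ozsv\'ath--Stipsicz--Szab\'o, reducing the inequality to a single cobordism estimate. Fix $t\in[0,1]$. Recall that $tHFK^{-}(K)$ is a finitely generated module over the ring $\mathcal{R}=\mathbb{F}[v^{\R_{\geq 0}}]$ of $v$-polynomials with non-negative real exponents (ground field $\mathbb{F}=\Z/2\Z$), carrying a $t$-modified Maslov grading in which the variable $v$ acts with a fixed negative degree. Its $v$-torsion-free quotient is a single tower, and $\Upsilon_{K}(t)$ is by definition (a normalization of) the maximal grading of a homogeneous generator of that tower. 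I would isolate two facts: the normalization $\Upsilon_{U}(t)=0$ for the unknot $U$, immediate since $tHFK^{-}(U)\cong\mathcal{R}$ is a free tower generated in grading $0$; and a functoriality statement, that a smooth connected oriented knot cobordism induces an $\mathcal{R}$-module map on $tHFK^{-}$ shifting the $t$-modified grading by a controlled amount.

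Given these, I would run the standard Floer-theoretic genus argument. Let $\Sigma\subset B^{4}$ be a smooth connected oriented surface with $\partial\Sigma=K$ and genus $g=g_{4}(K)$. Removing a small ball around an interior point of $\Sigma$ exhibits $(B^{4},\Sigma)$ as a cobordism $(S^{3}\times I,\widehat{\Sigma})$ from $K$ to $U$, with $\widehat{\Sigma}$ connected of genus $g$. Putting $\widehat{\Sigma}$ in Morse position with respect to the $I$-coordinate decomposes it into elementary cobordisms: births (index $0$), oriented saddles (index $1$), and deaths (index $2$). Since $\widehat{\Sigma}$ runs between two one-component knots and has genus $g$, the critical points can be accounted so that the births and deaths cancel against their paired merge/split saddles, leaving $2g$ genus-producing saddles as the only source of a nontrivial grading shift.

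The heart of the argument is to bound the $t$-modified grading shift across each elementary piece. Births and deaths realize the unit and counit for the new unknotted tensor factor, so a birth--saddle or saddle--death cancellation contributes nothing to the net shift. Each genus-producing saddle, on the other hand, shifts the tower generator's grading by at most $t$ in absolute value: this is exactly where the parameter enters, since the $t$-modified grading weights the Alexander filtration level by $t$ and a single oriented saddle moves that level by one, while the competing Maslov contribution is controlled for $t\le1$. I expect this step---proving the sharp per-saddle bound $|{\cdot}|\le t$ for a genus-one merge--split cobordism in the $t$-modified theory---to be the main obstacle, since it requires a genuine analysis of the cobordism maps on $tHFK^{-}$ and of how the $v$-action interacts with the saddle maps, rather than any formal bookkeeping.

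Summing the contributions, the composite cobordism map relates the tower generators of $tHFK^{-}(U)$ and $tHFK^{-}(K)$ with total grading change at most $tg$ in absolute value; running the cobordism in both directions gives the two-sided estimate
$$|\Upsilon_{K}(t)-\Upsilon_{U}(t)|\leq t\cdot g_{4}(K).$$
As $\Upsilon_{U}(t)=0$, this is precisely the asserted bound $|\Upsilon_{K}(t)|\le t\cdot g_{4}(K)$ for $0\le t\le1$. A sanity check at the endpoint $t\to0^{+}$ recovers the known relation $\Upsilon_{K}'(0)=-\tau(K)$ together with the $\tau$-bound $|\tau(K)|\le g_{4}(K)$ quoted above, and the linearity of the right-hand side in $t$ is consistent with the piecewise-linearity of $\Upsilon_{K}$.
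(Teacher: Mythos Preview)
The paper does not prove this theorem; it merely quotes it as Theorem~1.11 of \cite{OS1} in the survey portion of Section~\ref{section:background} and uses it as a black box. There is therefore no proof in the present paper to compare your proposal against.

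For what it is worth, your sketch is a plausible outline and close in spirit to the original argument: normalize at the unknot and control the change in the $t$-modified tower grading across a minimal-genus slice cobordism, decomposed into elementary pieces. The step you correctly flag as the crux---the per-saddle grading estimate in the $t$-modified theory---is exactly where the content lies; making it precise requires either the direct complex-level analysis carried out in \cite{OS1} or the later machinery of functorial cobordism maps on knot Floer homology. Nothing in your outline is wrong, but it remains a sketch rather than a proof until that step is filled in.
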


We then have a formula for the Whitehead double of knots in general, provided by Feller, Park, and Ray in \cite{satelliteUpsilon}. 

\begin{prop}[Proposition 1.5 in \cite{satelliteUpsilon}]\label{upsilonBounds}
    $$ \Upsilon_{\text{Wh}^{+}_{t}(K)}(t) =\begin{cases}
			 0 &\text{ if } t \geq 2 \tau(K)  \\
		   -1 + |1-t| &\text{ if } t < 2 \tau(K)  
		\end{cases}$$

  $$ \Upsilon_{\text{Wh}^{-}_{t}(K)}(t) =\begin{cases}
			 0 &\text{ if } t \leq 2 \tau(K)  \\
		   1 - |1-t| &\text{ if } t > 2 \tau(K)  
		\end{cases}$$
\end{prop}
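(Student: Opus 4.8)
The plan is to compute $\Upsilon$ directly from the full knot Floer complex $CFK^{\infty}(\text{Wh}^{+}_{t}(K))$, exploiting that Whitehead doubles have Seifert genus $1$ and that their knot Floer homology is already well understood. To lighten notation I write the argument of $\Upsilon$ as $s\in[0,2]$, reserving $t$ for the twisting parameter. First I would reduce to the positive case. Mirroring a knot reverses the sign of both the clasp and the twisting, so the mirror of $\text{Wh}^{+}_{t}(K)$ is $\text{Wh}^{-}_{-t}(\overline{K})$; combining the general relations $\Upsilon_{\overline{J}}(s) = -\Upsilon_{J}(s)$ and $\tau(\overline{K}) = -\tau(K)$, the negative formula follows immediately from the positive one, since the condition $-t \ge 2\tau(\overline{K})$ is exactly $t \le 2\tau(K)$ and the sign flip converts $-1+|1-s|$ into $1-|1-s|$. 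So it suffices to treat $\text{Wh}^{+}_{t}(K)$.

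Next I would assemble the complex. Because these knots have genus $1$, $CFK^{\infty}(\text{Wh}^{+}_{t}(K))$ is supported in Alexander gradings $-1,0,1$, which heavily constrains its filtered chain homotopy type. Building on Hedden's computation of $\widehat{HFK}(\text{Wh}^{+}_{t}(K))$ together with his value of $\tau$ (Theorem \ref{thm:tau}) and the value of $\epsilon$ (Theorem \ref{thm:epsilonHFK}), I would show that, up to summands irrelevant for $\Upsilon$ (acyclic \emph{box} complexes and, more generally, locally trivial pieces), the complex takes one of two shapes: for $t < 2\tau(K)$ its nontrivial part is the length-two staircase of the right-handed trefoil $T(2,3)$, while for $t \ge 2\tau(K)$ the distinguished generator of $\widehat{HF}(S^{3})$ sits at filtration level $0$.

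I would then read off $\Upsilon$ using the $t$-modified filtration of Ozsv\'ath--Stipsicz--Szab\'o. For $t < 2\tau(K)$ the trefoil staircase yields the standard computation $\Upsilon_{T(2,3)}(s) = -1 + |1-s|$, matching the claimed formula; for $t \ge 2\tau(K)$ the filtration-level-$0$ generator contributes $\Upsilon \equiv 0$. As a cross-check, the envelope $|\Upsilon(s)| \le \min(s,\,2-s)$ provided by the genus-$1$ bound $g_{4}(\text{Wh}^{+}_{t}(K))\le 1$ and the initial slope $-\tau(\text{Wh}^{+}_{t}(K))$ are both consistent with these answers. The negative case then follows by the mirroring step.

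The main obstacle is the passage from $\widehat{HFK}$ to the full bifiltered complex: $\Upsilon$ depends on $CFK^{\infty}$ with both filtrations, whereas Hedden records only $\widehat{HFK}$ and $\tau$. The genus-$1$ constraint together with $(\tau,\epsilon)$ must be shown to pin down the filtered homotopy type up to $\Upsilon$-irrelevant summands, and the delicate point is the regime $t \ge 2\tau(K)$: when $\tau(K)=0$ but $\epsilon(K)=1$ one has $\epsilon(\text{Wh}^{+}_{t}(K))=1$, so the complex is not locally trivial even though $\Upsilon \equiv 0$, and one must verify that this nontrivial local-equivalence class still has vanishing $\Upsilon$. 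The clean way to make the computation unconditional is bordered Floer homology: compute the type-$D$ module of the Whitehead pattern in the solid torus and pair it with the type-$A$ module of the companion exterior, producing $CFK^{\infty}(\text{Wh}^{+}_{t}(K))$ explicitly and reducing the theorem to a finite computation in each regime.
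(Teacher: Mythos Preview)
The paper does not contain a proof of this proposition. It is stated purely as a citation of Proposition~1.5 from Feller--Park--Ray \cite{satelliteUpsilon}, introduced with the sentence ``We then have a formula for the Whitehead double of knots in general, provided by Feller, Park, and Ray,'' and no argument is given. There is therefore no proof in the paper to compare your proposal against.

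As for your sketch on its own terms: the mirror reduction to the positive case is correct, and the broad strategy---leverage the genus-$1$ constraint together with Hedden's $\tau$ and the $\epsilon$ computation to pin down $CFK^{\infty}$ up to $\Upsilon$-irrelevant summands---is the natural one and is close in spirit to how such results are typically established. You have correctly flagged the genuine subtlety, namely that in the regime $t\ge 2\tau(K)$ with $\epsilon(K)\ne 0$ the complex is not locally trivial and one must still argue $\Upsilon\equiv 0$; this step is not automatic from $(\tau,\epsilon)$ alone and is exactly where a rigorous proof has to do real work (either via an explicit bordered computation of $CFK^{\infty}(\text{Wh}^{+}_{t}(K))$, as you suggest, or via the immersed-curve reformulation). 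Your proposal is an honest outline rather than a proof, but nothing in it is wrong---it simply goes well beyond what the present paper attempts, since the paper treats the formula as a black box.
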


\begin{rem}
Solving whether or not the Whitehead double of the figure 8 knot is proving to be quite difficult, as from Remark \ref{rem:whdclassicalinv} we know the classical knot invariants vanish, and now we see the Floer invariants vanish as well. From Proposition \ref{upsilonBounds}, we have that $\Upsilon_{\text{Wh}(4_{1})}(t) \equiv 0$. From Theorem \ref{thm:epsilonHFK}, we have that $\epsilon (\text{Wh}(4_{1})) = 0$. From Theorem \ref{thm:tau}, we have $\tau (\text{Wh}(4_{1})) = 0$. 
\end{rem}

For an in-depth discussion on the Whitehead double pattern in concordance with a gauge theory approach, see Hedden and Paul \cite{instantons}.  

\subsection{A 4-Manifold Obstruction}

We begin this discussion with defining the \textit{knot trace} as the 4-manifold $\xrk$ obtained by attaching a 2-handle to $B^{4}$ along a knot $K$ with framing $r$. For a formal background and introduction to 4-manifolds, we refer to the standard text by Gompf and Stipsicz \cite{gompfstipsicz}. 

Originally discussed by Kirby and Melvin in \cite{KirbyMelvin}, and later re-stated and proved by Miller and Piccirillo \cite{milpic}, we have a slicing obstruction from 4-manifold theory. 

\begin{thm}[\cite{KirbyMelvin}, \cite{milpic}]
    $K$ is smoothly slice if and only if $X_{0}(K)$ smoothly embeds in $S^{4}$. 
\end{thm}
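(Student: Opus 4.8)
The plan is to prove this \emph{trace embedding lemma} as two separate implications, using that a slice disk for $K$ and the $0$-framed $2$-handle along $K$ are dual to one another once one writes $S^4 = B^4 \cup_{S^3} B^4$.

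For the direction ``slice $\Rightarrow$ trace embeds'', I would start from a smoothly and properly embedded slice disk $D \subset B^4$ with $\partial D = K$. Write $S^4 = B^4_+ \cup_{S^3} B^4_-$ and regard $D \subset B^4_-$. Take a closed tubular neighborhood $\nu(D) \subset B^4_-$: since $D$ is a disk its normal $D^2$-bundle is trivial, so $\nu(D) \cong D^2 \times D^2$, and $\nu(D) \cap S^3$ is a tubular neighborhood $n(K)$ of $K$. The one computation that needs care is that the framing of $K$ induced by $\nu(D)$ is the $0$-framing: a parallel pushoff $K'$ of $K$ in that framing bounds a parallel copy $D'$ of $D$ disjoint from $D$, so $\operatorname{lk}_{S^3}(K,K') = D\cdot D' = 0$. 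Granting this, $B^4_+ \cup \nu(D)$ --- built from $B^4_+$ by gluing $D^2\times D^2$ along $n(K)$ with the $0$-framing --- is diffeomorphic to $X_0(K)$, and it sits inside $B^4_+ \cup B^4_- = S^4$ as required.

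For the direction ``trace embeds $\Rightarrow$ slice'', I would take a smooth embedding $\phi\colon X_0(K)\hookrightarrow S^4$ and use the handle description $X_0(K) = B^4 \cup_{n(K)} h$, where $h\cong D^2\times D^2$ is the $0$-framed $2$-handle. Its core disk $C = D^2\times\{0\}$ has $\partial C = K \subset \partial B^4 = S^3$ and meets $B^4$ only in $K$, so $\phi(C)$ is a smoothly and properly embedded disk in $W := \overline{S^4\setminus\phi(B^4)}$, which is a compact smooth $4$-manifold with $\partial W\cong S^3$, and $\partial\phi(C)$ is a copy of $K$ in $\partial W$. Since $\phi(B^4)$ is a smoothly embedded $4$-ball in $S^4$, the disk theorem (every smoothly embedded $D^4$ in a connected $4$-manifold is ambiently isotopic to a standard chart) lets me assume it is standard; then $W\cong B^4$, and $\phi(C)$ exhibits $K$ (up to mirroring, which does not change sliceness) as the boundary of a smooth disk in $B^4$.

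I expect the real content to be the framing computation in the first implication: it is precisely what forces the output to be $X_0(K)$ and not some $X_r(K)$ with $r\neq 0$ --- consistently, for $r\neq 0$ a Mayer--Vietoris argument rules out $X_r(K) \hookrightarrow S^4$, since the restriction map $H_2(\partial X_r(K)) \to H_2(X_r(K))\cong\Z$ would need to be onto while $\partial X_r(K) = S^3_r(K)$ is a rational homology sphere. The step most liable to cause worry is the invocation of the disk theorem in the second implication; I would emphasize that this is legitimate and is not the (still open) smooth four-dimensional Schoenflies problem, because the three-sphere $\phi(\partial B^4)\subset S^4$ carries the extra information that it bounds an embedded $4$-ball on one side.
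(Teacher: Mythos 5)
Your argument is correct, and it is the standard proof of the trace embedding lemma; the paper itself states this result without proof, citing Kirby--Melvin and Miller--Piccirillo, and your two implications (tubular neighborhood of the slice disk giving the $0$-framed handle, and the cocore/core disk in the complement of the embedded $B^{4}$) are exactly the argument in those references. You correctly isolate the two points that need care: the linking-number computation showing the induced framing is $0$, and the appeal to uniqueness of smoothly embedded $4$-balls (Palais--Cerf) rather than to the open smooth Schoenflies problem.
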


This theorem motivates obstructing knots from being smoothly slice by using diffeomorphisms of knot traces. Given a knot $K$, if one finds a diffeomorphism of $X_{r}(K)$ to $X_{r}(J)$, where $J$ is a different knot that is not slice, then one may conclude that $K$ is not slice. 

Define the shake genus $g_{sh}^{r}(K)$ to be the minimum genus of a surface $S$ smoothly embedded in $\xrk$ so that $S$ is a generator of $H_{2}(\xrk ; \Z) \cong \Z$, and note that $g_{sh}^{r}(K) \leq g_{4}(K)$.  Piccirillo in \cite{piccirillo} shows that the smooth 4 genus is not 0-trace invariant, that is, there exist knots $K$ where $g_{sh}^{0}(K) < g_{4}(K)$. Also, Piccirillo \cite{piccirillo} uses Rasmussen's $s$-invariant, presented by Rasmussen in \cite{rasmSinv}, and shows that the $s$-invariant is not a 0-trace invariant. We do not define the $s$-invariant, but note that it is a concordance invariant and offers a lower bound on the smooth 4-genus.

The Conway knot problem was similar to the Whitehead double of the figure 8 problem, where all the classical invariants vanish, and it has trivial Alexander polynomial, so it was known to be topologically slice but questioned whether or not to be smoothly slice. Then, these 4-manifold techniques were used to show the Conway knot is not smoothly slice by Piccirillo in \cite{conwayknot}. Constructing diffeomorphic knot traces is a difficult task, but possible using 4-manifold handle slides, such as Akbulut did in \cite{akbulut1}, annulus twisting by Abe, Jong, Omae, and Takeuchi \cite{annulustwist}, or Piccirillo's RGB link construction \cite{piccirillo}.

As all previously mentioned invariants vanish for the Whitehead double of the figure 8 knot, the 4-manifold strategy seems to be the last remaining attempt to show an obstruction to sliceness. We were hoping to find a new obstruction, coming from non-orientable surfaces, however no such obstruction presented itself for the untwisted Whitehead double.  

We conclude this section by mentioning that recently Dai, Kang, Mallick, Park and Stoffregen showed the $(2, 1)$-cable of the figure 8 knot is not smoothly slice by showing its double branched cover does not bound an equivariant ball \cite{fig8cable}. We will not go into the construction or techniques, but mention this result as it is relevant to this paper.   

\section{NON-ORIENTABLE COBORDISMS}\label{section:NOcobords}

Recall that the non-orientable 4 genus of a knot $K$ is defined as the minimum first Betti number among all non-orientable surfaces $F$ smoothly embedded in $B^{4}$ bounded by $K$,
$$\gamma_{4}(K) = \{ \min b_{1}(F) \hspace{1mm} | \hspace{1mm} F \text{ is non-orientable, } \partial F = K, F \subset B^{4}  \}. $$

The property of non-orientable sliceness is when $\gamma_{4}(K) = 1$, a knot in $S^{3}$ bounds a smoothly embedded M\"obius band in $B^{4}$. We explore the upper bound from the 4-genus:
\begin{equation}\label{eqtn:4genusupperbound}
    \gamma_{4}(K) \leq 2 g_{4}(K) +1
\end{equation}
Given that a knot bounds some orientable surface $S \subset B^{4}$, and one may simply construct the connect sum $S \# \rp$, we obtain the upper bound in equation \ref{eqtn:4genusupperbound}. Many knots do not meet this upper bound, for example torus knots of the form $T_{2, q}$ all bound M\"obius bands \cite{Allen}, however have $g_{4}(T_{2, q}) = (q-1)/2$. Note that the orientable 4 genus of torus knots is given by the Milnor conjecture, which was proved by Kronheimer and Mrowka in \cite{milnorconj}. The Milnor conjecture states that the smooth 4 genus of a torus knot $T_{p, q}$ is $\frac{1}{2}(p-1)(q-1)$. On the other extreme of equation \ref{eqtn:4genusupperbound}, there are knots such as the $8_{18}$ knot in the Rolfsen knot table where $g_{4}(8_{18}) = 1$ and $\gamma_{4}(8_{18}) = 3 = 2g_{4}(8_{18})+1$. Thus, equation \ref{eqtn:4genusupperbound} is not a particularly useful upper bound for those attempting calculations of non-orientable 4 genus, however we have hope that it could offer an obstruction to sliceness. 

\begin{rem}
    If a knot $K$ does not bound a smoothly embedded M\"obius band, then $K$ is not smoothly slice. 
\end{rem}

We now define a \textit{cobordism} between two knots $K$ and $J$ as a surface $S$ in $S^{3} \times [0, 1]$ with two boundary components, $K$ in $S^{3} \times \{ 0\}$ and $-J$ in $S^{3} \times \{1\}$. We make quick remarks about orientation, as the convention is to say $K$ is cobordant to $J$, but the cobordism itself starts at $K$ in $S^{3} \times \{0\}$ and ends with $-J$ in $S^{3} \times \{1\}$, where $-J$ is the concordance inverse of $J$, which is the mirror and orientation reverse of $J$. $S^{3} \times \{0\}$ inherits the standard orientation from $S^{3}$, while $S^{3} \times \{1\}$ has the opposite orientation. The surface $S$ in $S^{3} \times [0, 1]$ may or may not be orientable. 

It is common to hear of a cobordism referred to as a surface interpolating between two knots. This surface has some genus, $g$, and if the surface is orientable with genus 0 it is called a \textit{concordance}. Through this lens, we say slice knots are those that are concordant to the unknot. If we allow the surface to be non-orientable, we have a \textit{non-orientable cobordism} between 2 knots. The genus 1 non-orientable cobordisms can be found directly by using \textit{non-orientable band moves}, as outlined below.

To perform a non-orientable band move, first one must orient a knot $K$, then attach an oriented band $B = [0, 1] \times [0, 1]$, where $B$ inherits its orientation from its boundary, to $K$ in such a way that $[0, 1] \times \{0\}$ agrees with the orientation of $K$ and $[0, 1] \times \{1\}$ disagrees with the orientation of $K$ (or vise-versa). One then performs band surgery, removing the arcs $[0,1] \times \{ 0 \}$ and $[0, 1] \times \{1 \}$. This process is depicted in Figure \ref{fig:bandmoves}. Note that after performing a non-orientable band move, we end up with an unoriented knot. This is in contrast with orientable band moves, which transform a knot into a link.

\begin{figure}[h]
    \centering
    \includegraphics[width=0.7\linewidth]{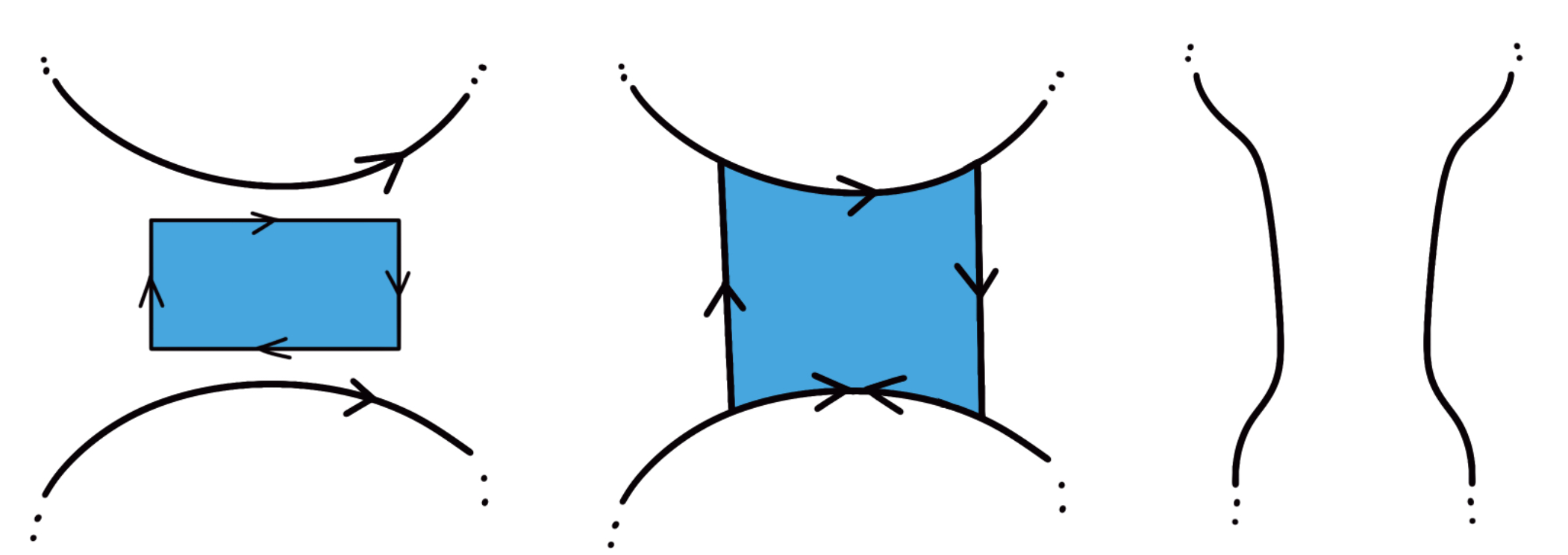}
    \caption{Non-Orientable Band Move}
    \label{fig:bandmoves}
\end{figure}

We then have the following proposition from Jabuka and Kelly. 

\begin{prop}[Proposition 2.4 in \cite{JK}]\label{BMbound}

If the knots $K$ and $J$ are related by a non-oriented band move, then 
$$ \gamma_{4}(K) \leq \gamma_{4}(J) + 1 $$

If a knot $K$ is related to a slice knot $J$ by a non-oriented band move, then $\gamma_{4}(K) = 1$.
    
\end{prop}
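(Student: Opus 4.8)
The plan is to prove both claims of Proposition~\ref{BMbound} by directly building non-orientable surfaces in $B^{4}$ out of the data of the band move. First I would recall the key geometric fact: a non-oriented band move realizing $K \rightsquigarrow J$ in $S^{3}$ traces out a non-orientable cobordism $C \subset S^{3}\times[0,1]$ from $K$ to $J$ with $b_{1}(C)=1$ (it is a once-punctured M\"obius band — topologically an annulus with a half-twisted band attached, so a non-orientable surface whose only homology comes from the core of the band). Concretely, one takes $K\times[0,\tfrac12]$, attaches the band $B$ at the level $S^{3}\times\{\tfrac12\}$, and then takes $J\times[\tfrac12,1]$; since the band was attached with the orientation-reversing gluing described before the statement, the resulting surface is non-orientable and has first Betti number exactly one more than that of a product cobordism, i.e. $b_{1}(C)=1$.

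For the first inequality, suppose $F \subset B^{4}$ is a non-orientable surface with $\partial F = J$ realizing $\gamma_{4}(J)=b_{1}(F)$. Glue $C$ to $F$ along $J$: viewing $B^{4}$ as $B^{4}$ with a collar $S^{3}\times[0,1]$ attached to its boundary (which is again diffeomorphic to $B^{4}$), the union $F' = F \cup_{J} C$ is a surface smoothly embedded in $B^{4}$ with $\partial F' = K$. It is non-orientable because $C$ is. For the Betti number I would use the Mayer--Vietoris / Euler characteristic bookkeeping for gluing two surfaces along a single circle: $\chi(F') = \chi(F) + \chi(C) - \chi(S^{1}) = \chi(F) + \chi(C)$, and since $C$ has one boundary circle on each side, $\chi(C) = 1 - b_{1}(C) = 0$... here I must be careful: $C$ has two boundary components, so $\chi(C) = 2 - 1 - b_{1}(C)$ in the closed-up count; working instead with $b_{1}$ directly via Mayer--Vietoris on $H_{1}$, gluing along a connected curve adds at most $b_{1}(C) = 1$ to $b_{1}$, giving $b_{1}(F') \leq b_{1}(F) + 1$. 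Hence $\gamma_{4}(K) \leq b_{1}(F') \leq \gamma_{4}(J) + 1$.

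For the second claim, suppose $J$ is slice, so $J$ bounds a smoothly embedded disk $D \subset B^{4}$ with $b_{1}(D)=0$. Running the same construction with $F = D$ produces a non-orientable surface $D' = D \cup_{J} C \subset B^{4}$ with $\partial D' = K$ and $b_{1}(D') \leq 0 + 1 = 1$. Since $D'$ is non-orientable, $b_{1}(D') \geq 1$, so $b_{1}(D') = 1$ and thus $\gamma_{4}(K) \leq 1$. But $K$ is non-orientably slice only if it bounds \emph{some} non-orientable surface, and the preceding remark shows a knot with $\gamma_{4}(K)=0$ would have to be smoothly slice and bound no non-orientable surface at all; in any case $\gamma_{4}(K)$ is by definition a minimum over non-orientable surfaces, and we have exhibited one with $b_{1}=1$, so $\gamma_{4}(K)=1$. (Strictly, one should note $\gamma_{4}(K)\ge 1$ always holds when the minimum is taken over \emph{non-orientable} surfaces, since a non-orientable surface has $b_{1}\ge 1$.)

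The main obstacle — really the only subtle point — is the careful verification that the cobordism $C$ arising from a non-oriented band move is genuinely non-orientable and has $b_{1}(C)=1$, and that gluing it onto a non-orientable $F$ keeps the result non-orientable while adding exactly (at most) one to the first Betti number. Everything else is a standard collar-and-glue argument plus elementary Mayer--Vietoris. I would make the non-orientability explicit by exhibiting an orientation-reversing loop on $C$: the core of the attached band, concatenated with an arc in $K\times[0,\tfrac12]$, closes up to a circle along which a normal framing fails to extend, precisely because the two ends of the band were attached with opposite induced orientations. This is exactly the local picture in Figure~\ref{fig:bandmoves}, so the argument can be kept short.
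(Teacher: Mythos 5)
Your construction is correct in outline and is the standard one (trace of the band move glued onto a surface for $J$ inside a collared $B^{4}$); note that the paper itself imports this proposition from Jabuka--Kelly without proof, so there is no in-paper argument to compare against. The one point you must repair is the count $b_{1}(C)=1$: the trace $C$ of a single non-oriented band move is a connected surface with \emph{two} boundary components and $\chi(C)=-1$ (an annulus with one band attached), hence $b_{1}(C)=1-\chi(C)=2$; it is a once-punctured M\"obius band, and indeed your own phrase ``one more than a product cobordism'' already forces $b_{1}=2$, since the product annulus has $b_{1}=1$. With the correct value, the crude Mayer--Vietoris estimate only yields $b_{1}(F')\le b_{1}(F)+2$, so you either need the extra observation that $[J]$ is nonzero in $H_{1}(C;\Q)$ (true: each boundary circle of a punctured M\"obius band is rationally nontrivial, so the connecting map kills one generator), or, more cleanly, bypass $b_{1}(C)$ altogether and compute $\chi(F')=\chi(F)+\chi(C)=\chi(F)-1$, whence $b_{1}(F')=1-\chi(F')=b_{1}(F)+1$ exactly, because $F'$ is connected with a single boundary component. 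Incidentally, the parity of $\chi(C)=-1$ with two boundary circles already forces $C$ to be non-orientable (an orientable surface with two boundary components has even Euler characteristic), which is a quicker route than exhibiting an orientation-reversing loop. The remaining steps --- non-orientability of $F'$ inherited from $C$, the collar identification of $B^{4}\cup(S^{3}\times[0,1])$ with $B^{4}$, and $\gamma_{4}(K)\ge 1$ by definition in the slice case --- are all sound.
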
 

We conclude this section by providing an example of a non-orientable band move from the figure 8 knot to the trefoil, Figure \ref{fig:exbandmoves}. 

\begin{figure}[h]
    \centering
    \includegraphics[width=0.7\linewidth]{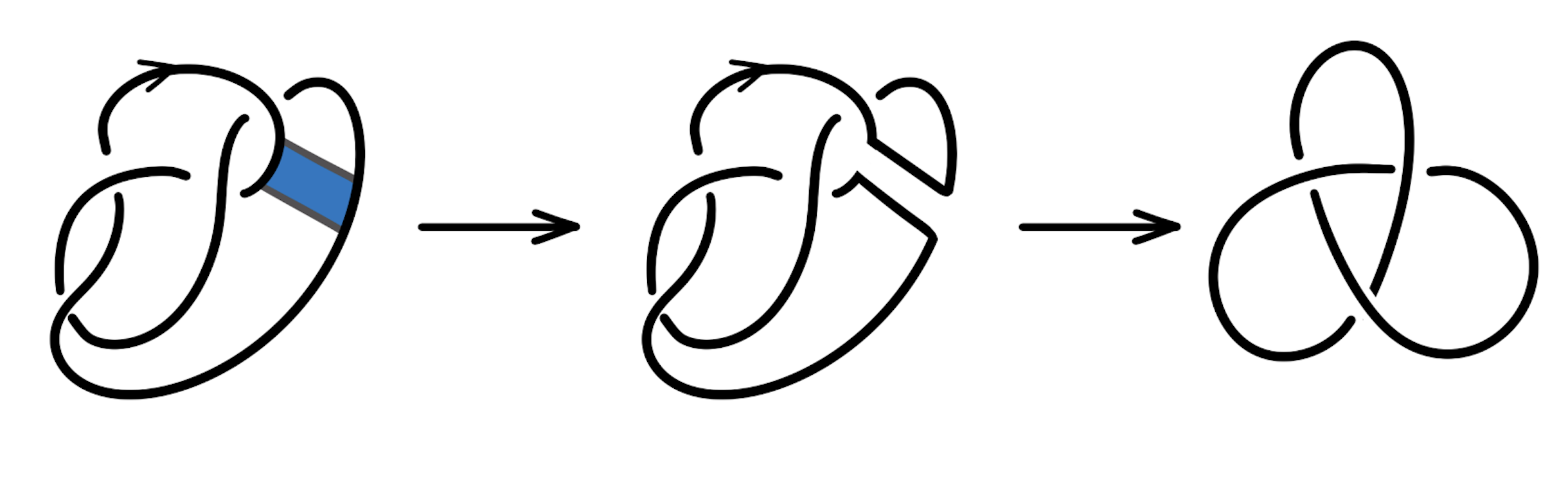}
    \caption{Non-Orientable Band Move from the figure 8 knot to the trefoil}
    \label{fig:exbandmoves}
\end{figure}

\subsection{Cables and more Knot Floer Homology}

 A knot cable is a specific type of satellite knot, where the pattern knot $P$ is a torus knot, $T_{p,q}$. When we construct a cable of a knot, it is called the $(p, q)$-cable of $K$ and denoted $K_{p, q}$. We again identify the longitude of the solid torus containing $P$ with the Seifert framing of $K$.

\begin{prop}
    Given a knot $K$, there exists an odd $q \in \Z$ so that there is a genus 1 non-orientable cobordism between $\text{Wh}^{\pm}_{t}(K)$ and $K_{2, q}$.  
\end{prop}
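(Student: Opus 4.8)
The plan is to realize both $\text{Wh}^{\pm}_{t}(K)$ and the cable $K_{2,q}$ inside a common solid torus as satellites of $K$ with the same framing, and then do all the work at the level of the patterns in $S^1\times D^2$. That is, I want to exhibit a non-orientable band move (in the sense of Figure \ref{fig:bandmoves}) from the $t$-twisted Whitehead double pattern to the $(2,q)$-torus knot pattern, for a suitable odd $q$, and carried out entirely within the unknotted solid torus $S^1\times D^2$. Since the companion $K$ and the identification of the longitude with the Seifert framing $\lambda$ are the same for both satellites, a band move between the patterns that stays inside $n(P)$'s complementary solid torus induces a band move between $\text{Wh}^{\pm}_{t}(K)$ and $K_{2,q}$ in $S^3$, with the band disjoint from the satellite torus; so the cobordism it traces out in $S^3\times[0,1]$ is genus $1$ and non-orientable exactly when the pattern-level band move is. Proposition \ref{BMbound} is not literally needed for the conclusion, but the band-move technology behind it is the main tool.

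The key observation at the pattern level is that the Whitehead double pattern is a clasp: it is an unknot in $S^1\times D^2$ with winding number $0$, built from two oppositely-oriented strands running around the longitude together with a clasp and $t$ full twists. If one resolves the clasp with an oriented band move one gets a $2$-component unlink pattern; the point here is to resolve it instead with a \emph{non-orientable} band move. Concretely, the two strands of the doubled pattern are anti-parallel, so a band attached across them between the two clasp arcs automatically satisfies the orientation-mismatch condition defining a non-orientable band move. After the surgery the two strands are merged into a single strand that now winds around the longitude twice; tracking the clasp and the $t$ twists shows the resulting pattern is the $(2,q)$-torus pattern for $q$ determined by $t$ (and the writhe conventions of Figure \ref{fig:whdTpattern}), with the sign of the clasp and the sign of $t$ controlling whether $q$ is positive or negative. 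I expect $q = 2t\pm 1$ or thereabouts, and in any case $q$ is odd because the pattern is a knot (winding number $2$ forces the torus parameter coprime to $2$).

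So the steps, in order, are: (1) draw the $t$-twisted Whitehead double pattern explicitly as an anti-parallel $2$-strand clasp with $t$ twists inside $S^1\times D^2$, fixing orientations; (2) attach a band connecting the two anti-parallel strands across the clasp region, verify the orientation-mismatch condition so that this is a genuine non-orientable band move, and check that the band lies inside the solid torus; (3) perform the band surgery and simplify the resulting pattern, using Reidemeister moves and handle-slide-type isotopies within the solid torus, to identify it as $T_{2,q}$ for the appropriate odd $q$ (doing the two cases $\text{Wh}^+$ and $\text{Wh}^-$, and tracking how $q$ depends on $t$ and the framing $\lambda$); (4) observe that a single non-orientable band move produces a non-orientable cobordism of first Betti number $1$, i.e.\ genus $1$ in the non-orientable sense, and that satelliting with $(K,\lambda)$ preserves this, giving the genus $1$ non-orientable cobordism between $\text{Wh}^{\pm}_{t}(K)$ and $K_{2,q}$ in $S^3\times[0,1]$.

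The main obstacle I anticipate is bookkeeping rather than any deep difficulty: getting the band attached in the correct spot so that the surgered pattern is \emph{exactly} a torus-knot pattern and not some more complicated winding-number-$2$ pattern (a cable of a cable, or a twisted torus pattern), and pinning down the precise value of $q$ as a function of $t$ together with the sign constraints that force $t>0$ or $t<0$ to match the $+$ or $-$ case — this is where the writhe and framing conventions of Figure \ref{fig:whdTpattern} have to be handled with care. A secondary point to be careful about is confirming that the non-orientability of the resulting surface genuinely comes out (one must check the band move is non-orientable and not secretly resolvable as an oriented saddle), but since the two doubled strands are anti-parallel this should be automatic; the claim that \emph{some} odd $q$ works is robust even if the exact formula needs adjustment.
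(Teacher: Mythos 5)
Your proposal is correct and is essentially the paper's own argument: the paper likewise performs a single non-orientable band move across the clasp of the Whitehead double, merging the two anti-parallel strands into a winding-number-two curve that is the $(2,q)$-torus pattern, with $q$ determined by $t$ and the Seifert framing. Your additional remarks (carrying out the move at the pattern level inside the solid torus, and noting that $q$ must be odd since the result is a knot of winding number two) only make explicit what the paper leaves to its figure.
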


\begin{proof}
    Begin with an oriented knot $K$ and its Whitehead double. Performing a non-oriented band move on the clasp of the Whitehead double results in $K_{2, q}$ where $q$ depends on $t$ and the Seifert framing of $K$. See Figure \ref{fig:whdto21cable}. 
\end{proof}

\begin{figure}[h]
    \centering
    \includegraphics[width=4.5in]{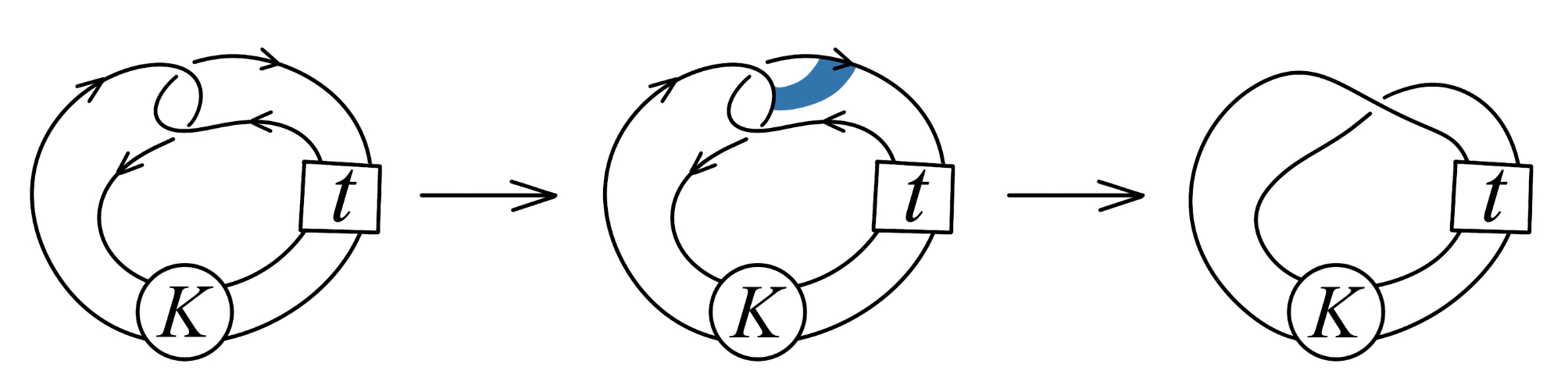}
    \caption{Non-Orientable Band Move}
    \label{fig:whdto21cable}
\end{figure}

To complete the proof of Proposition \ref{prop:no4gWhd} we notice that $\gamma_{4}(K_{2, q}) = 1$ for all $q$ and thus one non-orientable band move completes the notion that $\gamma_{4} ( \text{Wh}^{\pm}_{t}(K)) \leq 2$ according to Proposition \ref{BMbound}. For example, there is one non-orientable band move from $\text{Wh}^{+}(4_{1})$ to the $(2, 1)$-cable of the figure 8 knot, which was shown to not be smoothly slice in \cite{fig8cable}. 

Following the notational conventions of Ozv\'ath, Stipcisz, and Szab\'o in \cite{OSSnonorient}, we denote $\upsilon(K) = \Upsilon_{K}(1)$, the lower case Upsilon for when $t=1$. We now wish to examine consequences of the Upsilon invariant with this non-orientable cobordism from a Whitehead double to a cable. We re-state the proposition from Feller, Park, and Ray in \cite{satelliteUpsilon} for clarity when $t=1$.

\begin{prop}[Proposition 1.5 in \cite{satelliteUpsilon}]\label{prop:upsWHD}
    $$ \upsilon( \text{Wh}^{+}_{t}(K)) =\begin{cases}
			 0 &\text{ if } t \geq 2 \tau(K)  \\
		   -1 &\text{ if } t < 2 \tau(K)  
		\end{cases}$$

  $$ \upsilon( \text{Wh}^{-}_{t}(K)) =\begin{cases}
			 0 &\text{ if } t \leq 2 \tau(K)  \\
		   1 &\text{ if } t > 2 \tau(K)  
		\end{cases}$$
\end{prop}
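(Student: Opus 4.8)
The plan is to obtain Proposition~\ref{prop:upsWHD} as an immediate consequence of the full piecewise-linear formula of Feller, Park, and Ray, of which Proposition~\ref{upsilonBounds} records only one specialization. Recall that $\upsilon(K)$ is by definition the value of the concordance invariant $\Upsilon_{K}$ at the interior point $1 \in [0,2]$. The key input is that \cite{satelliteUpsilon} computes the entire function $s \mapsto \Upsilon_{\whd}(s)$ on $[0,2]$, and that this function depends on the companion $K$ and on the twisting parameter $t$ only through the sign of $t - 2\tau(K)$: when $t \geq 2\tau(K)$ one has $\Upsilon_{\whd} \equiv 0$, while when $t < 2\tau(K)$ one has $\Upsilon_{\whd}(s) = -1 + |1-s|$ for all $s$, i.e. $\text{Wh}^{+}_{t}(K)$ carries the Upsilon function of the right-handed trefoil. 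Symmetrically, $\Upsilon_{\text{Wh}^{-}_{t}(K)} \equiv 0$ when $t \leq 2\tau(K)$ and equals $1 - |1-s|$ when $t > 2\tau(K)$. This dichotomy is consistent with Hedden's computation of $\tau(\whd)$ in Theorem~\ref{thm:tau}, which takes only the values $0$ and $1$.

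With that formula in hand, the proof is a one-point evaluation. First I would set $s = 1$ in the positive case: if $t \geq 2\tau(K)$ then $\upsilon(\whd) = \Upsilon_{\whd}(1) = 0$, and if $t < 2\tau(K)$ then $\upsilon(\whd) = -1 + |1-1| = -1$, since $|1-1| = 0$. Then I would repeat the evaluation in the negative case: $\upsilon(\text{Wh}^{-}_{t}(K)) = 0$ when $t \leq 2\tau(K)$, and $\upsilon(\text{Wh}^{-}_{t}(K)) = 1 - |1-1| = 1$ when $t > 2\tau(K)$. This is exactly the statement of Proposition~\ref{prop:upsWHD}.

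There is essentially no analytic content to overcome here; the only point that needs care — and the reason it is worth recording this specialization separately — is purely notational. The twisting parameter of the Whitehead double and the evaluation parameter of the Upsilon invariant are both traditionally written $t$, so when reading the formula in Proposition~\ref{upsilonBounds} one must be scrupulous about which ``$t$'' is being substituted where; the cusp of the piecewise-linear function sits at evaluation-parameter $1$, and it is precisely this value one plugs in to define $\upsilon$. Once that bookkeeping is pinned down the claim follows at once. As a sanity check one may verify the endpoint normalization $\upsilon(U)=0$ for the unknot and confirm that the jump in $\upsilon(\whd)$ occurs exactly across the threshold $t = 2\tau(K)$; this is the form of the invariant that will be combined with the non-orientable genus bounds of \cite{OSSnonorient} in the sequel.
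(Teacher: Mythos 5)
Your proposal is correct and matches what the paper does: Proposition~\ref{prop:upsWHD} is simply the specialization of the Feller--Park--Ray formula (recorded as Proposition~\ref{upsilonBounds}) at evaluation parameter $1$, giving $-1+|1-1|=-1$ and $1-|1-1|=1$ in the two nontrivial cases. Your remark about disambiguating the twisting parameter from the Upsilon evaluation parameter, both written $t$, is exactly the one point of care the paper's restatement is meant to address.
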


\begin{thm}[Theorem 1.3 in \cite{upsCables}]
    Let $K$ in $S^{3}$ be a knot, and $(p, q)$ be a pair of relatively prime numbers such that $p > 0$. Then, 
    $$ \Upsilon_{K}(pt) - \dfrac{(p-1)(q+1)t}{2} \leq \Upsilon_{K_{p, q}}(t) \leq \Upsilon_{K}(pt) - \dfrac{(p-1)(q-1)t}{2} $$
    when $0 \leq t \leq \frac{2}{p}$.
\end{thm}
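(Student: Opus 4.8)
The plan is to deduce both estimates from the structure of the knot Floer complex of a cable relative to that of its companion, together with the behavior of $\Upsilon$ under mirroring. Recall that $-\Upsilon_K(t)$ can be read off from $CFK^{\infty}(K)$ — equivalently, from the $t$-modified complex $tCFK^{-}(K)$ of Ozsv\'ath--Stipsicz--Szab\'o — as the minimal $t$-grading of a cycle generating the $t$-modified homology, so it suffices to understand enough of $CFK^{\infty}(K_{p,q})$ to control this quantity for $0 \le t \le 2/p$. Moreover, since $\overline{K_{p,q}} = (\overline{K})_{p,-q}$ and $\Upsilon_{\overline{K}}(t) = -\Upsilon_K(t)$, the lower bound for all admissible $(p,q)$ is equivalent to the upper bound for all admissible $(p,q)$, now allowing $q$ of either sign; so it is enough to prove only the upper inequality, in general.

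The main tool is a cabling formula for knot Floer homology. Because $K_{p,q}$ lies on $\partial n(K)$, it is governed by $K$ through the Ozsv\'ath--Szab\'o rational surgery (mapping cone) formula: enough of $CFK^{\infty}(K_{p,q})$ to see $\Upsilon_{K_{p,q}}$ on $0 \le t \le 2/p$ can be assembled from reparametrized, grading-shifted copies of the large-surgery complexes $\widehat{A}_j(K)$ of the companion. Two features of this assembly produce the two features of the statement: the reparametrization rescales Alexander gradings by a factor of $p$, which is exactly the source of the substitution $t \mapsto pt$; and the interval $0 \le t \le 2/p$ (that is, $pt \in [0,2]$) is precisely the range on which a single companion complex contributes, so no dilution by a ``minimum over several shifts'' occurs. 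The framing $q$, together with the genus $\tfrac{(p-1)(q-1)}{2}$ of the pattern torus knot $T_{p,q}$, contributes an overall linear grading shift, which is the origin of the $\tfrac{(p-1)(q-1)t}{2}$ correction.

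Granting this, the upper bound is obtained constructively: starting from a $t$-grading--minimizing cycle for $K$ — of $t$-grading $-\Upsilon_K(pt)$ after the $p$-fold reparametrization — one produces from it an explicit cycle representing a generator of $tHF(K_{p,q})$ whose $t$-grading exceeds the minimizer by exactly $\tfrac{(p-1)(q-1)t}{2}$, the ``most efficient'' way to cable it, realized when the companion complex is locally a staircase. This gives $\Upsilon_{K_{p,q}}(t) \le \Upsilon_K(pt) - \tfrac{(p-1)(q-1)t}{2}$, and it cannot be improved in general since for $K=U$ we have $K_{p,q} = T_{p,q}$ with $\Upsilon_{T_{p,q}}(t) = -\tfrac{(p-1)(q-1)t}{2}$ on $[0,2/p]$. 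The lower bound then follows at once from the mirroring reduction: $\Upsilon_{K_{p,q}}(t) = -\Upsilon_{(\overline{K})_{p,-q}}(t) \ge -\bigl(\Upsilon_{\overline{K}}(pt) - \tfrac{(p-1)(-q-1)t}{2}\bigr) = \Upsilon_K(pt) - \tfrac{(p-1)(q+1)t}{2}$.

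I expect the crux to be making the cabling formula precise at the chain level with the exact grading shifts — in particular, verifying that a single companion complex governs $\Upsilon_{K_{p,q}}$ across the \emph{entire} interval $0 \le t \le 2/p$, so that the bound is not weakened near its right endpoint, and carefully tracking the interplay of the $U$-power, the Alexander filtration, and the $t$-weighting under the $p$-fold reparametrization. Once that computation is in place, the constructive upper bound and the mirroring argument for the lower bound are routine bookkeeping.
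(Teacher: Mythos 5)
A point of order first: the paper does not prove this statement. It is imported verbatim as Theorem 1.3 of the cited reference \cite{upsCables}, so there is no in-paper argument to compare yours against; your proposal is an attempt to reprove an external result.

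Within your proposal, the mirroring reduction is correct and standard: since $m(K_{p,q}) = m(K)_{p,-q}$, $\Upsilon_{m(J)}(t) = -\Upsilon_{J}(t)$, and the hypotheses place no sign restriction on $q$, the lower bound does follow formally from the upper bound, and your algebra checks out. The sharpness remark via $T_{p,q}$ is also fine. The genuine gap is the upper bound itself: every quantitative feature of the statement --- the reparametrization $t \mapsto pt$, the shift $\tfrac{(p-1)(q-1)t}{2}$, and especially the claim that a single companion complex governs $\Upsilon_{K_{p,q}}$ across all of $[0,2/p]$ --- is asserted by analogy rather than derived. The rational surgery mapping cone does not straightforwardly hand you $CFK^{\infty}(K_{p,q})$ for arbitrary $q$: recovering the Alexander filtration of the cable, with correct absolute gradings, from the companion's large-surgery complexes is precisely the hard part, and it has been carried out in the literature only in restricted regimes (chiefly $|q|$ large relative to $p$ and $g(K)$, or for the top Alexander gradings, which suffices for $\tau$ and $\epsilon$ but not for $\Upsilon$ on the whole interval). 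Your closing paragraph concedes that this is ``the crux,'' which means the proposal stops exactly where the theorem's content begins. For what it is worth, the proof in \cite{upsCables} avoids computing the cable complex for general $q$ altogether: following Van Cott's approach to $\tau$ of cables, it combines the explicit answer in the stable large-$|q|$ regime with an interpolation over framings using the cobordism inequality for $\Upsilon$ applied to the standard cobordisms of controlled genus between $K_{p,q_1}$ and $K_{p,q_2}$; the asymmetry between $(q-1)$ and $(q+1)$ in the two bounds is exactly the loss incurred in that interpolation, which is itself evidence that a direct ``exact grading shift'' construction of the kind you sketch cannot be what underlies the stated inequalities.
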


Thus, we have the following specialization for $t=1$.

\begin{cor}\label{cor:2qUpsilon}
    Given a knot $K$, $$ \left| \upsilon(K_{2, q}) + \dfrac{q}{2} \right| \leq 1$$ 
\end{cor}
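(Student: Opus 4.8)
The plan is to specialize the cabling inequality for the Upsilon invariant (Theorem~1.3 in \cite{upsCables}) to the parameters $p=2$, $t=1$, and then apply the standard normalization of $\Upsilon$ at the endpoint of its domain.

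First I would set $p=2$, which turns the hypothesis $0 \le t \le \tfrac{2}{p}$ into $0 \le t \le 1$, so that evaluating at $t=1$ is permitted. Substituting $p=2$ and $t=1$ into
$$\Upsilon_{K}(pt) - \frac{(p-1)(q+1)t}{2} \le \Upsilon_{K_{p,q}}(t) \le \Upsilon_{K}(pt) - \frac{(p-1)(q-1)t}{2}$$
and recalling that $\upsilon(K_{2,q}) = \Upsilon_{K_{2,q}}(1)$ by definition, this yields
$$\Upsilon_{K}(2) - \frac{q+1}{2} \le \upsilon(K_{2,q}) \le \Upsilon_{K}(2) - \frac{q-1}{2}.$$

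Next I would invoke the basic fact that $\Upsilon_{K}$ is defined on $[0,2]$ with $\Upsilon_{K}(0)=0$ and satisfies the symmetry $\Upsilon_{K}(t) = \Upsilon_{K}(2-t)$, so that $\Upsilon_{K}(2) = \Upsilon_{K}(0) = 0$ for every knot $K$. Plugging this in gives
$$-\frac{q+1}{2} \le \upsilon(K_{2,q}) \le -\frac{q-1}{2},$$
and adding $\tfrac{q}{2}$ to each term produces $-\tfrac{1}{2} \le \upsilon(K_{2,q}) + \tfrac{q}{2} \le \tfrac{1}{2}$, hence $\bigl|\upsilon(K_{2,q}) + \tfrac{q}{2}\bigr| \le 1$ (in fact the sharper bound $\tfrac{1}{2}$ falls out directly).

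There is no serious obstacle here; the statement is a one-line consequence of the cited cabling bound. The only points requiring a moment of care are (i) that $t=1$ lies exactly on the boundary of the admissible interval when $p=2$, so one should confirm that the cited inequality holds inclusively at $t=\tfrac{2}{p}$ — which is how Theorem~1.3 is stated — and (ii) recalling the normalization $\Upsilon_{K}(2)=0$, which is immediate from the symmetry $\Upsilon_{K}(t)=\Upsilon_{K}(2-t)$ together with $\Upsilon_{K}(0)=0$.
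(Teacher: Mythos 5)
Your proof is correct and is exactly the argument the paper intends: the corollary is stated as a direct specialization of the cabling inequality at $p=2$, $t=1$, using $\Upsilon_K(2)=\Upsilon_K(0)=0$. Your computation in fact yields the sharper bound $\bigl|\upsilon(K_{2,q})+\tfrac{q}{2}\bigr|\leq\tfrac{1}{2}$, which of course implies the stated bound of $1$; the only detail worth adding is that $q$ must be odd so that $(2,q)$ are relatively prime, as the cited theorem requires.
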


Moving forward, we wish to discuss the \textit{normal Euler number} of a surface. Suppose $F$ is a non-orientable cobordism between knots $K$ and $K'$. First, define the \textit{local self-intersection number} of a surface $F$ by taking $F'$ to be a transverse push-off of $F$ that is compatible with the Seifert framings of $K$ and $K'$. The sign of a point $p \in F \cap F'$ is given by the choice of orientation of $T_{p}F$, the induced orientation of $T_{p}F'$, and the comparison of the orientation of $T_{p}F \oplus T_{p} F'$ with the orientation of $T_{p} ([0, 1] \times S^{3} )$. This comparison gives us a sign, $\pm 1$. Then, the \textit{normal Euler number} of a surface $F$, denoted $e(F)$, is the sum of local self-intersection numbers of points $p \in F \cap F'$ \cite{OSSnonorient}. Note that if we consider an orientable surface $S$, we have $e(S) = 0$. Similarly, $ e(F) = lk (K, s(K))$, where $s(K)$ is the non-vanishing section of the normal bundle $\nu_F$.

\begin{prop}
    Given a knot $K$, choose $q \in \Z$ so that $\pmwhd$ and $K_{2, q}$ cobound a genus 1 non-orientable surface $F$. Then, 
    $$ \left| \upsilon(\text{Wh}^{\pm}_{t}(K)) + \dfrac{q}{2} + \dfrac{e(F)}{4} \right| \leq \dfrac{3}{2} $$
\end{prop}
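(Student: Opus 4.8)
The plan is to combine the cabling estimate of Corollary~\ref{cor:2qUpsilon} with a bound for how $\upsilon$ changes across the genus $1$ non-orientable cobordism $F$. By hypothesis (and by the earlier proposition producing it) such an odd $q$ exists, and $F$ is a \emph{genus $1$} non-orientable cobordism, i.e.\ an annulus with exactly one crosscap attached; it is the trace of a single non-oriented band move on the clasp of $\pmwhd$. I claim the asserted bound follows from the triangle inequality once we establish
\[
\left| \upsilon(\pmwhd) - \upsilon(K_{2, q}) + \tfrac{1}{4}e(F) \right| \le \tfrac{1}{2},
\]
because, writing $\upsilon(\pmwhd) + \tfrac{q}{2} + \tfrac{1}{4}e(F) = \bigl(\upsilon(\pmwhd) - \upsilon(K_{2, q}) + \tfrac{1}{4}e(F)\bigr) + \bigl(\upsilon(K_{2, q}) + \tfrac{q}{2}\bigr)$ and applying Corollary~\ref{cor:2qUpsilon} to the second summand, we obtain $\tfrac{1}{2} + 1 = \tfrac{3}{2}$.

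The key ingredient is the non-orientable cobordism inequality for $\upsilon$ of Ozsv\'ath--Stipsicz--Szab\'o \cite{OSSnonorient}, which I expect to take the form: if $F$ is a non-orientable cobordism from $J_0$ to $J_1$ built from $h$ crosscaps, with normal Euler number $e(F)$ measured against the Seifert framings at the two ends, then $\bigl| \upsilon(J_0) - \upsilon(J_1) + \tfrac{1}{4}e(F) \bigr| \le \tfrac{h}{2}$. Taking $(J_0, J_1) = (\pmwhd, K_{2, q})$ and $h = 1$ gives precisely the displayed reduction, after which the computation above finishes the proof. An alternative, closer to the four-ball picture of \cite{OSSnonorient}: cap the $K_{2, q}$ end of $F$ with a M\"obius band $G \subset B^4$ realizing $\gamma_4(K_{2, q}) = 1$; then $F \cup_{K_{2, q}} G$ is a non-orientable surface in $B^4$ bounded by $\pmwhd$ with $b_1 = 2$ and normal Euler number $e(F) + e(G)$, so the four-ball form $\bigl| \upsilon(\pmwhd) + \tfrac{1}{4}e(F \cup G) \bigr| \le \tfrac{1}{2} b_1$ applies. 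This second route is more computational, since one must then evaluate $e(G)$ for the standard M\"obius spanning surface of the $(2,q)$-cable rather than invoke Corollary~\ref{cor:2qUpsilon} as a black box.

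The main obstacle is making the cobordism inequality precise: one must confirm the exact normalization --- the factor $\tfrac{1}{4}$ on $e(F)$ and $\tfrac{1}{2}$ on the crosscap number --- and, more delicately, the sign with which $e(F)$ enters, since $e(F)$ is a genuine integer invariant of the cobordism (defined through a transverse push-off compatible with the Seifert framings at both ends) and in the conclusion it sits inside the same absolute value as $\upsilon(\pmwhd) + \tfrac{q}{2}$, so a sign slip is not absorbed. One must also verify that the form of \cite{OSSnonorient} being invoked applies to a two-ended cobordism between two nontrivial knots, and not only to a spanning surface of a single knot in $B^4$; if only the latter is stated, one passes to it via the capping construction, at which point the additivity of normal Euler numbers under gluing along $K_{2, q}$ and the value of $e(G)$ for the capping M\"obius band become the remaining things to pin down. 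Everything else is the triangle inequality together with Corollary~\ref{cor:2qUpsilon}.
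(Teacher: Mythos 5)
Your proposal is correct and follows essentially the same route as the paper: the paper likewise invokes Theorem~1.1 of \cite{OSSnonorient} in exactly the form you guessed, namely $\left| \upsilon(\text{Wh}^{\pm}_{t}(K)) - \upsilon(K_{2, q}) + \tfrac{e(F)}{4} \right| \leq \tfrac{1}{2}$ for the genus~1 ($b_{1}(F)=1$) cobordism, and then combines it with Corollary~\ref{cor:2qUpsilon} via the triangle inequality to get $\tfrac{1}{2}+1=\tfrac{3}{2}$. The alternative capping argument you sketch is not needed.
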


\begin{proof}
We first   Suppose that $F \subset [0, 1] \times S^{3}$ is a non-orientable genus 1 smooth cobordism from the knot $\pmwhd \subset \{0\} \times S^{3} $ to the knot $K_{2, q} \subset \{1\} \times S^{3}$. Then, as $b_{1}(F) = 1$, and using Theorem 1.1 from \cite{OSSnonorient} we have 
    $$\left| \upsilon(\pmwhd) - \upsilon(K_{2, q}) + \dfrac{e(F)}{4} \right| \leq \dfrac{1}{2}, $$
     Now from the triangle inequality and Corollary \ref{cor:2qUpsilon} we have 
    $$\left| \upsilon(K_{2, q}) + \dfrac{q}{2} + \upsilon(\pmwhd) - \upsilon(K_{2, q}) + \dfrac{e(F)}{4} \right| \leq \dfrac{3}{2}, $$
    thus concluding the proof. 
\end{proof}

As $\upsilon(\text{Wh}^{\pm}_{t}(K)) \in \{0, \pm 1\}$ this result gives us a relationship between $q$ and the normal Euler number of $F$, the genus one non-orientable surface co-bounded by $\text{Wh}^{\pm}_{t}(K)$ and $K_{2, q}$.

\section{NON-ORIENTABLE SLICENESS}\label{sect:NOslice}

The main goal of this section is to show that the non-orientable sliceness of Whitehead doubles is obstructed by $t$, the twisting parameter. 

\begin{thm}\label{thm:whdalternating4genus}
   For any knot $K$, denote $\lambda$ as the Seifert framing, then
\begin{enumerate}[(i)]
    \item $\gamma_{4}(\text{Wh}^{+}_{t}(K)) = 2$ for $t>0$ when $t + \lambda $ is odd.
    \item $\gamma_{4}(\text{Wh}^{-}_{t}(K)) = 2$ for $t<0$ when $t + \lambda $ is odd.
\end{enumerate} 
\end{thm}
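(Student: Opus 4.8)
The plan is to combine the general upper bound $\gamma_4(\text{Wh}^{\pm}_t(K)) \le 2$ from Theorem~\ref{prop:no4gWhd} with a matching lower bound of $2$, so the whole content is the lower bound: to show $\gamma_4(\text{Wh}^{+}_t(K)) \ge 2$ for $t>0$ with $t+\lambda$ odd (and the mirror statement for $\text{Wh}^{-}_t$). Equivalently, I must rule out $\gamma_4 = 1$, i.e.\ show these Whitehead doubles do \emph{not} bound a smoothly embedded M\"obius band in $B^4$. The natural obstruction to use is the Upsilon-type inequality for non-orientable surfaces from \cite{OSSnonorient}, combined with the normal Euler number constraints. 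First I would recall that if a knot $J$ bounds a M\"obius band $F \subset B^4$, then by \cite{OSSnonorient} (Theorem~1.1 applied with $b_1(F)=1$) one gets a bound of the form $\bigl|\upsilon(J) + e(F)/4\bigr| \le 1/2$, and moreover the normal Euler number of a M\"obius band in $B^4$ bounded by a knot is constrained to lie in a specific set depending on the congruence class — in fact $e(F) \equiv 2\,\mathrm{something} \pmod 4$, and the possible values of $e(F)$ for a M\"obius band are $\pm 2$ (up to the standard normalization); this is where the parity hypothesis $t+\lambda$ odd enters.

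**Where the parity comes in.** The key computational step is to pin down $\upsilon(\text{Wh}^{+}_t(K))$ and the allowed normal Euler numbers. By Proposition~\ref{prop:upsWHD}, for $t>0$ we have $\upsilon(\text{Wh}^{+}_t(K)) = 0$ when $t \ge 2\tau(K)$ and $\upsilon(\text{Wh}^{+}_t(K)) = -1$ when $t < 2\tau(K)$. I would then invoke the fact (standard, and used implicitly via the normal Euler number discussion in Section~\ref{section:NOcobords}) that a knot $J$ bounding a M\"obius band $F$ in $B^4$ has $e(F) \in \{2\mu : \mu \text{ determined mod } 4\}$, with $e(F)/2$ congruent mod $2$ to a quantity governed by the framing; concretely, the relevant congruence is $e(F)/2 \equiv \overline{\sigma} + 1 \pmod 2$ or similar, and the hypothesis that $t + \lambda$ is odd forces $e(F)$ into a residue class that, plugged into $\bigl|\upsilon(J) + e(F)/4\bigr| \le 1/2$, is incompatible with $\upsilon(J) \in \{0,-1\}$. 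The cleanest route is probably: M\"obius band $\Rightarrow$ $e(F) \in \{-2, +2\}$ after resolving the framing ambiguity using $t+\lambda$ odd (this parity condition pins $e(F)$ to exactly these two values rather than, say, $\{-2,+2,\pm 6,\dots\}$ being further restricted, or conversely excludes $0$), and then $\bigl|\upsilon + e(F)/4\bigr| = \bigl|\upsilon \pm \tfrac12\bigr|$, which equals $\tfrac12$ or $\tfrac32$ depending on signs; the value $\tfrac32 > \tfrac12$ gives a contradiction for the appropriate sign, and one must check that \emph{both} choices of sign of $e(F)$ lead to a contradiction — this likely requires also using the mod-$4$ refinement of \cite{OSSnonorient} (the version of the inequality that separately constrains $e(F) \bmod 4$), or an argument that the M\"obius band for $\text{Wh}^{+}_t$ with $t>0$ must have $e(F)$ of a definite sign.

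**Carrying it out and the main obstacle.** So the steps, in order, are: (1) reduce to proving $\gamma_4 \ge 2$ using Theorem~\ref{prop:no4gWhd}; (2) assume for contradiction a M\"obius band $F$ in $B^4$ bounded by $\text{Wh}^{+}_t(K)$; (3) determine the set of possible normal Euler numbers $e(F)$, using that $F$ has $b_1 = 1$ and that the parity $t+\lambda$ odd controls $e(F)$ modulo $4$ (this is the step that genuinely uses the twisting hypothesis — without it the double can be non-orientably slice, consistent with Theorem~\ref{thm:NO4Gnotpreserved}); (4) apply the \cite{OSSnonorient} inequality $\bigl|\upsilon(\text{Wh}^{+}_t(K)) + e(F)/4\bigr| \le 1/2$ together with the value of $\upsilon$ from Proposition~\ref{prop:upsWHD} to reach a numerical contradiction for every admissible $e(F)$; (5) mirror the argument for $\text{Wh}^{-}_t$ with $t<0$ (replacing $\upsilon$ by its negative and $e(F)$ by $-e(F)$). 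The main obstacle I anticipate is step (3)--(4): I need the sharp form of the non-orientable Upsilon bound that incorporates the $e(F) \bmod 4$ information (not just $|e(F)+4\upsilon|\le 2$), and I need a clean argument — probably via the existence of the explicit genus-one cobordism to $K_{2,q}$ from Proposition~\ref{prop:no4gWhd} and the Euler-number bookkeeping of Section~\ref{section:NOcobords}, which relates $e$ of a M\"obius band to $q$ and hence to $t + \lambda$ — that the parity hypothesis really does exclude the one "bad" residue class of $e(F)$ that would otherwise be compatible with $\upsilon \in \{0, \pm 1\}$. Getting that parity bookkeeping exactly right, rather than off by a sign or a factor of $2$, is the delicate part.
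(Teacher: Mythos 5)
Your reduction to proving the lower bound $\gamma_{4}\geq 2$ and quoting Theorem \ref{prop:no4gWhd} for the upper bound is the right skeleton, but the engine you propose for the lower bound has a genuine gap at exactly the step you flag as delicate, and it is not the paper's route. The problem is quantitative: for a M\"obius band $F\subset B^{4}$ with boundary $J$, the inequality $\left|\upsilon(J)+e(F)/4\right|\leq 1/2$ together with $\upsilon(J)\in\{0,-1\}$ (Proposition \ref{prop:upsWHD}) and the residue constraint $e(F)\equiv 2 \pmod 4$ still leaves consistent values ($e(F)=\pm 2$ when $\upsilon=0$, $e(F)\in\{2,6\}$ when $\upsilon=-1$), so there is no contradiction. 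Worse, the only other Euler-number input available, the Gordon--Litherland-type bound $\left|\sigma(J)-e(F)/2\right|\leq b_{1}(F)$, is precisely how \cite{OSSnonorient} eliminate $e(F)$ to obtain Theorem \ref{UpsSigBound}; and the paper explicitly observes that since $\sigma(\pmwhd)=0$ and $\upsilon(\pmwhd)\in\{0,\pm 1\}$, that bound gives at most $\gamma_{4}\geq 1$. So the $\upsilon$/Euler-number package cannot rule out a M\"obius band here no matter how the bookkeeping is arranged. The congruence you gesture at ("$e(F)/2\equiv\bar{\sigma}+1$ or similar") is never identified, and it is the entire content of the argument: the hypothesis that $t+\lambda$ is odd never actually enters your proof.

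What is missing is the Arf invariant, and that is the paper's actual proof. The $t$-twisted Whitehead double bounds a genus-one Seifert surface with an explicit $2\times 2$ Seifert matrix whose diagonal records the clasp sign and the twisting; from it one computes $\sigma(\pmwhd)=0$ for all $t$ and $\arf(\pmwhd)=1$ exactly when $t+\lambda$ is odd (Theorem \ref{thm:whdsignature}). Yasuhara's obstruction (Proposition \ref{sigArf}), which encodes the mod-$8$ Guillou--Marin-type constraint on $e(F)$ that your sketch is reaching for, then gives $\sigma+4\arf\equiv 4\pmod 8$ and hence $\gamma_{4}\geq 2$; Theorem \ref{prop:no4gWhd} supplies the matching upper bound. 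To repair your argument you would have to import that Arf computation anyway, at which point the $\upsilon$ machinery is superfluous.
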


We notice that this is quite different from the results on the orientable side, where Hedden \cite{HeddenWHD} showed the $\tau$ invariant for Whitehead doubles was non-zero over specific intervals. For the rest of this paper, it is assumed that when speaking of positive Whitehead doubles we assume $t > 0$, and negative Whitehead doubles we assume $t<0$. The reason for these assumptions is because when we consider $\text{Wh}^{+}_{t}(K)$ with $t$ negative, or $\text{Wh}^{-}_{t}(K)$ with $t$ positive, we end up with half-twisted Whitehead doubles, and these computational strategies fail when half-twists are involved. For example, $\text{Wh}^{+}_{-1}(K)$ gives the trefoil knot. See Figure \ref{fig:halftwistdbl} for an example of $\text{Wh}^{+}_{t}(U)$ for $t<0$. 

\begin{figure}[h]
    \centering
    \includegraphics[width=0.95\linewidth]{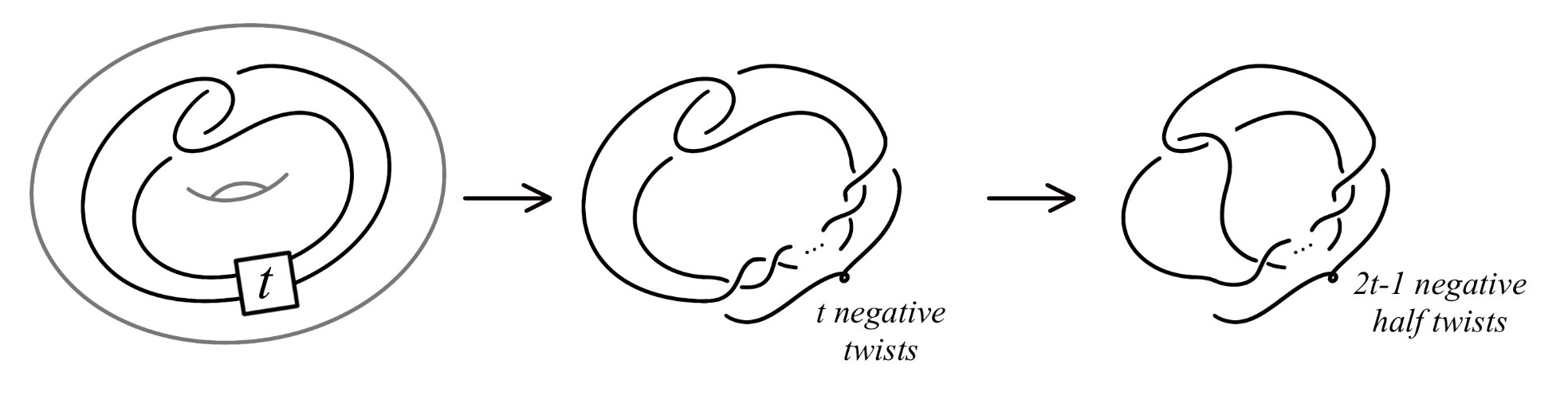}
    \caption{Constructing the half-twisted Whitehead double of the unknot}
    \label{fig:halftwistdbl}
\end{figure}

As previously mentioned, it is well known that $\sigma( \text{Wh}^{\pm}(K)) = 0$ for any knot $K$. We wish to expand this notion for the $t$-twisted Whitehead double of any knot, as well as providing a solution set for the Arf invariant. 

\begin{thm}\label{thm:whdsignature}
    For any knot $K$ in $S^{3}$, $\sigma (\pmwhd) = 0$. Additionally, denoting the Seifert framing of $K$ as $\lambda$, $\arf (\pmwhd) = 0$ if and only if $t+ \lambda $ is even. 
\end{thm}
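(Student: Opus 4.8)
The plan is to compute the Seifert matrix of $\pmwhd$ explicitly from a natural genus-one Seifert surface, and then read off both the signature and the Arf invariant from it. First I would exhibit a genus-one Seifert surface $\Sigma$ for $\pmwhd$: take two bands running parallel to the companion $K$ (following the pattern of the clasp in Figure~\ref{fig:whdTpattern}), so that $H_1(\Sigma)\cong\Z^2$ with a basis $\{x,y\}$, where $x$ is the generator that ``goes around $K$'' and $y$ is the small generator encircling the clasp. The self-linking of $x$ records the framing data (the $t$ twists plus the Seifert framing $\lambda$ of $K$), the self-linking of $y$ comes from the clasp ($0$ or $-1$ depending on how one resolves it), and the off-diagonal entries differ by $1$ because $V-V^T$ must be the intersection form, unimodular of rank $2$. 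This yields a Seifert matrix of the shape $V=\begin{pmatrix} t+\lambda & 1 \\ 0 & \varepsilon\end{pmatrix}$ (up to the usual sign conventions and up to the $+$ vs.\ $-$ case, which flips a sign), with $\varepsilon\in\{0,-1\}$ fixed by the clasp convention.

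Next, for the signature: $V+V^T=\begin{pmatrix} 2(t+\lambda) & 1 \\ 1 & 2\varepsilon\end{pmatrix}$ has determinant $4\varepsilon(t+\lambda)-1$, which is always negative (it is $-1$ times an odd positive number, or is $-1$ when $\varepsilon=0$), hence the form is indefinite of rank $2$, so $\sigma(\pmwhd)=\sigma(V+V^T)=0$. This is exactly the step where one must be careful that the computation is uniform in $t$ and $\lambda$ and in the $\pm$ choice — the point is that the determinant of $V+V^T$ never changes sign, so no eigenvalue crossing occurs, independent of $t$.

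For the Arf invariant I would use the Alexander-polynomial criterion already quoted in the excerpt (Murasugi): $\arf(\pmwhd)=0$ iff $\Delta(-1)\equiv\pm1\pmod 8$. Using the formula from Remark~\ref{rem:topologicallySliceWhd} (with the appropriate bookkeeping: the ``$b$-twisted'' parameter there corresponds to $t+\lambda$ twists of the doubled strand relative to the $0$-framing, since the pattern is glued along the Seifert framing), we get $\Delta_{\pmwhd}(-1) = b + (2b+1) + b = 4b+1$ with $b = t+\lambda$, so $\Delta_{\pmwhd}(-1)=4(t+\lambda)+1$. Then $4(t+\lambda)+1\equiv\pm1\pmod 8$ iff $4(t+\lambda)\equiv 0$ or $-2\pmod 8$ iff $t+\lambda$ is even. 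Alternatively one can compute the Arf invariant directly from $V$ via the quadratic form $q(v)=vVv^T\bmod 2$: on the basis one gets $q(x)=t+\lambda$, $q(y)=\varepsilon$, and $\arf = q(x)q(y) \bmod 2$ after diagonalizing the mod-2 symplectic form, which again equals $(t+\lambda)\cdot\varepsilon$; with the clasp giving $\varepsilon$ odd this reproduces ``$\arf=0$ iff $t+\lambda$ even.'' I would present whichever of these is cleaner once the sign conventions are pinned down.

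The main obstacle is not any hard theorem but getting the Seifert surface and its matrix correct and genuinely uniform: one must check that the same genus-one surface works for all $t$ (the $t$ full twists are absorbed into the self-framing of the band $x$), that the $+$ and $-$ doubles differ only in a controlled sign that does not affect either conclusion, and — crucially — that the translation between the ``number of twists $t$'' in the Whitehead pattern and the exponent $b$ appearing in the Alexander polynomial formula correctly involves the Seifert framing $\lambda$ of $K$. This bookkeeping around the framing is precisely the subtlety that makes the answer ``$t+\lambda$ even'' rather than ``$t$ even,'' and it is where I would spend the most care. Once $V$ is correct, both statements are immediate.
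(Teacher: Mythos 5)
Your overall strategy is the same as the paper's: exhibit the obvious genus-one Seifert surface for $\pmwhd$, write down its $2\times 2$ Seifert matrix, and read off $\sigma$ and $\arf$ (the paper does this via the model $K(1,t,1)$ for doubles of the unknot and then observes that the matrix is unchanged for a general companion; its matrix has diagonal entries $-1$ and $t$, so your bookkeeping with $t+\lambda$ is if anything closer to the theorem as stated). Your Arf computation is sound by either route, with one caveat: the clasp generator must have odd self-framing, i.e.\ $\varepsilon=\pm1$, not $\varepsilon\in\{0,-1\}$. If $\varepsilon$ were $0$ the quadratic form would give $\arf=0$ for every $t$ and the ``only if'' direction would fail, so this needs to be pinned down rather than left as an option.

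The genuine gap is in the signature step. The determinant $\det(V+V^{T})=4\varepsilon(t+\lambda)-1$ is \emph{not} always negative: when the clasp sign and the twisting conspire (e.g.\ $\varepsilon=-1$ and $t+\lambda\leq -1$) it is positive, $V+V^{T}$ is definite, and $\sigma=\pm 2$. This is not hypothetical --- $\text{Wh}^{+}_{-1}(U)$ is the trefoil, with signature $-2$, as the paper itself notes. The claim is rescued only by the standing convention, stated in the paper just before this theorem, that positive doubles are taken with $t>0$ and negative doubles with $t<0$ (excluding the half-twisted doubles); under that convention the clasp entry and the twist entry have opposite signs, the determinant is negative, and your indefiniteness argument goes through. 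As written, your assertion that the sign of the determinant is ``independent of $t$'' is false, and you must invoke this sign hypothesis explicitly.
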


We additionally discuss Theorem \ref{thm:NO4Gnotpreserved} in this section. We will do this by examining the Whitehead double in the presentation $K(a, b, c)$ presented in Livingston \cite{livingstonKnotconc}. Livingston discusses how when $a=1=c$, the knots $K(1, b, 1)$ are doubles of the unknot, see Figure \ref{fig:btwists}. We note that we switch the notation from Livingston and set $b=t$ and thus have $K(1, t, 1)$ for clarity in our work.

\begin{figure}
\begin{center}
  \begin{subfigure}{0.33\textwidth}
    \includegraphics[width=\linewidth]{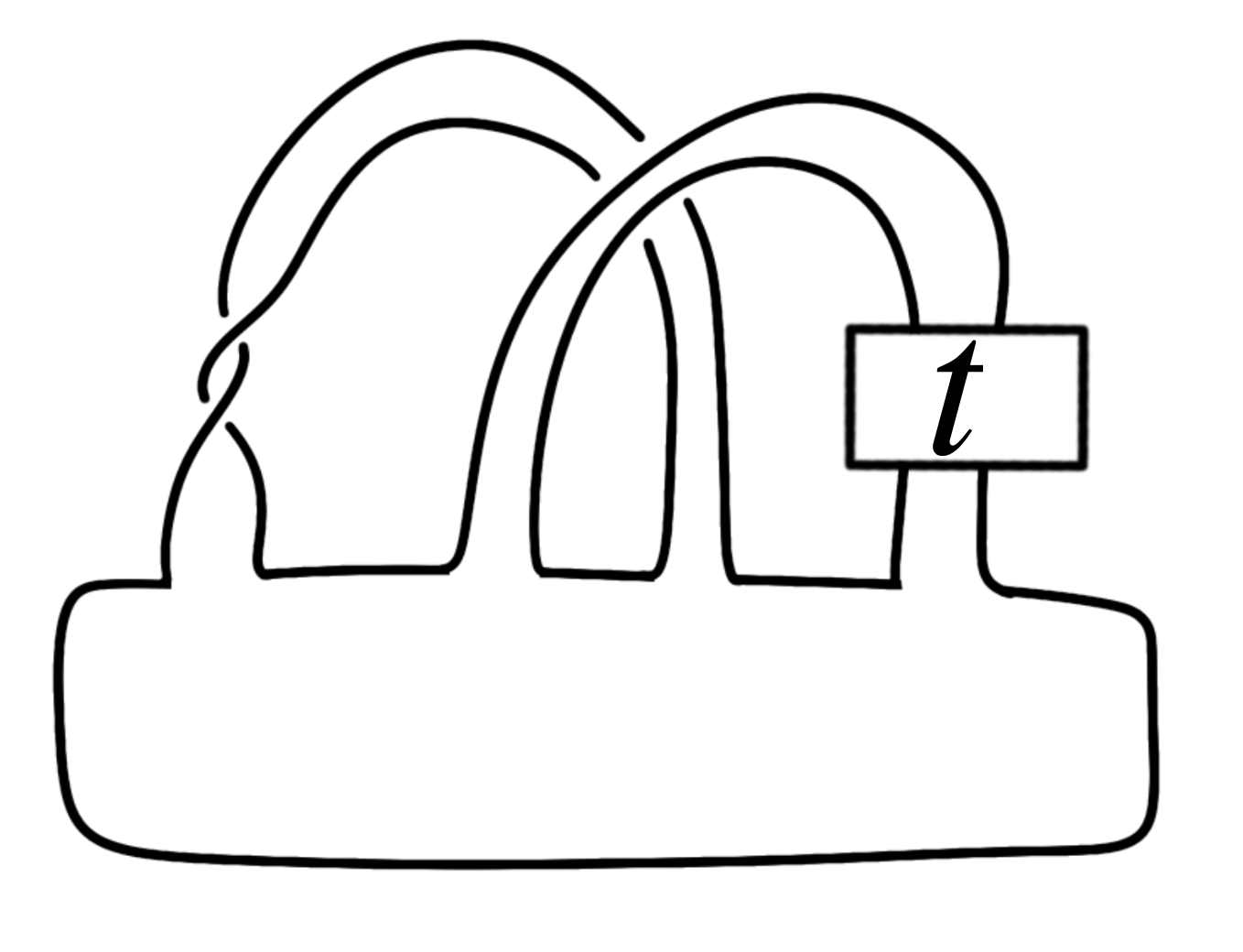}
    \caption{$K(1, t, 1)$} 
  \end{subfigure}%
  \hspace{3mm} 
  \begin{subfigure}{0.25\textwidth}
    \includegraphics[width=\linewidth]{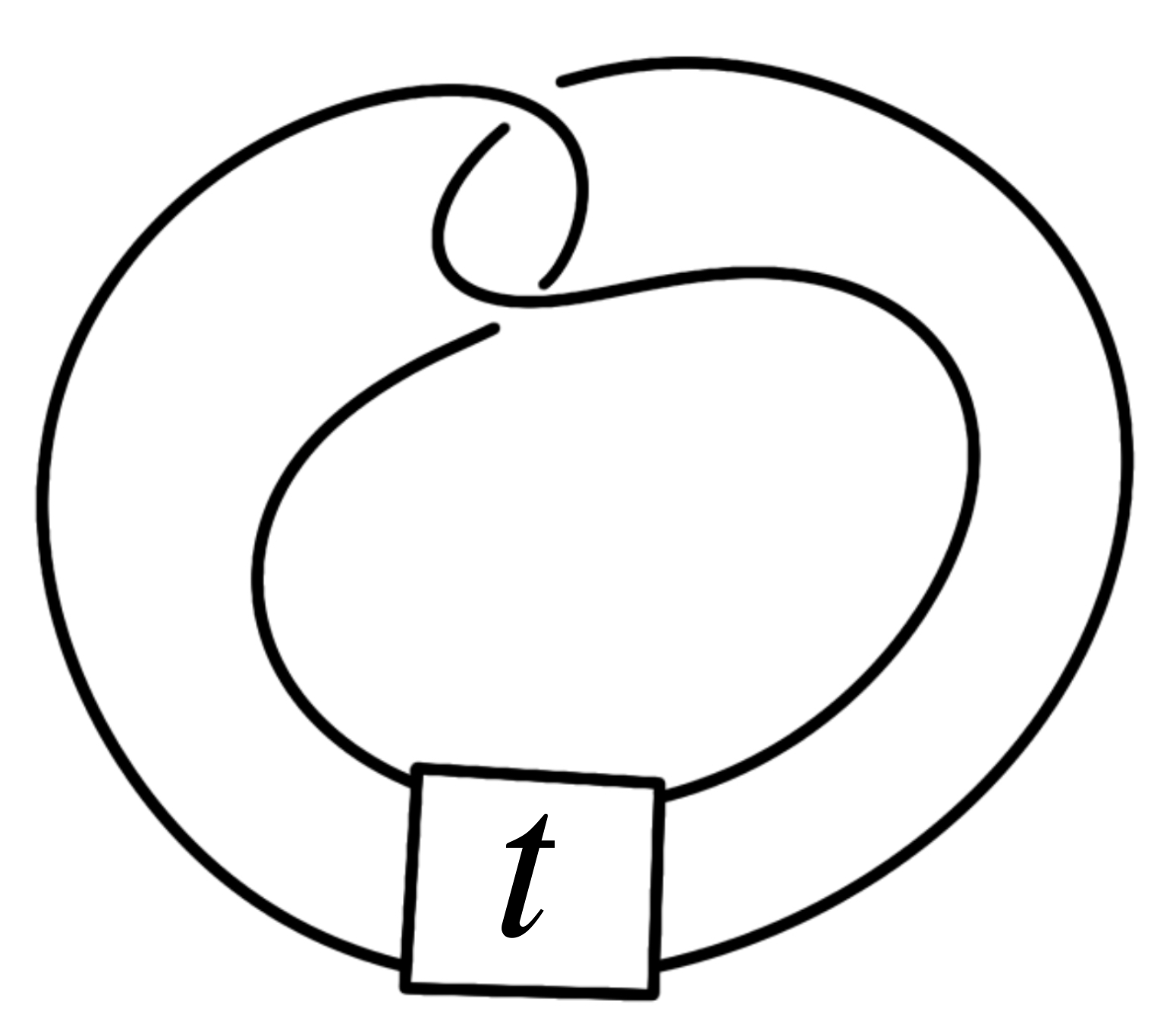}
    \caption{$\text{Wh}^{+}_{t}(U)$} 
  \end{subfigure}
  \end{center}

\caption{Isotopic Presentations of $K(1, t, 1) = \text{Wh}^{+}_{t}(U)$} \label{fig:btwists}
\end{figure}

\begin{proof}[Proof of Theorem \ref{thm:whdsignature}]

The proof will be computational, using the Seifert matrix for the $t$-twisted Whitehead double of the unknot to calculate the signature and Arf invariant for $t$-twisted Whitehead doubles of knots in general.  

To begin, compute the Seifert matrix $V$ for $K(1, t, 1)$, the positive $t$-twisted Whitehead double of the unknot. We see here that the Seifert matrix depends only on $t$. 
\[
V =
\begin{bmatrix}
    -1 & 1 \\
    0 & t
\end{bmatrix}
\]
Using $V$, we may compute the signature and Arf invariant. 

\begin{equation}\label{eqtn:sigmaB}
    \sigma(K(1, t, 1)) = 0 \text{ for all } t
\end{equation}

\begin{equation}\label{eqtn:ArfB}
    \arf  (K(1, t, 1)) = 0 \iff t \text{ is even}
\end{equation}

We note that the negative $t$-twisted Whitehead double has a similar Seifert matrix, differing only by a sign in one entry, and calculations of signature and Arf invariant yield the same conclusion. For the interested reader, the Seifert matrix $V_{-}$ in general for $K(-1, t, 1)$, the negative $t$-twisted Whitehead double of the unknot, is provided below.  
\[
V_{-}=
\begin{bmatrix}
    1 & 1 \\
    0 & t
\end{bmatrix}
\]

Thus, the Seifert matrix in general for $\pmwhd$ depends only on the sign of the clasp and the twisting parameter $t$. We generalize here to the Whitehead double of an arbitrary knot $K$, as all Whitehead doubles bound a genus one Seifert surface and will have Seifert matrices $V$ or $V_{-}$ that depend on $t$. That is to say, for an arbitrary knot $K$, the Seifert matrix corresponding to $\pmwhd$ will be identical to $V$ or $V_{-}$, respective to the positive/negative clasp. 

 We generalize Equation \ref{eqtn:sigmaB} to see $\sigma ( \pmwhd ) = 0$ for any knot $K$. Similarly, equation \ref{eqtn:ArfB} provides $\arf (\pmwhd ) = 0$ when $t$ is even. 
 
\end{proof}

Yasuhara \cite{yasuhara} gives us an obstruction to knots bounding a M\"obius band using these classical knot invariants.
\begin{prop}[Proposition 5.1 in \cite{yasuhara}]\label{sigArf}
    Given a knot $K$ in $S^{3}$, if $\sigma(K) + 4  \arf(K) \equiv 4 \pmod{8} $, then $\gamma_{4}(K) \geq 2$.
\end{prop}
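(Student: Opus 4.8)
The plan is to prove the contrapositive: if $\gamma_{4}(K)=1$, so that $K$ bounds a smoothly embedded M\"obius band $F\subset B^{4}$, then $\sigma(K)+4\arf(K)\not\equiv 4\pmod{8}$.

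First I would pass to the double cover $W$ of $B^{4}$ branched along a push-in of $F$. Its boundary is $\Sigma_{2}(K)$, the double branched cover of $S^{3}$ along $K$, which is a rational homology sphere because $|H_{1}(\Sigma_{2}(K);\Z)|=|\Delta_{K}(-1)|$ is odd. A transfer (Smith-theory) argument for the deck involution identifies the reduced rational homology of $W$ with the $(-1)$-eigenspace and shows it is carried by the lift $\widetilde{F}$ of the branch M\"obius band, so $b_{2}(W)\le b_{1}(F)=1$; moreover $\widetilde{F}\cdot\widetilde{F}=\tfrac12\,e(F)$, where $e(F)$ is the normal Euler number of $F$ relative to the Seifert framing of $K$. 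Gluing $F$ to a pushed-in Seifert surface of $K$ to form a closed non-orientable surface in $S^{4}$ and applying the standard (Massey-type) congruence for normal Euler numbers gives $e(F)\equiv 2\pmod{4}$, so $e(F)\ne 0$; hence $b_{2}(W)=1$, the $\Q$-intersection form of $W$ is the rank-one form $(\tfrac12 e(F))$, and $\sigma(W)\in\{-1,1\}$ is the sign of $e(F)$.

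Next I would compute $\sigma(W)$ a second way, via the relative $G$-signature theorem in the form of the Guillou--Marin / Viro congruence for a surface with boundary. This evaluates $\sigma(W)$ modulo $16$ in terms of $\sigma(K)$, the Euler number $e(F)$, and the Brown invariant $\beta(q)\in\Z/8$ of the $\Z/4$-valued quadratic refinement $q$ of the mod-$2$ intersection form on $H_{1}(F;\Z/2)$ determined by the embedding. Here the M\"obius-band hypothesis is decisive: $H_{1}(F;\Z/2)=\Z/2$ is generated by the core $c$, with $c\cdot c=1$, so $q(c)$ is odd and $\beta(q)\in\{1,-1\}$, and this residual sign is exactly what records $\arf(K)$. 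Equating the two computations of $\sigma(W)$ and feeding in $\sigma(W)\in\{-1,1\}$, $\beta(q)\in\{\pm1\}$, $e(F)\equiv 2\pmod 4$, and the evenness of $\sigma(K)+4\arf(K)$, one finds that the residue of $\sigma(K)+4\arf(K)$ modulo $8$ lies in a subset of $\{0,2,4,6\}$ that excludes $4$, which is the claim.

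The main obstacle is making the relative Guillou--Marin/$G$-signature accounting precise: setting up the quadratic refinement $q$ on $H_{1}(F;\Z/2)$ correctly from the pushed-in surface, identifying its Brown invariant with $\arf(K)$ (rather than with some unrelated parity), and pinning down the exact constants in the resulting congruence so that it is the value $4$ that gets excluded. The supporting ingredients — the transfer bound on $b_{2}(W)$, the rational-homology-sphere boundary together with $e(F)\ne 0$ which force $b_{2}(W)=1$ and $|\sigma(W)|=1$, and the Massey-type constraint $e(F)\equiv 2\pmod 4$ — are routine, and it is precisely the smallness of $H_{1}(F;\Z/2)$ forced by the M\"obius-band hypothesis that makes the Brown-invariant term controllable.
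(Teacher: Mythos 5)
First, note that the paper does not prove this statement at all: it is quoted verbatim as Proposition 5.1 of Yasuhara's paper and used as a black box, so there is no in-paper argument to compare yours against. Judged on its own terms, your outline has the right general shape (it is in the same family as Yasuhara's actual argument, which runs through characteristic surfaces and a Guillou--Marin/Robertello-type congruence), and several of your supporting steps are sound: the transfer bound $b_2(W)\le b_1(F)$, the identification $\widetilde F\cdot\widetilde F=e(F)/2$, and the Whitney--Massey argument giving $e(F)\equiv 2\pmod 4$ (gluing to a pushed-in Seifert surface yields a closed surface of odd Euler characteristic, hence non-orientable, with $e\equiv 2\chi\equiv 2\pmod 4$) are all correct and force $b_2(W)=1$, $\sigma(W)=\pm1$.

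The gap is exactly where you flag it, and it is not a routine loose end --- it is the entire content of the proposition. You never write down the relative Guillou--Marin/Viro congruence, never verify its constants, and never carry out the arithmetic showing that the excluded residue is $4$ rather than $0$, $2$, or $6$; as written, the argument would ``prove'' any of those exclusions equally well. (A sanity check such as $T_{2,q}$, all of which bound M\"obius bands and realize the residues $0$ and $2$, is needed to confirm the constants.) There is also a likely misattribution in your accounting: for a M\"obius band the Brown invariant $\beta(q)\in\{\pm1\}$ of the rank-one form on $H_1(F;\Z/2)$ is determined by $e(F)$ (one has $q(c)\equiv \pm e(F)/2 \pmod 4$ for the core $c$), not by $\arf(K)$; the Arf invariant enters the relative congruence through a separate boundary correction term of Robertello type. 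Conflating these two contributions is precisely the kind of error that would shift the final congruence by $4$ and invalidate the conclusion, so until the congruence is stated precisely and its constants checked, the proposal is a plan rather than a proof.
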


\begin{thm}
    $\gamma_{4}(K(1, t, 1)) = 2$ when $t$ is odd. 
\end{thm}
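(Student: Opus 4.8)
The plan is to obtain this as a direct consequence of the classical‑invariant computation in Theorem~\ref{thm:whdsignature}, Yasuhara's obstruction (Proposition~\ref{sigArf}), and the general upper bound of Theorem~\ref{prop:no4gWhd}; here, as elsewhere in this section, $t>0$ is understood since $K(1,t,1)=\text{Wh}^{+}_{t}(U)$ (Figure~\ref{fig:btwists}).

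First I would record the upper bound. Because $K(1,t,1)$ is precisely $\text{Wh}^{+}_{t}(U)$, Theorem~\ref{prop:no4gWhd} gives $\gamma_{4}(K(1,t,1))\leq 2$ with no further work; concretely, the non‑oriented band move on the clasp produces $U_{2,q}$, which is non‑orientably slice, so one band move and Proposition~\ref{BMbound} already exhibit a non‑orientable surface with $b_1=2$.

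Next I would establish the matching lower bound from the classical invariants. By Equation~\ref{eqtn:sigmaB}, $\sigma(K(1,t,1))=0$ for all $t$, and by Equation~\ref{eqtn:ArfB}, $\arf(K(1,t,1))=1$ exactly when $t$ is odd. Hence, for $t$ odd,
$$\sigma(K(1,t,1)) + 4\,\arf(K(1,t,1)) = 0 + 4 \equiv 4 \pmod{8},$$
so Proposition~\ref{sigArf} yields $\gamma_{4}(K(1,t,1))\geq 2$. Combining with the upper bound gives $\gamma_{4}(K(1,t,1))=2$.

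There is no real obstacle here: the content‑bearing steps — the Seifert‑matrix computation underlying Theorem~\ref{thm:whdsignature} and the $\gamma_4 \le 2$ bound of Theorem~\ref{prop:no4gWhd} — are already in place, and the only thing to check is the elementary congruence above, which pinpoints the parity of $t$ as exactly the condition that triggers Yasuhara's obstruction. I would also remark that the identical argument applied to the Seifert matrix $V_{-}$ shows $\gamma_{4}(K(-1,t,1))=2$ for $t$ odd, and that grafting on a companion knot $K$ (replacing the unknot, as in the proof of Theorem~\ref{thm:whdsignature}, which replaces $t$ by $t+\lambda$ in the parity condition) is precisely the mechanism behind Theorem~\ref{thm:whdalternating4genus}.
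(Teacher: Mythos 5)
Your proposal is correct and follows exactly the paper's own argument: the lower bound comes from applying Yasuhara's obstruction (Proposition~\ref{sigArf}) to the computations in Equations~\ref{eqtn:sigmaB} and~\ref{eqtn:ArfB}, and the upper bound is Theorem~\ref{prop:no4gWhd}. Your version is slightly more explicit in spelling out the congruence $0 + 4 \equiv 4 \pmod{8}$, but the substance is identical.
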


\begin{proof}
    Let $t$ be an odd integer. This is a direct application of Yasuhara's obstruction in Proposition \ref{sigArf} to the computations in equations \ref{eqtn:sigmaB} and \ref{eqtn:ArfB} to see $\gamma_{4}(K(1, t, 1)) \geq 2$. As we have $\gamma_{4}(\pmwhd) \leq 2$ from Theorem \ref{prop:no4gWhd}, we may conclude $\gamma_{4}(K(1, t, 1)) = 2$, as desired. 
\end{proof}

As we have that the knots $K(1, t, 1)$ are doubles of the unknot, we directly conclude that the $t$-twisted Whitehead double takes the unknot, which has $\gamma_{4}(U)=1$, to a knot with non-orientable genus 2, thus not maintaining non-orientable sliceness, similar to the behavior of the pattern in the orientable setting. 

We also offer a different proof of Theorem \ref{prop:no4gWhd} using the \textit{crosscap number}, or non-orientable 3 genus of a knot $K$, $\gamma_{3}(K)$. Similar to the orientable case, it is well known that $\gamma_{4}(K) \leq \gamma_{3}(K)$. 

\begin{thm}
    Given $K$ is a knot in $S^{3}$, $\gamma_{3}(\pmwhd) \leq 2$.
\end{thm}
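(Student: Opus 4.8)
The plan is to exhibit an explicit non-orientable spanning surface in $S^3$ for $\pmwhd$ with first Betti number $2$, which immediately gives $\gamma_3(\pmwhd)\le 2$. The starting point is that every Whitehead double $\pmwhd$ bounds a genus-one Seifert surface $\Sigma$ (this was used already in the proof of Theorem~\ref{thm:whdsignature}, and is visible directly from the pattern in Figure~\ref{fig:whdTpattern}). A genus-one orientable surface has $b_1(\Sigma)=2$, but of course it is orientable, so it does not by itself witness a crosscap-number bound. The idea is to modify $\Sigma$ near the clasp: the clasp region of the Whitehead double pattern is exactly a place where one can trade an orientable band for a half-twisted (non-orientable) band without changing the boundary knot, at the cost of no increase in $b_1$.

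Concretely, I would proceed as follows. First, I would take the obvious genus-one Seifert surface for $\pmwhd$ built from the pattern: two bands following the two parallel strands of the double, with the clasp contributing the ``interlocking'' of the two bands, sitting over a disk bounded by the companion pushoff. Second, I would observe that one of these two bands can be isotoped (rel its attaching arcs on the disk) so that the clasp crossing is realized by inserting a single half-twist into that band instead of letting the two bands link; this is the standard ``unknotting the clasp by adding a half-twist'' move and it changes the surface from orientable to non-orientable while keeping the number of bands — hence $b_1$ — equal to $2$. Third, I would check that the resulting surface $F$ is genuinely non-orientable (the half-twisted band is a Möbius-type band, so $w_1(F)\neq 0$) and still has $\partial F = \pmwhd$; the twisting parameter $t$ and the sign of the clasp only affect the framings of the bands, not the count, so the construction is uniform in $t$ and in the $\pm$. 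This yields a non-orientable surface with $b_1(F)=2$, i.e. $\gamma_3(\pmwhd)\le 2$.

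Alternatively — and this may be cleaner to write — I would route the argument through the non-orientable band move already established in this section: by Proposition~\ref{prop:no4gWhd}'s proof there is a single non-oriented band move from $\pmwhd$ to $K_{2,q}$, and $K_{2,q}$ is a $(2,q)$-cable, which bounds a Möbius band \emph{in $S^3$} (the standard once-punctured Möbius band for a $(2,q)$-torus pattern, realized inside the solid torus neighborhood of $K$), so $\gamma_3(K_{2,q})=1$. A non-oriented band move is performed in $S^3$ and changes the crosscap number by at most one (attaching a band to a spanning surface along the band's core adds at most one to $b_1$, and one can arrange the band to be attached to the Möbius band for $K_{2,q}$ so that the result is a non-orientable spanning surface for $\pmwhd$ with $b_1 \le 2$). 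Hence $\gamma_3(\pmwhd)\le \gamma_3(K_{2,q})+1 \le 2$.

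The main obstacle I anticipate is the bookkeeping in either approach: in the first, verifying carefully that the half-twisted-band surface is embedded and has the correct boundary for \emph{all} values of $t$ (and both clasp signs), rather than just drawing one picture; in the second, making precise the claim that a non-oriented band move raises the crosscap number by at most one and that the band can be attached to the Möbius band for $K_{2,q}$ to produce an embedded non-orientable surface for $\pmwhd$ (one must be careful that the band does not force an extra crosscap by interacting with the existing Möbius band). Given the style of the surrounding exposition, I expect the intended proof is the short one via Figure~\ref{fig:whdto21cable} together with the observation that $(2,q)$-cables bound Möbius bands in $S^3$, and I would present it that way, with a sentence pinning down the ``$+1$ to crosscap number'' estimate.
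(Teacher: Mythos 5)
Your first construction is, in substance, the paper's proof: the paper simply takes the checkerboard surface of the standard clasp diagram of the pattern in the solid torus, observes that it is a punctured Klein bottle (so $b_{1}=2$), and notes that since the surface lives in the solid torus it survives the satellite operation for every companion, every $t$, and both clasp signs. One correction to how you phrase it: you cannot obtain this surface by \emph{isotoping} a band of the genus-one Seifert surface, since an ambient isotopy of a spanning surface preserves its homeomorphism type and in particular its orientability. What you actually want is a \emph{different} spanning surface with the same boundary --- the other checkerboard resolution of the clasp, in which one band carries a half-twist --- and once you say it that way your argument is exactly the paper's.

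Your alternative route through the band move to $K_{2,q}$ is genuinely different from the paper and, as written, has a real gap that you partly anticipate but do not close. The inequality $\gamma_{3}(K)\leq\gamma_{3}(J)+1$ for knots related by a non-oriented band move is not automatic the way the $\gamma_{4}$ version (Proposition~\ref{BMbound}) is: in $S^{3}\times[0,1]$ one has an extra dimension to push the band off the interior of the spanning surface, but in $S^{3}$ the band may intersect the interior of the M\"obius band for $K_{2,q}$, and resolving those intersections can raise $b_{1}$. In this particular configuration one can check disjointness by hand (both the M\"obius band for the cable and the clasp band sit in the solid torus in a controlled way), but that verification is essentially equivalent to drawing the punctured Klein bottle directly, so the first approach is both shorter and safer. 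I would drop the second route or, if you keep it, add the explicit disjointness check rather than a general ``$+1$ to crosscap number'' claim.
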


\begin{proof}
    We prove for the positive Whitehead double, however an identical method will work for the negative. Begin with the Whitehead pattern drawn in a solid torus. We then consider the genus of the pattern in the solid torus, which we define to be the minimum genus of a surface in the solid torus bounded by $P$, the pattern. The checkerboard coloring suffices to provide a punctured Klein bottle bounded by the untwisted positive Whitehead double, see Figure \ref{fig:thewhdsurfaces}. This gives us the desired upper bound for the crosscap number. 
\end{proof}

\begin{figure}[h]
\begin{center}
  \begin{subfigure}{0.25\textwidth}
    \includegraphics[width=\linewidth]{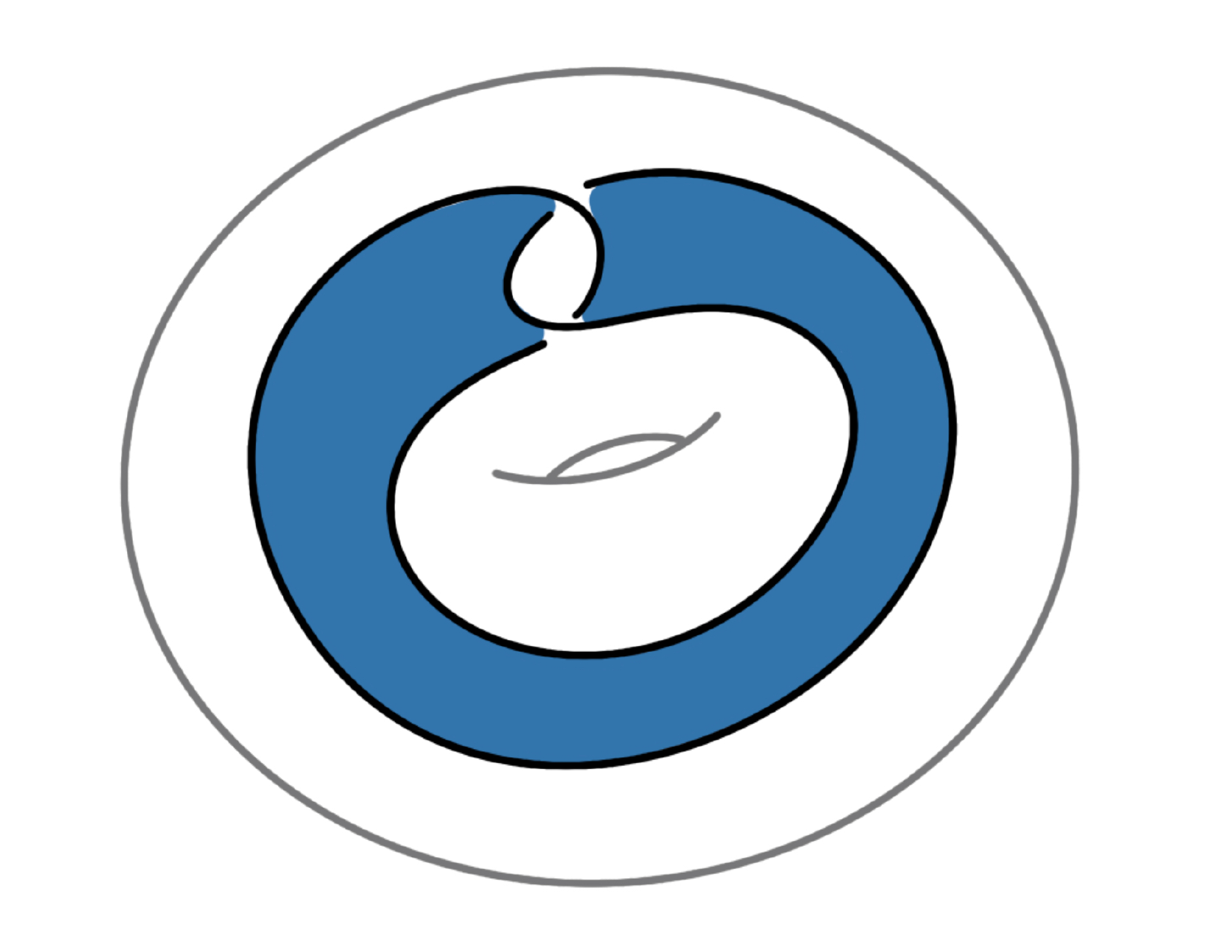}
    \caption{Untwisted Pattern} 
  \end{subfigure}%
  \hspace{3mm} 
  \begin{subfigure}{0.25\textwidth}
    \includegraphics[width=\linewidth]{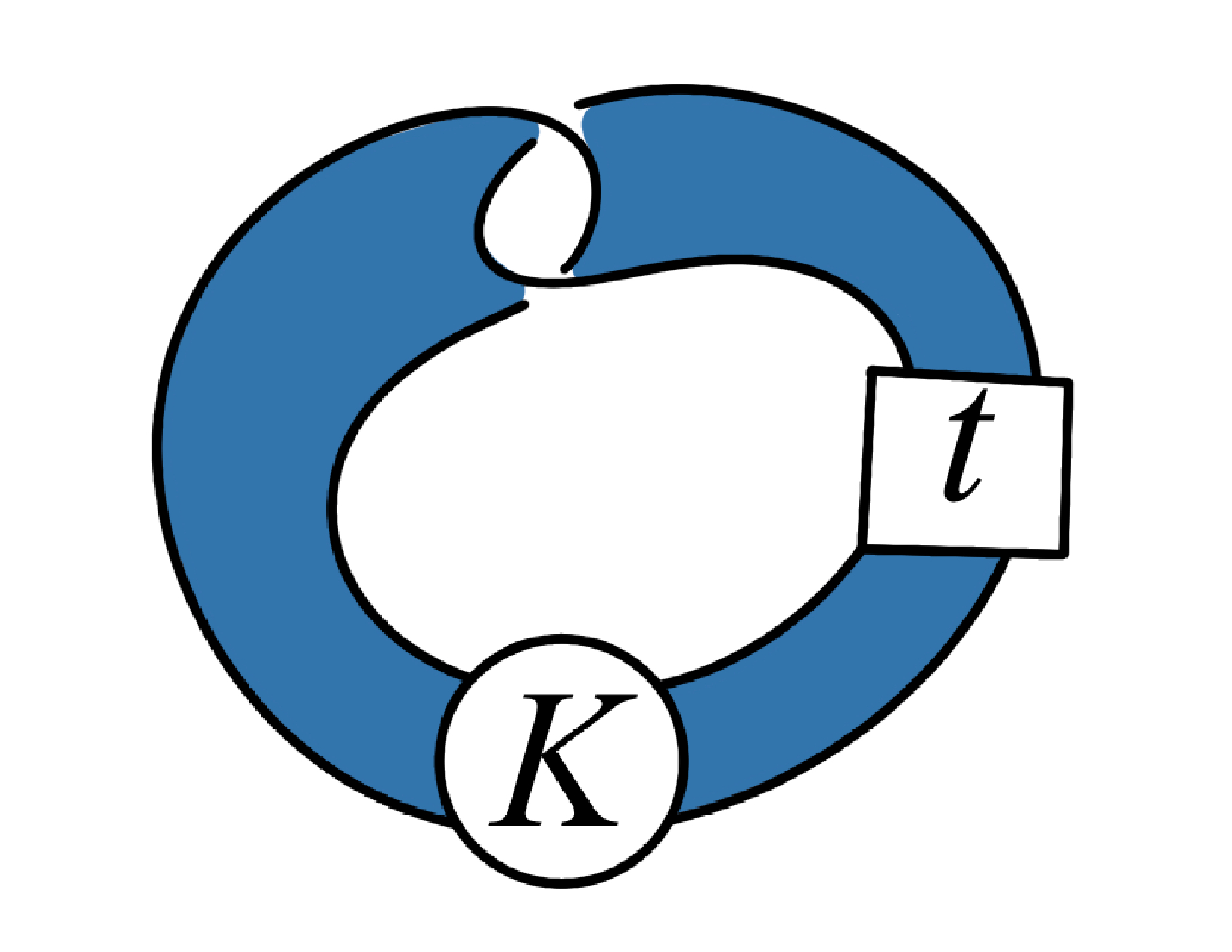}
    \caption{$\whd$} 
  \end{subfigure}
  \end{center}

\caption{Whitehead Double Bounding a Punctured Klein Bottle} \label{fig:thewhdsurfaces}
\end{figure}

We also wish to predict which knots $K$ have $\gamma_{4}(\whd) = 2$. From the signature and upsilon invariant, Ozv\'ath, Stipsicz, and Szab\'o \cite{OSSnonorient} provide the following lower bound on non-orientable 4 genus.

\begin{thm}[Theorem 1.2 in \cite{OSSnonorient}]\label{UpsSigBound}
  For a knot $K$ in $S^{3}$, 
    $$\left| \upsilon(K) + \frac{\sigma(K)}{2} \right| \leq \gamma_{4} (K)$$

\end{thm}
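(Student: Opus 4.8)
The statement is Ozsváth–Stipsicz–Szabó's Theorem 1.2, and the plan is to reproduce the shape of their argument: fix a smoothly embedded non-orientable surface $F \subset B^{4}$ with $\partial F = K$ realizing the non-orientable $4$-genus, $b_{1}(F) = \gamma_{4}(K)$, and bound $\upsilon(K)$ and $\sigma(K)$ \emph{separately} in terms of the normal Euler number $e(F)$, in such a way that the $e(F)$-terms cancel when the two estimates are combined. It is worth noting that the easy orientable bounds $|\upsilon(K)| \le g_{4}(K)$ and $|\sigma(K)|/2 \le g_{4}(K)$ only yield $|\upsilon(K)+\sigma(K)/2| \le 2g_{4}(K)$, so one genuinely has to work with the non-orientable surface and exploit the cancellation.

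First I would feed $F$ into the non-orientable $\upsilon$-inequality of \cite{OSSnonorient} — the Theorem 1.1 already recalled in Section~\ref{section:NOcobords}, applied with the unknot at the other end so that the extra $\upsilon$-term vanishes. This gives
\[
\left| \upsilon(K) + \frac{e(F)}{4} \right| \le \frac{b_{1}(F)}{2},
\]
equivalently $\bigl|2\upsilon(K) + e(F)/2\bigr| \le b_{1}(F)$.

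Second I would invoke the classical signature estimate for non-orientable spanning surfaces in the $4$-ball. Passing to the double branched cover $\Sigma_{2}(B^{4},F)$ — a $4$-manifold with boundary $\Sigma_{2}(S^{3},K)$, with $b_{2}\bigl(\Sigma_{2}(B^{4},F)\bigr) = b_{1}(F)$ and with signature defect against $\sigma(K)$ governed by $e(F)$ via the $G$-signature theorem (this Gordon–Litherland-type fact is classical and is also recorded in \cite{OSSnonorient}) — one obtains an inequality of the form
\[
\left| \sigma(K) - \frac{e(F)}{2} \right| \le b_{1}(F).
\]
Combining the two displayed inequalities so as to cancel $e(F)$ (with the conventions of \cite{OSSnonorient} this amounts to adding them) yields $\bigl|2\upsilon(K) + \sigma(K)\bigr| \le 2\,b_{1}(F)$, that is,
\[
\left| \upsilon(K) + \frac{\sigma(K)}{2} \right| \le b_{1}(F) = \gamma_{4}(K),
\]
since $F$ was chosen to minimize $b_{1}$.

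The main obstacle is the first ingredient: Theorem 1.1 of \cite{OSSnonorient} is the genuinely deep, Heegaard–Floer-theoretic input, controlling the jump of $\Upsilon$ across a non-orientable cobordism by its normal Euler number, and I would cite it rather than reprove it. The remaining delicate points are bookkeeping rather than substance: checking that the same invariant $b_{1}(F)$ (equivalently the crosscap count, equal to $\gamma_{4}(K)$ at the minimizer) appears in \emph{both} inequalities, that $e(F)$ is normalized identically (Seifert-framing push-off) in the Floer-theoretic and the branched-cover estimates, and that the signs line up so the two inequalities combine to eliminate $e(F)$ rather than reinforce it. Once these conventions are pinned down, the deduction is exactly the short combination above.
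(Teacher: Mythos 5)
The paper itself offers no proof of this statement — it is quoted directly as Theorem 1.2 of \cite{OSSnonorient} — so there is no internal argument to compare yours against. Your outline faithfully reproduces how Ozsv\'ath–Stipsicz–Szab\'o derive it in the cited source: apply their cobordism inequality (Theorem 1.1) to a minimizing surface $F \subset B^{4}$ to get $\left|\upsilon(K) + e(F)/4\right| \le b_{1}(F)/2$, combine with the Gordon–Litherland/branched-cover signature bound $\left|\sigma(K) - e(F)/2\right| \le b_{1}(F)$, and let the triangle inequality cancel the $e(F)$ terms. The one honest caveat, which you flag yourself, is that the deep input is the unoriented-Floer cobordism inequality, which you cite rather than prove; given that the statement under discussion is itself an imported theorem, that is the right level of detail, and the sign/normalization bookkeeping you identify is the only thing left to pin down.
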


As $\sigma (\text{Wh}^{+}(K)) = 0$, the lower bound coming from Theorem \ref{UpsSigBound} depends only on $\tau (K)$. However, as $\upsilon(\pmwhd) \in \{0, \pm1\}$ for all $t$ any knot $K$, we see that the Upsilon invariant does not offer us an obstruction to Whitehead doubles bounding M\"obius bands.

\nocite{*}
\bibliography{main}

\begin{thebibliography}{10}

\bibitem{annulustwist}
Tetsuya Abe, In~Dae Jong, Yuka Omae, and Masanori Takeuchi.
\newblock Annulus twist and diffeomorphic 4-manifolds.
\newblock {\em Math. Proc. Cambridge Philos. Soc.}, 155(2):219--235, 2013.

\bibitem{akbulut1}
Selman Akbulut.
\newblock On {$2$}-dimensional homology classes of {$4$}-manifolds.
\newblock {\em Math. Proc. Cambridge Philos. Soc.}, 82(1):99--106, 1977.

\bibitem{akbulutbook}
Selman Akbulut.
\newblock {\em 4-manifolds}, volume~25 of {\em Oxford Graduate Texts in Mathematics}.
\newblock Oxford University Press, Oxford, 2016.

\bibitem{Allen}
Samantha Allen.
\newblock Nonorientable surfaces bounded by knots: a geography problem.
\newblock {\em New York J. Math.}, 29:1038--1059, 2023.

\bibitem{cassongordon}
A.~J. Casson and C.~McA. Gordon.
\newblock Cobordism of classical knots.
\newblock In {\em \`A{} la recherche de la topologie perdue}, volume~62 of {\em Progr. Math.}, pages 181--199. Birkh\"auser Boston, Boston, MA, 1986.
\newblock With an appendix by P. M. Gilmer.

\bibitem{upsCables}
Wenzhao Chen.
\newblock On the {U}psilon invariant of cable knots.
\newblock {\em Algebr. Geom. Topol.}, 21(3):1075--1092, 2021.

\bibitem{Conwayalg}
Anthony Conway.
\newblock Algebraic concordance and casson-gordon invariants.
\newblock 2018.

\bibitem{fig8cable}
Irving Dai, Sungkyung Kang, Abhishek Mallick, JungHwan Park, and Matthew Stoffregen.
\newblock The $(2,1)$-cable of the figure-eight knot is not smoothly slice, 2022.

\bibitem{satelliteUpsilon}
Peter Feller, JungHwan Park, and Arunima Ray.
\newblock On the {U}psilon invariant and satellite knots.
\newblock {\em Math. Z.}, 292(3-4):1431--1452, 2019.

\bibitem{FoxMilnor}
Ralph~H. Fox and John~W. Milnor.
\newblock Singularities of {$2$}-spheres in {$4$}-space and cobordism of knots.
\newblock {\em Osaka Math. J.}, 3:257--267, 1966.

\bibitem{topSlice}
Michael~H. Freedman and Frank Quinn.
\newblock {\em Topology of 4-manifolds}, volume~39 of {\em Princeton Mathematical Series}.
\newblock Princeton University Press, Princeton, NJ, 1990.

\bibitem{gompfstipsicz}
Robert~E. Gompf and Andr\'as~I. Stipsicz.
\newblock {\em {$4$}-manifolds and {K}irby calculus}, volume~20 of {\em Graduate Studies in Mathematics}.
\newblock American Mathematical Society, Providence, RI, 1999.

\bibitem{HeddenWHD}
Matthew Hedden.
\newblock Knot {F}loer homology of {W}hitehead doubles.
\newblock {\em Geom. Topol.}, 11:2277--2338, 2007.

\bibitem{instantons}
Matthew Hedden and Paul Kirk.
\newblock Instantons, concordance, and {W}hitehead doubling.
\newblock {\em J. Differential Geom.}, 91(2):281--319, 2012.

\bibitem{epsilon}
Jennifer Hom.
\newblock Bordered {H}eegaard {F}loer homology and the tau-invariant of cable knots.
\newblock {\em J. Topol.}, 7(2):287--326, 2014.

\bibitem{hom}
Jennifer Hom.
\newblock A note on the concordance invariants epsilon and upsilon.
\newblock {\em Proc. Amer. Math. Soc.}, 144(2):897--902, 2016.

\bibitem{homsurvey}
Jennifer Hom.
\newblock A survey on {H}eegaard {F}loer homology and concordance.
\newblock {\em J. Knot Theory Ramifications}, 26(2):1740015, 24, 2017.

\bibitem{JK}
Stanislav Jabuka and Tynan Kelly.
\newblock The nonorientable 4-genus for knots with 8 or 9 crossings.
\newblock {\em Algebr. Geom. Topol.}, 18(3):1823--1856, 2018.

\bibitem{Kirby1995ProblemsIL}
Edited~Rob Kirby.
\newblock Problems in low-dimensional topology.
\newblock 1995.

\bibitem{KirbyMelvin}
Robion Kirby and Paul Melvin.
\newblock Slice knots and property {${\rm R}$}.
\newblock {\em Invent. Math.}, 45(1):57--59, 1978.

\bibitem{milnorconj}
P.~B. Kronheimer and T.~S. Mrowka.
\newblock Gauge theory for embedded surfaces. {I}.
\newblock {\em Topology}, 32(4):773--826, 1993.

\bibitem{levine1}
J.~Levine.
\newblock Invariants of knot cobordism.
\newblock {\em Invent. Math.}, 8:98--110; addendum, ibid. 8 (1969), 355, 1969.

\bibitem{levine2}
J.~Levine.
\newblock Knot cobordism groups in codimension two.
\newblock {\em Comment. Math. Helv.}, 44:229--244, 1969.

\bibitem{metabolicFormsLevine}
J.~P. Levine.
\newblock Metabolic and hyperbolic forms from knot theory.
\newblock {\em J. Pure Appl. Algebra}, 58(3):251--260, 1989.

\bibitem{livingstonKnotconc}
Charles Livingston.
\newblock A survey of classical knot concordance.
\newblock In {\em Handbook of knot theory}, pages 319--347. Elsevier B. V., Amsterdam, 2005.

\bibitem{knotinfo}
Charles Livingston and Allison~H. Moore.
\newblock Knotinfo: Table of knot invariants.
\newblock URL: \url{knotinfo.math.indiana.edu}, March 2024.

\bibitem{milpic}
Allison~N. Miller and Lisa Piccirillo.
\newblock Knot traces and concordance.
\newblock {\em J. Topol.}, 11(1):201--220, 2018.

\bibitem{MY}
Hitoshi Murakami and Akira Yasuhara.
\newblock Four-genus and four-dimensional clasp number of a knot.
\newblock {\em Proc. Amer. Math. Soc.}, 128(12):3693--3699, 2000.

\bibitem{signature}
Kunio Murasugi.
\newblock On a certain numerical invariant of link types.
\newblock {\em Trans. Amer. Math. Soc.}, 117:387--422, 1965.

\bibitem{murasugiARF}
Kunio Murasugi.
\newblock The {A}rf invariant for knot types.
\newblock {\em Proc. Amer. Math. Soc.}, 21:69--72, 1969.

\bibitem{OS1}
P.~Ozsv{\'a}th, A.~Stipsicz, and Z~Szab{\'o}.
\newblock Concordance homomorphisms from knot floer homology.
\newblock {\em Advances in Mathematics}, 315:366--426, 2017.

\bibitem{hfkBackground}
P.~Ozsv\'{a}th and Z.~Szab\'{o}.
\newblock An overview of knot {F}loer homology.
\newblock In {\em Modern geometry: a celebration of the work of {S}imon {D}onaldson}, volume~99 of {\em Proc. Sympos. Pure Math.}, pages 213--249. Amer. Math. Soc., Providence, RI, 2018.

\bibitem{hfkandfourballgenus}
Peter Ozsv\'ath and Zolt\'an Szab\'o.
\newblock Knot {F}loer homology and the four-ball genus.
\newblock {\em Geom. Topol.}, 7:615--639, 2003.

\bibitem{OSSnonorient}
Peter~S. Ozsv\'{a}th, Andr\'{a}s~I. Stipsicz, and Zolt\'{a}n Szab\'{o}.
\newblock Unoriented knot {F}loer homology and the unoriented four-ball genus.
\newblock {\em Int. Math. Res. Not. IMRN}, (17):5137--5181, 2017.

\bibitem{ossrational}
Peter~S. Ozsv\'ath and Zolt\'an Szab\'o.
\newblock Knot {F}loer homology and rational surgeries.
\newblock {\em Algebr. Geom. Topol.}, 11(1):1--68, 2011.

\bibitem{epsilonSatellites}
Jay Patwardhan and Zheheng Xiao.
\newblock Generalized mazur patterns and immersed heegaard floer homology, 2024.

\bibitem{piccirillo}
Lisa Piccirillo.
\newblock Shake genus and slice genus.
\newblock {\em Geom. Topol.}, 23(5):2665--2684, 2019.

\bibitem{conwayknot}
Lisa Piccirillo.
\newblock The {C}onway knot is not slice.
\newblock {\em Ann. of Math. (2)}, 191(2):581--591, 2020.

\bibitem{rasmSinv}
Jacob Rasmussen.
\newblock Khovanov homology and the slice genus.
\newblock {\em Invent. Math.}, 182(2):419--447, 2010.

\bibitem{rasmussen}
Jacob~Andrew Rasmussen.
\newblock {\em Floer homology and knot complements}.
\newblock ProQuest LLC, Ann Arbor, MI, 2003.
\newblock Thesis (Ph.D.)--Harvard University.

\bibitem{rolfsen}
Dale Rolfsen.
\newblock {\em Knots and links}, volume No. 7 of {\em Mathematics Lecture Series}.
\newblock Publish or Perish, Inc., Berkeley, CA, 1976.

\bibitem{LTsignature}
A.~G. Tristram.
\newblock Some cobordism invariants for links.
\newblock {\em Proc. Cambridge Philos. Soc.}, 66:251--264, 1969.

\bibitem{yasuhara}
A.~Yasuhara.
\newblock Connecting lemmas and representing homology classes of simply connected 4-manifolds.
\newblock {\em Tokyo J. Math.}, 19(01), 1996.

\end{thebibliography}

\bibliographystyle{plain}

\end{document}